\newtheorem{thm}{Theorem}[section]
\newtheorem{lem}[thm]{Lemma}
\newtheorem{prop}[thm]{Proposition}
\theoremstyle{definition}
\newtheorem{defn}[thm]{Definition}
\theoremstyle{remark}
\newtheorem{rem}[thm]{Remark}
\theoremstyle{remark}
\newtheorem{exam}[thm]{Example}
\numberwithin{equation}{section}
\newcommand{\set}[1]{\left\{#1\right\}}
\newcommand{\lquot}[2]{#1\text{\textbackslash}\hspace{.1em}#2}
\newcommand{\freccia}{\leftrightarrow}
\newcommand{\R}{\mathbb R}
\newcommand{\HP}{\mathbb H}
\newcommand{\GG}{\mathbb G}
\newcommand{\N}{\mathbb N}
\newcommand{\Z}{\mathbb Z}
\newcommand{\Q}{\mathbb Q}
\newcommand{\WW}{\mathbb W}
\newcommand{\CCC}{{\mathcal C}}
\newcommand{\FFF}{{\mathcal F}}
\newcommand{\GGG}{{\mathcal G}}
\newcommand{\III}{{\mathcal I}}
\newcommand{\JJJ}{{\mathcal J}}
\newcommand{\MMM}{{\mathcal M}}
\newcommand{\OOO}{{\mathcal O}}
\newcommand{\PPP}{{\mathcal P}}
\newcommand{\TTT}{{\mathcal T}}
\newcommand{\WWW}{{\mathcal W}}
\newcommand{\DDD}[1]{\operatorname{def}\text{$\left(#1\right)$}}
\newcommand{\eps}{\varepsilon}
\newcommand{\virg}[1]{``#1"}
\newcommand{\geod}{\mathfrak{g}}
\DeclareMathOperator{\Real}{Re}
\DeclareMathOperator{\per}{per}
\definecolor{farey}{rgb}{0.3,0.3,0.3}
\begin{document}

\title{A Poincar\'e map for the horocycle flow on
  $\lquot{PSL(2,\Z)}{\HP}$ and the Stern-Brocot tree}%
\author{Claudio Bonanno}%
\address{Dipartimento di Matematica, Universit\`a di Pisa, Largo Bruno
  Pontecorvo 5, 56127 Pisa, Italy} \email{claudio.bonanno@unipi.it}

\author{Alessio Del Vigna}%
\address{Dipartimento di Matematica, Universit\`a di Pisa, Largo Bruno
  Pontecorvo 5, 56127 Pisa, Italy} \email{delvigna@mail.dm.unipi.it}

\author{Stefano Isola}%
\address{Scuola di Scienze e Tecnologie, Universit\`a degli Studi di Camerino,
via Madonna delle Carceri, 62032 Camerino, Italy} \email{stefano.isola@unicam.it}

\begin{abstract}
We construct a Poincar\'e map $\PPP_h$ for the positive horocycle flow on the modular surface $\lquot{PSL(2,\Z)}{\HP}$, and begin a systematic study of its dynamical properties. In particular we give a complete characterisation of the periodic orbits of $\PPP_h$, and show that they are equidistributed with respect to the invariant measure of $\PPP_h$ and that they can be organised in a tree by using the Stern-Brocot tree of rational numbers. In addition we introduce a time-reparameterisation of $\PPP_h$ which gives an insight into the dynamics of the non-periodic orbits. This paper constitutes a first step in the study of the dynamical properties of the horocycle flow by purely dynamical methods.
\end{abstract}

\subjclass[2020]{37D40, 37A40, 37C25}
\keywords{Modular surface, horocycle flow, Poincar\'e map, closed horocycles, Stern-Brocot tree}
\thanks{The authors are partially supported by the research
  project PRIN 2017S35EHN ``Regular and stochastic
  behaviour in dynamical systems'' of the Italian Ministry of
  Education and Research. This research is part of the authors' activity within the
 the UMI Group ``DinAmicI'' \texttt{www.dinamici.org} and the INdAM (Istituto Nazionale di Alta Matematica) group GNFM. The authors are grateful to Giovanni Forni, Andreas Knauf and Jens Marklof for useful discussions and suggestions. }

\maketitle



\section{Introduction} \label{sec:intro}

Hyperbolic geometry has been a crucial ground for the development of important ideas of ergodic theory since its inception. Geodesic flows on surfaces of constant negative curvature were the first non-trivial examples of ergodic flows. In addition, several geometric coding techniques have been developed for such flows, stemming from a 1898 work of J. Hadamard, later developed by M. Morse and G. Hedlund in the 1920s and 30s. Subsequently, interest also grew in the study of positive and negative horocycle flows, which move respectively along the stable and unstable manifolds for the geodesic flow. More concretely, these types of flow move tangent vectors sideways along the horocycle whose center is in the direction of the tangent vector.

The geodesic flow is also one of the first systems for which coding was used to study its dynamical properties. It was E. Artin in the 1920s who noticed that by coding a geodesic on the hyperbolic Poincar\'e half-plane $\HP$ by means of the continued fraction expansions of its end points, it was possible to deduce the existence of periodic and everywhere dense geodesic on the modular surface $\MMM:= \lquot{PSL(2,\Z)}{\HP}$. This observation was later developed in the 80s in papers by R. L. Adler and L. Flatto, and by C. Series (see \cite{adler-flatto,series}). Nowadays it is a classical result that the geodesic flow on the unit tangent bundle of the modular surface $S\MMM$ has a Poincar\'e section for which the associated Poincar\'e map has a factor map on one of the coordinates which is the Gauss map. This connection has been very useful in the study of the rich dynamics of the geodesic flow, and has brought to connections between different aspects of the two systems, such as S. G. Dani's correspondence (\cite{dani}) between Diophantine approximation properties and the behaviour of certain orbits of the geodesic flow on the modular surface, and the D. H. Mayer's correspondence (\cite{mayer}) between the Selberg Zeta function and the dynamical zeta function of the Gauss map.

Many dynamical properties have been studied also for the (positive/negative) horocycle flow on surfaces of constant negative curvature (see e.g. \cite{eins-book}). In particular it is known that the dynamics is less rich than that of the geodesic flow. In the case of the modular surface $\MMM$, the only invariant measures on $S\MMM$ are the hyperbolic volume form and the measures supported on the periodic orbits. Instead in the case of compact surfaces, the horocycle flow is uniquely ergodic and has no periodic orbits. Finer and quantitative ergodic properties are also known: the rigidity of the flow and its rate of mixing have been studied by M. Ratner (\cite{ratner1,ratner2}); in the case of the modular surface, the periodic orbits are known to be asymptotically equidistributed on $S\MMM$ (\cite{sarnak,hejhal,stromberg}); the Birkhoff averages of suitably regular observables have been proved to have a polynomial rate of convergence (\cite{flam-forni,ravotti}). However the beautiful results that we have recalled have been obtained by methods of harmonic analysis and representation theory. A notable exception is given by an equidistribution result for long periodic orbits in \cite{stromberg}, which is proved by ergodic methods but does not contain an estimate for the rate of convergence.

The main objective of this work is to construct a Poincar\'e map for the positive horocycle flow on the unit tangent bundle $S\MMM$, and begin a systematic study of its properties. We believe that this paper represents a first step in the study of the dynamical properties of the horocycle flow without using other structures of the modular surface.
First of all we work in the same framework used to approach the geodesic flow. We define a Poincar\'e map $\PPP_h$ for the positive horocycle flow using a cross-section with respect to which the geodesic flow has a Poincar\'e map whose factor map on the first coordinate is the extended Farey map, the slow version of the Gauss map (see Appendix \ref{app:geodesic}). We obtain a map $\PPP_h$ which is defined on a two-dimensional set $\WW$ homeomorphic to $\R^+\times \R^+$, whose action on the first coordinate is defined in terms of matrices in $SL(2,\Z)$, and preserves an infinite measure $\nu$ which is absolutely continuous with respect to the Lebesgue measure (the same holds for the Poincar\'e map of the geodesic flow). The map $\PPP_h$ is defined in Section \ref{sec:poinc-map}, in which we also give the explicit expression of the first return time function to the cross-section along the horocycle flow, thus constructing a suspension flow isomorphic to the horocycle flow on $S\MMM$. Let us stress that it is a suspension flow on a system preserving an infinite measure, thus its ergodic properties, such as mixing, need to be studied by the results of infinite ergodic theory.

In Section \ref{sec:periodic} we study the periodic points of $\PPP_h$. Without appealing to known results, we give a complete characterisation of the periodic points. We prove that there exists a one-parameter family of periodic points which are characterised by having the first coordinate in $\Q^+$ (Theorem \ref{thm-periodic}), and show that the periodic orbits are asymptotically equidistributed on $\WW$ with respect to the invariant measure $\nu$ (Theorem \ref{equid-perorb}). The behaviour of the periodic points may be studied also by looking at their rational coordinate. Using the Stern-Brocot tree, which is a binary tree containing all the rational numbers in $\Q^+$, we construct an algorithm to obtain all the points in a periodic orbit and to list them in the dynamical order (see Appendix \ref{app:algorithm}). It is immediate to realise that as the period of the orbit diverges, the rational coordinates of the points in the periodic orbit cover all the positive rational numbers.

Lastly, in Section \ref{sec:non-periodic} we study the behaviour of the non-periodic orbits. We show that there exists a time-reparameterization $\TTT_h$ of $\PPP_h$ which has factor map on the first coordinate isomorphic to the so-called backward continued fraction map, whereas the second coordinate of $\TTT_h$ is strictly increasing for non-periodic orbits. This can be interpreted as a proof of the existence of a subsequence of times along which the non-closed horocycles converge to the cusp of $\MMM$.

Let us remark that as far as we know the only other Poincar\'e maps for the horocycle flow on the modular surface which have been constructed are those in \cite{ath-che} (see also \cite{marklof-frob}) and in the recent \cite{marklof}. These maps use a cross-section defined in terms of the identification of $S\HP$ with the space of unimodular lattices. In Section \ref{sec:athreya} we discuss the relations with the construction in \cite{ath-che}.

Before getting into the construction of the Poincar\'e map $\PPP_h$ in Section \ref{sec:poinc-map}, we collect what the reader needs to know about the properties of the Stern-Brocot tree in Section \ref{sec:trees} and about the modular surface $\MMM$ in Section \ref{sec:facts}.

\section{Stern-Brocot trees of positive rational numbers} \label{sec:trees}

In this section we recall the Stern-Brocot tree and its permuted
versions, binary trees which contain all the positive rational numbers
in their reduced form exactly once. \emph{Farey sum} of two fractions, defined to be
\[
    \frac ac \oplus \frac bd := \frac{a+b}{c+d}.
\]
The Farey sum is used to generate all the positive fractions. Let
$1/0$ denote the ``fraction'' of infinity, and let
$F_{-1} = \set{0/1,\,1/0}$. Then for $k\geq 0$ we recursively define
the \emph{Stern-Brocot sets} $F_k$ as the set of fractions containing
$F_{k-1}$ and the Farey sum of two consecutive fractions in
$F_{k-1}$. In each set the fractions are arranged in increasing
order. Recall that the Farey sum of two fractions $a/c$ and $b/d$
satisfies $a/c < a/c \oplus b/d < b/d$. Thus we find
\[
F_0=  \set{ \frac 01,\frac 11, \frac 10},\quad F_1 = \set{\frac 01, \frac 12, \frac 11, \frac 21, \frac 10},\quad F_3 =\set{\frac 01, \frac 13, \frac 12, \frac 23, \frac 11, \frac 32, \frac 21, \frac 31, \frac 10},
\]
and so on. We call \emph{Farey pair} two consecutive fractions in a Stern-Brocot set. In the following we use that a Farey pair $p/q,p'/q'$ with $p/q<p'/q'$ satisfies $p'q-q'p=1$.

\subsection{The Stern-Brocot tree} \label{sec:sb-tree} The
Stern-Brocot sets are a strictly increasing sequence of sets which
eventually contain all positive rational numbers in reduced form. One
can then arrange the fractions in a binary tree, the Stern-Brocot tree
$\TTT$, according to the order they appear in the Stern-Brocot
sets. Figure~\ref{fig:SB} shows the first few levels of the
Stern-Brocot tree. The fractions $0/1$ and $1/0$ are the generators of
all the fractions, hence they constitute the first level of the tree,
$\TTT_{-1}$. Then the level $\TTT_0$ is given by $1/1$ the only
fraction in $F_0\setminus F_{-1}$. Recursively, the $k$-th level
$\TTT_k$ of the tree is given by the fractions in
$F_k\setminus F_{k-1}$. The fractions are arranged in increasing
order, and each is connected to a fraction of the previous level. It
is clear by definition of the Stern-Brocot sets that each fraction
$p/q$ in $F_k$ which is not in $F_{k-1}$ is the Farey sum of two
fractions $a/c,b/d$ in $F_{k-1}$, of which exactly one, say $a/c$, is
in the level $\TTT_{k-1}$ of the Stern-Brocot tree. In this case we
say that $a/c$ is a \emph{parent} of $p/q$, which in turn is the
\emph{daughter} of $a/c$. In addition, all fractions in a level
$\TTT_k$, excluding $0/1$ and $1/0$, have two daughters in
$\TTT_{k+1}$. This induces a natural notion of \emph{sisters
  fractions}, two fractions which have a parent in common.

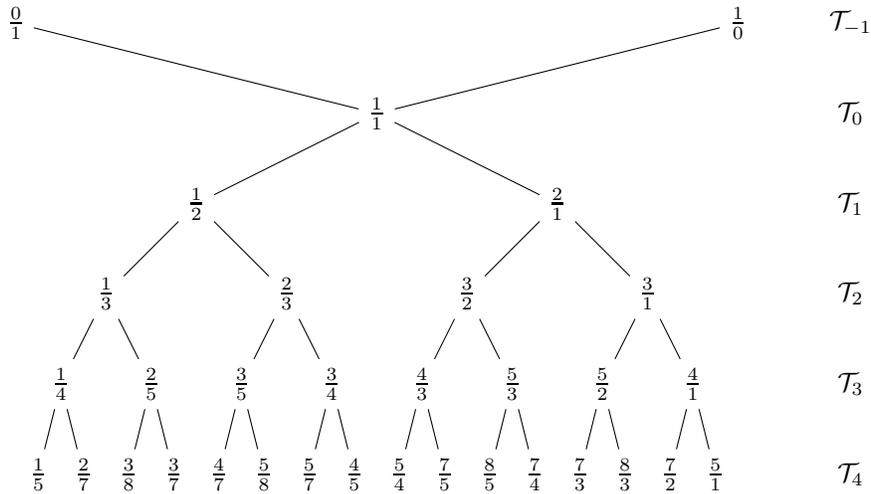
\begin{figure}[ht]
    \begin{tikzpicture}[
    level 1/.style = {sibling distance=4cm},
    level 2/.style = {sibling distance=2cm},
    level 3/.style = {sibling distance=1cm},
    level 4/.style = {sibling distance=0.5cm},
    level distance          = 1cm,
    edge from parent/.style = {draw},
    scale=1.2
    ]

    \node at (-4,1) {$\frac 01$};
    \node at (4,1) {$\frac 10$};
    \draw[thin] (-3.8,0.95) -- (-0.2,0.05);
    \draw[thin] (3.8,0.95) -- (0.2,0.05);

    \foreach \n in {-1,0,1,2,3,4} {
      \pgfmathsetmacro\p{\n}
      \node at (5.25,-\n) {$\mathcal{T}_{\pgfmathprintnumber\p}$};
    }

    \node {$\frac 11$}
    child{
      node {$\frac 12$}
      child{
        node {$\frac 13$}
        child{
          node {$\frac 14$}
          	child{
          	node {$\frac 15$}
          	}
		child{
		node{$\frac 27$}
		}
        }
        child{
          node {$\frac 25$}
          	child{
          	node {$\frac 38$}
          	}
		child{
		node{$\frac 37$}
		}
        }
      }
      child{
        node {$\frac 23$}
        child{
          node {$\frac 35$}
          	child{
          	node {$\frac 47$}
          	}
		child{
		node{$\frac 58$}
		}
        }
        child{
          node {$\frac 34$}
          	child{
          	node {$\frac 57$}
          	}
		child{
		node{$\frac 45$}
		}
        }
      }
    }
    child{
      node {$\frac 21$}
      child{
        node {$\frac 32$}
        child{
          node {$\frac 43$}
          	child{
          	node {$\frac 54$}
          	}
		child{
		node{$\frac 75$}
		}
        }
        child{
          node {$\frac 53$}
          	child{
          	node {$\frac 85$}
          	}
		child{
		node{$\frac 74$}
		}
        }
      }
      child{
        node {$\frac 31$}
        child{
          node {$\frac 52$}
          	child{
          	node {$\frac 73$}
          	}
		child{
		node{$\frac 83$}
		}
        }
        child{
          node {$\frac 41$}
          	child{
          	node {$\frac 72$}
          	}
		child{
		node{$\frac 51$}
		}
        }
      }
    };
\end{tikzpicture}
    \caption{The first six levels of the Stern-Brocot tree.}\label{fig:SB}
\end{figure}

\noindent
The structure of binary tree of $\TTT$ makes it possible to code each
fraction $p/q$ by the path one has to follow on $\TTT$ from the root
$1/1$ to $p/q$ by using $L$ for left and $R$ for right to denote the
motion from a fraction to the left and to the right daughter
respectively. Analogously, using the matrices in $SL(2,\Z)$
\[
  I = \begin{pmatrix} 1& 0 \\ 0 & 1 \end{pmatrix}\, , \quad L
  = \begin{pmatrix} 1& 0 \\ 1 & 1 \end{pmatrix}\, , \quad R
  = \begin{pmatrix} 1& 1 \\ 0 & 1 \end{pmatrix}
\]
one can set
\[
  \frac 11 \freccia I \quad \text{and} \quad \frac pq \freccia
  M\left(\frac pq\right) \in \{L,R\}^*
\]
where a motion on the tree corresponds to right multiplication by $L$
if we move to the left and by $R$ if we move to the right. In this way
\[
  \frac pq \freccia M\left(\frac pq\right) =
  \begin{pmatrix} a& b \\ c & d \end{pmatrix} \in SL(2,\Z) \quad
  \text{and} \quad \frac pq = \frac ac \oplus \frac bd,
\]
so that $p= a+b$ and $q=c+d$.

\begin{exam}\label{esempio-58}
    We have
    \[
        \frac 11 \freccia M\left(\frac 11\right) = I\, , \quad \frac
        12 \freccia M\left(\frac 12\right) = L\, , \quad \frac 21
        \freccia M\left(\frac 21\right) = R\, ,
    \]
    and thus for example
    \[
        \frac 58 \freccia M\left(\frac 58\right) = LRLR
        = \begin{pmatrix} 2 & 3 \\ 3 & 5 \end{pmatrix} \quad
        \text{and} \quad \frac 58 = \frac 23 \oplus \frac 35\, .
    \]
\end{exam}

\noindent
This coding can be extended to all $x\in \R^+$ by using the continued fraction expansion of an irrational number and its slow convergents $\{A_k/B_k\}_k$, so that
\[
\R\setminus \Q \ni x \freccia M(x) \in \{L,R\}^{\N} \quad \text{and} \quad M(x) = \lim_{k\to \infty}\, M\left(\frac{A_k}{B_k}\right).
\]

\begin{rem}\label{rem-LR}
Note that using this coding
\[
x\in [0,1) \Leftrightarrow M(x) = L\cdots \quad \text{and} \quad x\in (1,\infty) \Leftrightarrow M(x) = R\cdots
\]
\end{rem}

\subsection{The permuted Stern-Brocot tree} \label{sec:sb-perm-tree}

\noindent
Starting from $\TTT$ and the $\{L,R\}$ coding of the fractions, the
permuted Stern-Brocot tree $\hat{\TTT}$ is obtained from the
Stern-Brocot tree $\TTT$ by moving each fraction to the position
reached by the motions of its coding on $\TTT$ as read from right to
left. Figure~\ref{fig:SB-perm} shows the first few levels of the the
permuted Stern-Brocot tree. For example, since
\[
\frac 23 \freccia M\left(\frac 23\right) = LR
\]
the fraction $2/3$ can be reached in $\hat{\TTT}$ by the motions $RL$, that is starting from $1/1$ we first go downward to the right and then downward to the left. This sequence of motions describes the new position of $2/3$ in $\hat{\TTT}$.

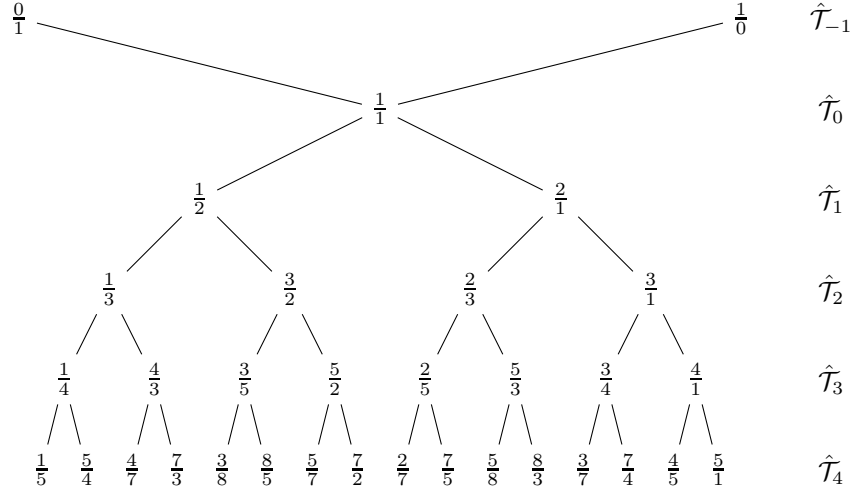
\begin{figure}[ht]
    \begin{tikzpicture}[
    level 1/.style = {sibling distance=4cm},
    level 2/.style = {sibling distance=2cm},
    level 3/.style = {sibling distance=1cm},
    level 4/.style = {sibling distance=0.5cm},
    level distance          = 1cm,
    edge from parent/.style = {draw},
    scale=1.2
    ]

    \node at (-4,1) {$\frac 01$};
    \node at (4,1) {$\frac 10$};
    \draw[thin] (-3.8,0.95) -- (-0.2,0.05);
    \draw[thin] (3.8,0.95) -- (0.2,0.05);

    \foreach \n in {-1,0,1,2,3,4} {
      \pgfmathsetmacro\p{\n}
      \node at (5,-\n) {$\hat{\mathcal{T}}_{\pgfmathprintnumber\p}$};
    }

    \node {$\frac 11$}
    child{
      node {$\frac 12$}
      child{
        node {$\frac 13$}
        child{
          node {$\frac 14$}
          	child{
          	node {$\frac 15$}
          	}
		child{
		node{$\frac 54$}
		}
        }
        child{
          node {$\frac 43$}
          	child{
          	node {$\frac 47$}
          	}
		child{
		node{$\frac 73$}
		}
        }
      }
      child{
        node {$\frac 32$}
        child{
          node {$\frac 35$}
          	child{
          	node {$\frac 38$}
          	}
		child{
		node{$\frac 85$}
		}
        }
        child{
          node {$\frac 52$}
          	child{
          	node {$\frac 57$}
          	}
		child{
		node{$\frac 72$}
		}
        }
      }
    }
    child{
      node {$\frac 21$}
      child{
        node {$\frac 23$}
        child{
          node {$\frac 25$}
          	child{
          	node {$\frac 27$}
          	}
		child{
		node{$\frac 75$}
		}
        }
        child{
          node {$\frac 53$}
          	child{
          	node {$\frac 58$}
          	}
		child{
		node{$\frac 83$}
		}
        }
      }
      child{
        node {$\frac 31$}
        child{
          node {$\frac 34$}
          	child{
          	node {$\frac 37$}
          	}
		child{
		node{$\frac 74$}
		}
        }
        child{
          node {$\frac 41$}
          	child{
          	node {$\frac 45$}
          	}
		child{
		node{$\frac 51$}
		}
        }
      }
    };
\end{tikzpicture}
    \caption{The first six levels of the permuted Stern-Brocot
      tree.}\label{fig:SB-perm}
\end{figure}

\noindent
Given a finite word $w=(w_1 w_2\cdots w_{n-1} w_n)$ let $\hat{w}$ denote its reverse, that is $\hat{w} = (w_n w_{n-1} \cdots w_2 w_1)$. Then $\hat{M}$ denotes the reverse sequence of motions of a rational number, that is $\hat{M}(p/q)$ is the reverse of the word $M(p/q)$, so that looking at the examples we have considered above we have
\[
\hat{M}\left(\frac 23\right) = RL \quad \text{and} \quad \hat{M}\left(\frac 58\right) = RLRL\, .
\]
The following lemma can be proved by using some results in \cite{involution}. For completeness we give a proof in Appendix \ref{app:proof}.

\begin{lem} \label{lem:matr-perm}
The matrices $M(p/q)$ and its reverse $\hat{M}(p/q)$ satisfy the following relation
\[
M\left( \frac pq \right) = \begin{pmatrix} a & b \\ c & d
\end{pmatrix} \quad \Leftrightarrow \quad \hat{M}\left( \frac pq \right) = \begin{pmatrix} d & b \\ c & a
\end{pmatrix}.
\]
\end{lem}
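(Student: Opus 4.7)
The plan is to reduce the claimed identity to a single matrix conjugation by exploiting the observation that the two generators are transposes of each other: $L^{\top}=R$ and $R^{\top}=L$. Thus transposing a word in $\{L,R\}$ reverses the order of the letters \emph{and} swaps $L\leftrightarrow R$. To recover a plain reversal of the word, I undo the swap by conjugating by the anti-diagonal involution $J=\begin{pmatrix}0&1\\1&0\end{pmatrix}$, which satisfies $J^2=I$.

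First I would check directly that $JLJ=R$ and $JRJ=L$, so that $JXJ=X^{\top}$ for both $X\in\{L,R\}$; equivalently $X=JX^{\top}J$. Writing $M(p/q)=X_1X_2\cdots X_n$ with $X_i\in\{L,R\}$, I substitute $X_i=JX_i^{\top}J$ into the reversed product and collapse the internal factors $J^2=I$:
\[
\hat M(p/q)=X_nX_{n-1}\cdots X_1=\bigl(JX_n^{\top}J\bigr)\bigl(JX_{n-1}^{\top}J\bigr)\cdots\bigl(JX_1^{\top}J\bigr)=J\bigl(X_n^{\top}X_{n-1}^{\top}\cdots X_1^{\top}\bigr)J.
\]
The inner bracket equals $(X_1X_2\cdots X_n)^{\top}=M(p/q)^{\top}$, so $\hat M(p/q)=J\,M(p/q)^{\top}\,J$.

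It then remains to compute this conjugation explicitly: for $M(p/q)=\begin{pmatrix}a&b\\c&d\end{pmatrix}$,
\[
J\begin{pmatrix}a&b\\c&d\end{pmatrix}^{\top}J=J\begin{pmatrix}a&c\\b&d\end{pmatrix}J=\begin{pmatrix}d&b\\c&a\end{pmatrix},
\]
which is exactly the right-hand side of the lemma.

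There is essentially no obstacle: the entire statement reduces to the neat algebraic fact that $L$ and $R$ are transposes of each other, so that reversing a word amounts to transposition followed by the involution $X\mapsto JXJ$. I would not need any of the results cited from \cite{involution}; this is a self-contained three-line computation once the identity $JXJ=X^{\top}$ for $X\in\{L,R\}$ is noted. (A quick sanity check on Example \ref{esempio-58} confirms the formula: $M(5/8)=\begin{pmatrix}2&3\\3&5\end{pmatrix}$ gives $\hat M(5/8)=\begin{pmatrix}5&3\\3&2\end{pmatrix}$, matching the direct computation of $RLRL$.)
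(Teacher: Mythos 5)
Your proof is correct, and it takes a genuinely different and slicker route than the paper's. The paper argues by induction on the number of blocks in a decomposition $M(p/q)=L^{n_1}R^{m_1}\cdots L^{n_s}R^{m_s}$, with a separate case analysis according to whether the word begins and ends with $L$ or with $R$, and explicit computation of the matrix entries at each step (about a page of calculation in Appendix \ref{app:proof}). You instead observe that $L^{\top}=R$ and $R^{\top}=L$, so that with $J=\begin{pmatrix}0&1\\1&0\end{pmatrix}$ one has $JXJ=X^{\top}$ for $X\in\{L,R\}$, and telescoping $J^2=I$ through the reversed product gives the closed identity $\hat M(p/q)=J\,M(p/q)^{\top}J$, which is precisely the entry permutation $(a,b,c,d)\mapsto(d,b,c,a)$. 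I checked the key identities ($JLJ=R$, $JRJ=L$, and the final conjugation computation) and they are all right; your argument also handles uniformly the degenerate words (the empty word $I$ and pure powers $L^n$ or $R^n$), which in the paper's proof require separate remarks. What your approach buys is brevity, independence from the results of \cite{involution}, and a structural explanation of why the lemma holds (reversal $=$ transposition composed with conjugation by the flip); what the paper's inductive computation buys is only that it stays entirely within elementary matrix multiplication of the generators, at the cost of length. Either proof is acceptable; yours could replace the appendix essentially verbatim.
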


\noindent
It is interesting to notice that $\hat{\TTT}$ is strictly related to
the \emph{extended Farey map} $U:\R^+ \to \R^+$
\begin{equation} \label{ext-farey-map}
    U(x):=
    \begin{cases}
        \dfrac{x}{1-x} & \text{if}\ 0\le x<1\\[0.25cm]
        x-1           & \text{if}\ x \ge 1
    \end{cases}.
\end{equation}
Indeed, given $p/q$ its daughters in $\hat{\TTT}$ are $p/(p+q)$
and $(p+q)/q$ which correspond to the elements of
$U^{-1}(p/q)$. In Appendix \ref{app:geodesic} the map $U$ will pop up as the factor map on the first coordinate of the Poincar\'e map for the geodesic flow with respect to a section $\CCC$ (to be defined below) on the unit tangent $S\MMM$ of the modular surface. For more details about the Stern-Brocot trees and their relations with one-dimensional maps we refer to \cite{orderings}.


\section{Preliminaries from hyperbolic geometry} \label{sec:facts}

We shall consider the \emph{upper half-plane}
\[
    \HP \coloneqq \set{z=x+iy \,:\, x,y\in \R,\ y>0}
\]
with the hyperbolic metric $ds^2 = (dx^2+dy^2)/y^2$, so that
$\HP$ becomes a Riemannian manifold with constant negative curvature
$-1$. The boundary of the hyperbolic plane is
$\partial \HP = \R \cup \{ \infty\}$, where we have set
$\R \coloneqq \{x+i y \,:\, y=0\}$.
Given $\Gamma \in SL(2,\R)$ we let $\Gamma : \HP \to \HP$ act as a M\"obius transformation
\[
\Gamma = \begin{pmatrix}a&b\\c&d\end{pmatrix}\, ,\quad \Gamma(z) = \frac{az+b}{cz+d}.
\]
As the action of $\Gamma$ and $-\Gamma$ coincide, the group of orientation preserving isometries of $\HP$ is taken to be $PSL(2,\R)=SL(2,\R)/\{\pm I\}$.
An important quantity related to a matrix $\Gamma \in SL(2,\R)$ is its \emph{deformation factor} at a point $z$, which we denote by $\DDD{\Gamma(z)}$ and is given by
\begin{equation}\label{def-def}
\Gamma = \begin{pmatrix}a&b\\c&d\end{pmatrix}\, ,\quad \DDD{\Gamma(z)} := \frac{d}{dz} \Gamma(z) = (cz+d)^{-2}
\end{equation}
The deformation factor of a matrix is related to its lift as an action on the tangent bundle of $\HP$.


\noindent
Let us denote by $\zeta \in T_z\HP$ the tangent vectors to $\HP$ at the
point $z$, then to each tangent vector $\zeta$ we associate the angle
$\theta(\zeta) \in (-\pi, \pi]$, which is the angle between $\zeta$ and
the positive $y$-axis measured counterclockwise from the $y$-axis. The
unit tangent bundle $S\HP$ can be identified with
$PSL(2,\R)$, to the effect that for each $(z,\zeta) \in S\HP$
there exists a unique $\Gamma_{(z,\zeta)} \in PSL(2,\R)$ such that
\begin{equation} \label{rep-matrici-sh}
    z=\Gamma_{(z,\zeta)}(i)\quad\text{and}\quad \zeta=d_i(\Gamma_{(z,\zeta)})(i),
\end{equation}
where $i\in T_i\HP$ is the unit vector based at $i\in \HP$ with
$\theta(i)=0$. Note that using the lift $(\Gamma_{(z,\zeta)})_*$ of $\Gamma_{(z,\zeta)}$ to
$S\HP$, we can restate the above conditions as
$(z,\zeta) = (\Gamma_{(z,\zeta)})_* (i,i)$.

\subsection{Geodesic and horocycle flow on $S\HP$}

\noindent
As it is well known, the \emph{geodesics} of $\HP$ are the half lines
orthogonal to $\partial \HP$ and the half circles with center on
$\partial \HP$. Given $(z,\zeta)$ in the unit tangent bundle $S\HP$,
we denote by $\gamma_{z,\zeta}(t)$ the (unique) geodesic tangent to
$\zeta$ at $z$ and set
\[
    \gamma_{z,\zeta}^\pm \coloneqq \lim_{t\to \pm \infty}
    \gamma_{z,\zeta}(t) \in \partial \HP.
\]
Note that $\gamma_{z,\zeta}^+ =\infty$ if and only if
$\theta(\zeta)=0$, and $\gamma_{z,\zeta}^- =\infty$ if and only if
$\theta(\zeta)=\pi$. In these two cases the geodesic is a vertical
half line with the other limit point on $\R$. The \emph{geodesic flow}
is defined as the one-parameter group $g_t$ acting on the unit tangent
bundle $S\HP$ as
\[
    g_t:S\HP \to S\HP, \quad\quad g_t(z,\zeta) =
    (\gamma_{z,\zeta}(t),\gamma'_{z,\zeta}(t)),
\]
where $\gamma'_{z,\zeta}(t)$ is the unit tangent vector to the
geodesic at $ \gamma_{z,\zeta}(t)$. Thanks to the identification of
$S\HP$ with $PSL(2,\R)$, we can read
the geodesic flow on $S\HP$ as the action by right multiplication of
matrices in $PSL(2,\R)$. In particular, for $t\in \R$
\begin{equation} \label{geodesic-sl2r}
   \Gamma_{g_t(z,\zeta)} =\Gamma_{(z,\zeta)} \begin{pmatrix} e^{t/2} &
        0 \\ 0 & e^{-t/2}
    \end{pmatrix}.
\end{equation}
One defines the horocycle flow analogously. Let $W^+(z,\zeta)$
denote the \emph{positive horocycle} at $(z,\zeta)$. If
$\theta(\zeta) \not= 0$, the horocycle $W^+(z,\zeta)$ is the circle
tangent to $\R$ at $\gamma_{z,\zeta}^+$, passing at $z$ and orthogonal
to $\zeta$, with $\zeta$ pointing inward. In this case we denote by
$r^+(z,\zeta)$ its radius. If instead $\theta(\zeta) =0$, then $W^+(z,\zeta)$
is a horizontal line passing at $z$ with $\zeta$ pointing upward. The
\emph{(positive) horocycle flow} is defined as the one-parameter group $h_s^+$ on
$S\HP$ which moves vectors orthogonal to $W^+(z,\zeta)$ rightward on
$W^+(z,\zeta)$ at unit speed. In terms of $PSL(2,\R)$ matrices, for
all $s\in \R$
\begin{equation}\label{horocycle-sl2r}
    \Gamma_{h_s^+(z,\zeta)} = \Gamma_{(z,\zeta)} \begin{pmatrix} 1 & s \\ 0 &
        1
    \end{pmatrix}.
\end{equation}
Thanks to the reinterpretation of the flow action by right
multiplication of matrices, it is straightforward to verify the following
commutation rules:
\[
    g_t\circ g_s = g_s\circ g_t \quad\text{and}\quad g_t\circ h_s^+ =
    h_{se^{-t}}^+\circ g_t \quad , \quad  \forall \, t,s\in \R
\]

\subsection{The modular surface}

\noindent
We now consider the \emph{modular group} $PSL(2,\Z)$, that is the subgroup of $PSL(2,\R)$ generated by the two matrices
\[
    S\coloneqq \begin{pmatrix} 0&1\\-1&0\end{pmatrix}
    \quad\text{and}\quad
    R\coloneqq \begin{pmatrix} 1&1\\0&1\end{pmatrix}.
\]
which act on $\HP$ as
\[
    S(z) = -\frac{1}{z}\quad\text{and}\quad R(z)=z+1
\]
Since
$S^2 = (SR)^3 = I$ the modular group is not free.

\noindent
The \emph{modular surface} is then defined to be the quotient
$\MMM\coloneqq \lquot{PSL(2,\Z)}{\HP}$, with the quotient topology.
The standard fundamental domain $\FFF$ for $\MMM$ is the geodesic
polygon
\[
    \FFF = \set{z=x+iy \in \HP \,:\, |x|\le \frac 12,\ |z|\ge1}.
\]
%
The identification of $S\HP$ with $PSL(2,\R)$
implies that of $S\MMM$ with
$\lquot{PSL(2,\Z)}{PSL(2,\R)}$.
Let $\pi : \HP \to \MMM$ be the projection on the modular surface and
let $\pi_* : S\HP \to S\MMM$ be its lift to the unit tangent
bundles. We let $\tilde g_t :S\MMM \to S\MMM$
and $\tilde h_s^+ :S\MMM \to S\MMM$ be the projection of the geodesic
and horocycle flows on the modular surface, that is
\[
    \tilde g_t = \pi_*\circ g_t\circ \pi_*^{-1}\quad\text{and}\quad
    \tilde h_s^+ = \pi_*\circ h_s\circ \pi_*^{-1}.
\]
Motion on a geodesic or on a horocycle on $\MMM$ corresponds to the
motion on one of the equivalent geodesics or horocycles on $\HP$, up to
identification of equivalent points.


\section{The Poincar\'e map for the horocycle flow and its suspension} \label{sec:poinc-map}

\noindent
Given $n\in \Z$ let $\III_n$  denote the vertical line in $\HP$ given by
\[
  \III_n = \set{x+i y \in \HP\, :\, x=n}
\]
and let
\[
  \III^+ := \III_0 \cap \FFF = \set{z \in \HP\, :\, x=0,\ y\ge 1}
  \quad \text{and} \quad \III^- := \III_0 \setminus \III^+ = \set{z
    \in \HP\, :\, x=0,\ 0<y< 1}
\]
Our Poincar\'e section for the geodesic and the horocycle flows on $S\MMM$ will be the set
\[
    \CCC := \set{ (z,\zeta) \in S\MMM\, :\, z\in \III^+,\
      \theta(\zeta)\not= 0,\pi}
\]
Set moreover $C:=  \set{(z,\zeta) \in S\HP\, :\, \pi_*(z,\zeta) \in \CCC}$.
Since $S(i)=i$ and $S(\III^+\setminus \{ i \}) = \III^-$, whereas its lift $S_* : S\HP \to S\HP$ satisfies
\[
  \theta(S_*(z,\zeta)) = \theta(\zeta)+\pi\, \, \text{(mod
    $2\pi$)}\qquad \forall\, (z,\zeta) \in S\HP \, \, \text{with
    $z\in \III_0$,}
\]
for each $(z,\zeta) \in \CCC$ there exists $(z',\zeta') \in S\HP$ with $z'\in \III_0$ and $\theta(\zeta')\in (-\pi,0)$ such that $\pi_*(z',\zeta') = (z,\zeta)$. Hence in the following we identify $\CCC$ with the set of points on $\III_0$ with tangent unit vector $\zeta$ pointing towards the half-plane of points with strictly positive real part, in accordance with
\begin{equation}\label{not-poin-sec}
\CCC = \pi_* \Big(\set{(z,\zeta) \in S\HP\, :\, z\in \III_0\, ,\, \theta(\zeta) \in (-\pi,0)}\Big)\, ,
\end{equation}
Stated otherwise, when we write $(z,\zeta)\in \CCC$ we think of it as a couple with $z\in \III_0$ and $\theta(\zeta) \in (-\pi,0)$. Analogously we set
\begin{equation}\label{equiv-poin-sec}
C = \set{(z,\zeta) \in S\HP\, :\, \exists \, \Gamma \in PSL(2,\Z) \, \text{ such that }\, \Gamma(z)\in \III_0\, ,\, \theta(\Gamma_*(z,\zeta))\in (-\pi,0)}\, .
\end{equation}
Subsets of $C$ which play an important role in the rest of the paper are points with base on the lines $\III_n$ and on the half-circles
\[
    \JJJ_n := \set{x+i y\in \HP\, :\,
      \left(x-n-\frac{1}{2}\right)^2 + y^2 = \frac 14}\, .
\]
Note that
$\III_n = R^n(\III_0)$ and $\JJJ_n = R^n L(\III_0)$, where
\[
L\coloneqq \begin{pmatrix} 1&0\\1&1\end{pmatrix} \in PSL(2,\Z)
\]
acts on $\HP$ by $L(z) = z/(z+1)$.

\noindent
Note moreover that for each $n\in \Z$ the lines $\III_n$,
$\JJJ_n$ and $\III_{n+1}$ are the sides of the hyperbolic triangle
\[
    \Delta_n := \set{x+i y \in \HP\, :\, n\le x\le n+1,\
      \left(x-n-\frac{1}{2}\right)^2 + y^2 \ge \frac 14}
\]
with vertices in $n,\,n+1 \in \R$ and $\infty$ (each $\Delta_n$ contains three copies of the fundamental domain $\FFF$).

\vskip 0.1cm
\noindent
We now proceed with the construction of the Poincar\'e map on $\CCC$ for the horocycle flow
$\tilde h^+_s$ for negative times $s$ (the Poincar\'e map on $\CCC$ for the geodesic flow $ \tilde g_t$ is already known, but we remind its construction in Appendix
\ref{app:geodesic}).
Let's start by defining the set
\[
    \WW :=\set{(\gamma,r,\eps)\in \R^+ \times \R^+ \times
      \{-1,0,+1\}\, :\, r\ge \gamma,\ \text{$\eps=0$ if and only if
        $r=\gamma$}}
\]
which is homeomorphic to $\R^+ \times \R^+$. Now, for each
$(z,\zeta)\in \CCC$ the positive horocycle $W^+(z,\zeta)$ is tangent
to $\R$ at $\gamma^+_{z,\zeta}$ and intersects $\III_0$. Its
radius thus satisfies $r^+(z,\zeta)\ge \gamma^+_{z,\zeta}$, with strict inequality if and only if
$W^+(z,\zeta)$ intersects $\III_0$ in two different points. We can therefore code
$(z,\zeta)\in \CCC$ using its horocycle $W^+(z,\zeta)$ by a point in
$\WW$ by setting
\[
    \gamma:= \gamma_{z,\zeta}^+,\quad
    r:=r^+(z,\zeta),\quad\text{and}\quad \eps =
    \begin{cases}
        -1 & \text{if $\theta(\zeta) \in (-\pi/2, 0)$}\\
        0  & \text{if $\theta(\zeta) = -\pi/2$}\\
        $+1$ & \text{if $\theta(\zeta) \in (-\pi, -\pi/2)$}
    \end{cases}.
\]
The variable $\eps$ is equal to 0 if the horocycle $W^+(z,\zeta)$ is
tangent to $\III_0$ at $z$, whereas is equal to $1$ or to $-1$ if $z$ is
the point of intersection of $W^+(z,\zeta)$ with $\III_0$ of highest,
respectively lowest, variable $y$. We have thus defined a map
\begin{equation} \label{eq-map-W} \WWW : \CCC \to \WW\, ,\quad
    (z,\zeta) \mapsto \WWW(z,\zeta) = (\gamma_{z,\zeta}^+,\,
    r^+(z,\zeta),\, \eps).
\end{equation}
Given $(z,\zeta)\in \CCC$ let us consider the horocycle flow $\tilde h^+_s(z,\zeta)$ for $s\in (-\infty,0)$, and let $(z',\zeta')$ be its first return to $\CCC$. We can thereby view the Poincar\'e map of $\tilde h_s^+$ on $\CCC$ as a map
\[
\PPP_h : \WW \to \WW
\]
\[
    (\gamma,r,\eps) = \WWW(z,\zeta) \mapsto \PPP_h(\gamma,r,\eps) =
    (\gamma',r',\eps')= \WWW(z',\zeta')\, .
\]
It is clear that $(z',\zeta')$, viewed as a point of the set $C$ (with slight abuse of notation), depends on
the configuration of $W^+(z,\zeta)$ and on the value of $\eps$ (see
Figures \ref{fig-case+1} and \ref{fig-case-1} below). In particular
the first return to $C$ for $\tilde h_s^+$ with negative $s$ occurs on
one of the following sets: $\JJJ_{-1}$, $\III_{-1}$ or $\III_0$ if
$\eps=-1$; $\III_1$, $\III_2$ or $\JJJ_0$ if $\eps=0,+1$ (here we are
using that $\theta(\zeta')\not= -\pi$).
Then $W^+(z',\zeta')$ will be the
positive horocycle defined by $(z',\zeta')$ as a point in $\CCC$. If
$\Gamma\in PSL(2,\Z)$ sends $z'\in C$ to
$\III_0$, that is $\Gamma(z')\in \III_0$ is the representative of $z'$
in $\CCC$, then $W^+(z',\zeta')=\Gamma(W^+(z,\zeta))$.

\noindent
Our aim is now to describe the map $\PPP_h$ on the points of $\WW$,
also in relation with the $\{L,R\}$ coding of $\gamma \in \R^+$
introduced in Section \ref{sec:trees}. The following lemma is a key result in order to characterize the map $\PPP_h$ in dependence on the subset of $C $ on which the first return occurs.

\begin{lem}\label{lem:via-matrici}
    For $(z,\zeta)\in \CCC$, let $(\gamma,r,\eps) = \WWW(z,\zeta)$. If
    the first return to $C$ for $\tilde h_s^+$ with negative $s$ along
    $W^+(z,\zeta)$ occurs on $\Gamma^{-1}(\III_0)$ for some
    $\Gamma\in PSL(2,\Z)$, then
    $(\gamma',r',\eps')=\PPP_h(\gamma,r,\eps)$ satisfies
    \[
        \gamma' = \Gamma(\gamma)\quad \text{and} \quad r' =
        \DDD{\Gamma(\gamma)}\, r\, ,
    \]
    where $\DDD{\Gamma(\cdot)}$ is the deformation factor defined in
    \eqref{def-def}.
\end{lem}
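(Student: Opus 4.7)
The plan is to exploit the identification of $S\HP$ with $PSL(2,\R)$ and to read off the pair $(\gamma,r)$ directly from the matrix representing a point of $\CCC$, so that the action of $\PPP_h$ becomes a matrix computation.

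First I would establish a preparatory fact: if $\Gamma_{(z,\zeta)} = \left(\begin{smallmatrix} a & b \\ c & d \end{smallmatrix}\right)$ is the matrix representing $(z,\zeta)\in\CCC$ via \eqref{rep-matrici-sh}, then $\gamma = a/c$ and $r = 1/(2c^2)$. The value of $\gamma$ follows from \eqref{geodesic-sl2r} by letting $t\to+\infty$, since $\gamma_{z,\zeta}^+ = \Gamma_{(z,\zeta)}(\infty) = a/c$; the condition $\theta(\zeta)\in(-\pi,0)$ defining $\CCC$ guarantees $c\neq 0$. For $r$, observe that $W^+(z,\zeta)$ is the image under $\Gamma_{(z,\zeta)}$ of the horocycle $W^+(i,i) = \{\mathrm{Im}\,w = 1\}$, so $z = \Gamma_{(z,\zeta)}(i)$ lies on $W^+(z,\zeta)$; substituting into the identity $r = |z-\gamma|^2/(2\,\mathrm{Im}\,z)$ for a horocycle at $\gamma\in\R$, and using $ad-bc=1$, yields $r = 1/(2c^2)$ after a short algebraic simplification.

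Second, I would translate the return procedure into a matrix product. By \eqref{horocycle-sl2r} the point $(z'',\zeta'') = \tilde h_{s_0}^+(z,\zeta)$ with $s_0<0$ is represented by $\Gamma_{(z,\zeta)}\left(\begin{smallmatrix} 1 & s_0 \\ 0 & 1 \end{smallmatrix}\right)$; if this point has canonical representative in $\CCC$ given by $\Gamma_*(z'',\zeta'')$ as in \eqref{equiv-poin-sec}, then
\[
\Gamma_{(z',\zeta')} \;=\; \Gamma\,\Gamma_{(z,\zeta)}\,\begin{pmatrix} 1 & s_0 \\ 0 & 1 \end{pmatrix}.
\]
Right multiplication by a unipotent upper-triangular matrix leaves the first column unchanged, so the first column of $\Gamma_{(z',\zeta')}$ coincides with that of $\Gamma\,\Gamma_{(z,\zeta)}$. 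Writing $\Gamma = \left(\begin{smallmatrix} a' & b' \\ c' & d' \end{smallmatrix}\right)$, this first column has entries $a'a+b'c$ and $c'a+d'c$; feeding these into the preparatory identities and using $a/c = \gamma$, $r = 1/(2c^2)$, one obtains
\[
\gamma' = \frac{a'a+b'c}{c'a+d'c} = \frac{a'\gamma+b'}{c'\gamma+d'} = \Gamma(\gamma), \qquad r' = \frac{1}{2(c'a+d'c)^2} = \frac{r}{(c'\gamma+d')^2} = \DDD{\Gamma(\gamma)}\,r,
\]
where the last equality uses \eqref{def-def}.

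The only genuine computation is the preparatory identity $r = 1/(2c^2)$, which is a short manipulation of the horocycle equation once one exploits $\det\Gamma_{(z,\zeta)}=1$. Everything downstream is matrix bookkeeping, driven by the key observation that the horocyclic flow acts by right multiplication with a unipotent upper-triangular matrix and hence preserves the first column of the representing matrix; the return to $\CCC$ changes this first column only through the left action of $\Gamma$, which simultaneously dictates the M\"obius action on $\gamma$ and the rescaling of $r$ by the deformation factor.
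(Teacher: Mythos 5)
Your argument is correct, but it takes a different route from the paper's. The paper proves the lemma purely at the level of horocycles in $\HP$: the claim for $\gamma'$ is read off from $W^+(z',\zeta')=\Gamma(W^+(z,\zeta))$, and the radius transformation is verified directly on the generators (translations $R^{\pm1}$ leave $r$ unchanged; for $S$ one substitutes into the circle equation $(x-\gamma)^2+y^2-2ry=0$ and finds $r'=r/\gamma^2$), after which the general case follows from the cocycle identity $\DDD{\Gamma_1\Gamma_2(z)}=\DDD{\Gamma_1(\Gamma_2(z))}\,\DDD{\Gamma_2(z)}$. You instead lift everything to the $PSL(2,\R)$ picture: from $\Gamma_{(z,\zeta)}=\left(\begin{smallmatrix} a & b\\ c & d\end{smallmatrix}\right)$ you extract $\gamma=a/c$ and $r=1/(2c^2)$ (these identities are correct, and are in effect the content of the paper's later formula \eqref{map-sl2r-ww-inv}), observe that the horocycle flow acts by right multiplication by a unipotent matrix and hence fixes the first column, and that the return to $\CCC$ acts by left multiplication by $\Gamma$, so the new first column is that of $\Gamma\Gamma_{(z,\zeta)}$, from which $\gamma'=\Gamma(\gamma)$ and $r'=(c'\gamma+d')^{-2}r=\DDD{\Gamma(\gamma)}\,r$ follow in one computation. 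What your approach buys is uniformity: no case-checking on generators, no chain rule for the deformation factor, and as a by-product you essentially obtain the matrix identity $\Gamma_{(z',\zeta')}=\Gamma\,\Gamma_{(z,\zeta)}\left(\begin{smallmatrix}1 & s_h\\ 0 & 1\end{smallmatrix}\right)$, which is the paper's Proposition \ref{prop:return-time}, as well as the parametrisation \eqref{map-sl2r-ww-inv}. What the paper's proof buys is that it stays elementary and geometric, needing only the M\"obius action on circles and no identification of $\CCC$ with matrices. The only point to be careful about in your write-up (shared with the paper's formulation) is the choice of $\Gamma$: since $S(\III_0)=\III_0$, the matrix sending the return point to $\III_0$ is only determined up to composition with $S$, and one must take the representative for which $\Gamma_*$ of the return point actually lands in $\CCC$ (equivalently $\Gamma(\gamma)>0$), which is what your phrase ``canonical representative as in \eqref{equiv-poin-sec}'' implicitly does.
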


\begin{proof}
    The statement for $\gamma'$ simply follows from
    $W^+(z',\zeta')=\Gamma(W^+(z,\zeta))$ whenever the first return occurs
    on $\Gamma^{-1}(\III_0)$. Let us now see how the radius of
    the positive horocycle $W^+(z,\zeta)$ changes when when a matrix in $PSL(2,\Z)$ acts on it.  We first
    consider the action of the generators $R$ and $S$. If
    $\Gamma\in \set{R,R^{-1}}$ then the horocycle $\Gamma(W^+(z,\zeta))$ is
    obtained by just translating $W^+(z,\zeta)$ to the right or to the
    left. Therefore the radius does not change, and
    $\DDD{\Gamma(z)} = 1$ for all $z$. If, on the other hand, $\Gamma = S=S^{-1}$,
    one easily checks that
    $x+iy \in W^+(z,\zeta)$, a circle of radius $r$ and center at
    $\gamma + ir$, if and only if $(x-\gamma)^2+y^2-2ry=0$, and
    $S(x+iy) = X+iY$ satisfies $(X-\gamma')+Y^2-2r' Y=0$ with
    $\gamma'=-1/\gamma$ and $r'=r/\gamma^2$. Since
    $\DDD{S(\gamma)}= \gamma^{-2}$ the result is proven also in this
    case. The general case now follows by noting that
\[
\DDD{\Gamma_1\Gamma_2(z)}=\DDD{\Gamma_1(\Gamma_2(z))}\DDD{\Gamma_2(z)}
\]
for all $z$ and all $\Gamma_1,\Gamma_2\in PSL(2,\Z)$.
\end{proof}

The following tables describe the different cases that have to
be considered to define $\PPP_h$ on $\WW$, providing the following
information: the subset of $C$ on which the first return occurs; the
matrix $\Gamma$ as in Lemma \ref{lem:via-matrici}; the image
$(\gamma',r',\eps')= \PPP_h(\gamma,r,\eps)$ computed using the lemma
along with simple arguments for $\eps'$; the word $M(\gamma')\in \{L,R\}^*$
as a function of $M(\gamma)$ as defined in Section \ref{sec:trees},
using the notation $\sigma: \{L,R\}^* \to \{L,R\}^*$ for the shift
map.

\noindent
We start with the case $\eps=+1$, for which we refer to Figure
\ref{fig-case+1}. We need to distinguish between the cases $\gamma>1$
and $\gamma<1$. In the first case the first return occurs on $\III_1$,
but in the last case it may occur on $\III_1$ or $\JJJ_0$ depending on
the value of the radius $r$ of the horocycle. The case $\gamma=1$ is
special because the case $\theta(\zeta')=0$ is not admissible for
points in $\CCC$. The different possibilities are described in Table \ref{tab-eps+1}. We
remark that in some circumstances it is necessary to add the action of $S$ to
have $\theta(\zeta')$ in the interval $(-\pi,0)$. Moreover the action
on $M(\gamma)$ has been computed by concatenation of the letters
describing $\Gamma$ and $M(\gamma)$, using some relations among the
matrices such as $SR^{-1} = RL^{-1}$.

\begin{table}[h]
    \begin{tabular}{c|c|c|c|c}
        \textbf{Conditions on $\gamma$ and $r$} & \textbf{First return} & $\Gamma$ & $\PPP_h(\gamma,r,\eps)$ & $M(\gamma')$\\
        \specialrule{1pt}{1pt}{1pt}
        $\gamma >1$ & $\III_1$ & $R^{-1}$ & $\left(\gamma -1, r, +1\right)$ & $\sigma(M(\gamma))$ \\
        \hline
        $\gamma=1$ & $\III_2$ & $SR^{-2}$ & $\left(1, r, -1\right)$ & $M(\gamma)$ \\
        \hline
        $\gamma<1$, $r> 1-\gamma$ & $\III_1$ & $SR^{-1}$ & $\left(\frac{1}{1-\gamma}, \frac{r}{(1-\gamma)^2}, -1\right)$ & $R\sigma(M(\gamma))$ \\
        \hline
        $\gamma<1$, $r= 1-\gamma$ & $\III_1$ & $SR^{-1}$ & $\left(\frac{1}{1-\gamma}, \frac{1}{1-\gamma}, 0\right)$ & $R\sigma(M(\gamma))$ \\
        \hline
        $\gamma<1$, $r< 1-\gamma$ & $\JJJ_0$ & $L^{-1}$ & $\left(\frac{\gamma}{1-\gamma}, \frac{r}{(1-\gamma)^2}, +1\right)$ & $\sigma(M(\gamma))$ \\
    \end{tabular}
    \caption{Definition of the map $\PPP_h$ for
      $\eps=+1$.}\label{tab-eps+1}
\end{table}

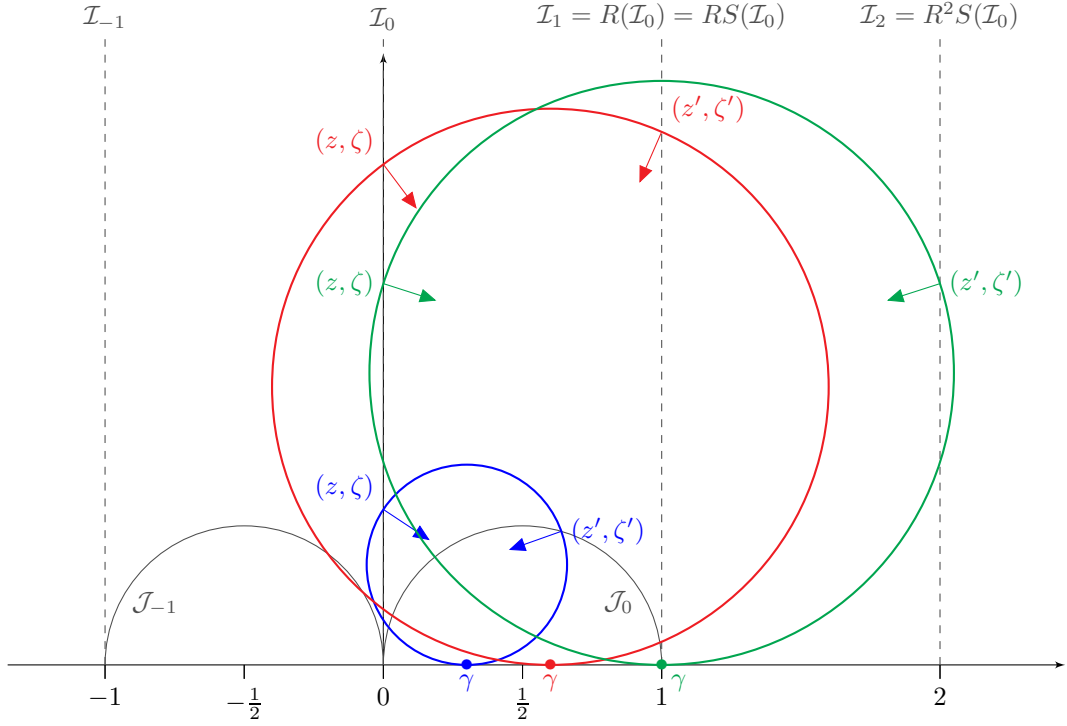
\begin{figure}[h!]
    \pgfmathsetmacro{\myxlow}{-1.25}
\pgfmathsetmacro{\myxhigh}{2.25}
\pgfmathsetmacro{\myiterations}{2}

\begin{tikzpicture}[scale=3.7]
    \draw[-latex',thin](\myxlow-0.1,0) -- (\myxhigh+0.2,0);
    \draw[-latex',thin,name path=I0](0,0) -- (0,2.2);

    \pgfmathsetmacro{\succofmyxlow}{\myxlow+1}
    \foreach \x in {-1,-0.5,0,0.5,1,2}
    {
      \draw (\x,0) -- (\x,-0.05);
    }
    \foreach \x in {-1,0,1,2}
    {
      \draw (\x,0) -- (\x,-0.05) node[below]{$\x$};
    }
    \draw (-0.5,0) -- (-0.5,-0.05) node[below]{$-\frac 12$};
    \draw (0.5,0) -- (0.5,-0.05) node[below]{$\frac 12$};

    \draw[dashed,farey] (-1,0) -- (-1,\myxhigh) node[above]{$\III_{-1}$};
    \draw[dashed,farey] (0,0) -- (0,\myxhigh) node[above]{$\III_0$};
    \draw[dashed,farey,name path=I1] (1,0) -- (1,\myxhigh) node[above]{$\III_1 = R(\III_0)=RS(\III_0)$};
    \draw[dashed,farey,name path=I2] (2,0) -- (2,\myxhigh) node[above]{$\III_2=R^2S(\III_0)$};
    \draw[thin,farey,name path=J-1] (0,0) arc(0:180:0.5);
    \draw[thin,farey,name path=J0] (1,0) arc(0:180:0.5);
    \node [farey] at (165:0.85) {$\JJJ_{-1}$};
    \node [farey] at (15:0.875) {$\JJJ_0$};

    \def\g{0.3}
    \def\r{0.36}
    \draw[blue,thick,name path=Hblu] (\g,\r) circle (\r);
    \node[blue] at (\g,0) {$\bullet$};
    \node[blue,below] at (\g,0) {$\gamma$};
    \path [name intersections={of=I0 and Hblu,by=z}];
    \node [blue,above left] at (z) {$(z,\zeta)$};
    \path [name intersections={of=J0 and Hblu,by=z'}];
    \node [blue,right] at (z') {$(z',\zeta')$};
    \draw[blue,-triangle 45] (z) -- ($(z)!0.2cm!(\g,\r)$);
    \draw[blue,-triangle 45] (z') -- ($(z')!0.2cm!(\g,\r)$);

    \def\g{0.6}
    \def\r{1}
    \draw[Red,thick,name path=Hblu] (\g,\r) circle (\r);
    \node[Red] at (\g,0) {$\bullet$};
    \node[Red,below] at (\g,0) {$\gamma$};
    \path [name intersections={of=I0 and Hblu,by=z}];
    \node [Red,above left] at (z) {$(z,\zeta)$};
    \path [name intersections={of=I1 and Hblu,by=z'}];
    \node [Red,above right] at (z') {$(z',\zeta')$};
    \draw[Red,-triangle 45] (z) -- ($(z)!0.2cm!(\g,\r)$);
    \draw[Red,-triangle 45] (z') -- ($(z')!0.2cm!(\g,\r)$);

    \def\g{1}
    \def\r{1.05}
    \draw[Green,thick,name path=Hblu] (\g,\r) circle (\r);
    \node[Green] at (\g,0) {$\bullet$};
    \node[Green,below right] at (\g,0) {$\gamma$};
    \path [name intersections={of=I0 and Hblu,by=z}];
    \node [Green,left] at (z) {$(z,\zeta)$};
    \path [name intersections={of=I2 and Hblu,by=z'}];
    \node [Green,right] at (z') {$(z',\zeta')$};
    \draw[Green,-triangle 45] (z) -- ($(z)!0.2cm!(\g,\r)$);
    \draw[Green,-triangle 45] (z') -- ($(z')!0.2cm!(\g,\r)$);

\end{tikzpicture}
    \caption{Action of the first return map $\PPP_h$ for
      $\eps=+1$. The red horocycle has $\gamma<1$ and $r>1-\gamma$, so
      that the first return occurs on $\III_1$; the blue horocycle has
      $\gamma<1$ and $r<1-\gamma$, so that the first return occurs on
      $\JJJ_0$; the green horocycle has $\gamma=1$ so that the first
      return occurs on $\III_2$.}\label{fig-case+1}
\end{figure}

\noindent
In the case $\eps=-1$, following $\tilde h^+_s$ with $s$ negative, the
points slide on the negative abscissa half-plane, so the first return
may occur on $\JJJ_{-1}$ or on $\III_{-1}$, or back on $\III_0$ (see
Figure~\ref{fig-case-1}). The different possibilities are described in
Table~\ref{tab-eps-1}. In this case note that
$r\in (1+\gamma,\gamma(1+\gamma))$ is possible only when $\gamma> 1$,
and it corresponds to the only situation for which the return occurs
on $\III_{-1}$, since the horocycle does not intersect $\JJJ_{-1}$ but
does intersect $\III_{-1}$.

\begin{table}[h]
    \begin{tabular}{c|c|c|c|c}
        \textbf{Conditions on $\gamma$ and $r$} & \textbf{First return} & $\Gamma$ & $\PPP_h(\gamma,r,\eps)$ & $M(\gamma')$\\
        \specialrule{1pt}{1pt}{1pt}
        $r>\gamma(1+\gamma)$ & $\JJJ_{-1}$ & $L$ & $\left(\frac{\gamma}{1+\gamma}, \frac{r}{(1+\gamma)^2}, -1\right)$ & $L\, M(\gamma)$ \\
        \hline
        $r=\gamma(1+\gamma)$ & $\JJJ_{-1}$ & $L$ & $\left(\frac{\gamma}{1+\gamma}, \frac{\gamma}{1+\gamma}, 0 \right)$ & $L\, M(\gamma)$ \\
        \hline
        $\gamma > 1$, $1+\gamma<r<\gamma(1+\gamma)$ & $\III_{-1}$ & $R$ & $\left(\gamma+1, r, -1\right)$ & $R\, M(\gamma)$ \\
        \hline
        $\gamma > 1$, $r=1+\gamma$ & $\III_{-1}$ & $R$ & $\left(\gamma+1, \gamma+1, 0\right)$ & $R\, M(\gamma)$ \\
        \hline
        $r<\min\{1+\gamma, \gamma(1+\gamma)\}$ & $\III_{0}$ & $I$ & $\left(\gamma, r, +1\right)$ & $M(\gamma)$ \\
    \end{tabular}
    \caption{Definition of the map $\PPP_h$ for $\eps=-1$.}
    \label{tab-eps-1}
\end{table}

\begin{figure}[h!]
    \pgfmathsetmacro{\myxlow}{-1.35}
\pgfmathsetmacro{\myxhigh}{2.25}
\pgfmathsetmacro{\myiterations}{2}

\begin{tikzpicture}[scale=3.7]
    \clip (\myxlow,-0.5) rectangle (\myxhigh+0.2,\myxhigh+0.2);

    \draw[-latex',thin](\myxlow-0.1,0) -- (\myxhigh+0.2,0);
    \draw[-latex',thin,name path=I0](0,0) -- (0,2.2);

    \pgfmathsetmacro{\succofmyxlow}{\myxlow+1}
    \foreach \x in {-1,-0.5,0,0.5,1,2}
    {
      \draw (\x,0) -- (\x,-0.05);
    }
    \foreach \x in {-1,0,1,2}
    {
      \draw (\x,0) -- (\x,-0.05) node[below]{$\x$};
    }
    \draw (-0.5,0) -- (-0.5,-0.05) node[below]{$-\frac 12$};
    \draw (0.5,0) -- (0.5,-0.05) node[below]{$\frac 12$};

    \draw[dashed,farey,name path=I-1] (-1,0) -- (-1,\myxhigh) node[above]{$\III_{-1}$};
    \draw[dashed,farey] (0,0) -- (0,\myxhigh) node[above,yshift=0.06cm]{$\III_0$};
    \draw[dashed,farey,name path=I1] (1,0) -- (1,\myxhigh) node[above]{$\III_1 = R(\III_0)=RS(\III_0)$};
    \draw[dashed,farey,name path=I2] (2,0) -- (2,\myxhigh) node[above]{$\III_2=R^2S(\III_0)$};
    \draw[thin,farey,name path=J-1] (0,0) arc(0:180:0.5);
    \draw[thin,farey,name path=J0] (1,0) arc(0:180:0.5);
    \node [farey] at (160:0.8) {$\JJJ_{-1}$};
    \node [farey] at (30:1) {$\JJJ_0$};

    \def\g{1.9}
    \def\r{3.2}
    \draw[blue,thick,name path=Hblu] (\g,\r) circle (\r);
    \node[blue] at (\g,0) {$\bullet$};
    \node[blue,below] at (\g,0) {$\gamma$};
    \path [name intersections={of=I0 and Hblu,by=z}];
    \node [blue,below left] at (z) {$(z,\zeta)$};
    \path [name intersections={of=I-1 and Hblu,by=z'}];
    \node [blue,below left] at (z') {$(z',\zeta')$};
    \draw[blue,-triangle 45] (z) -- ($(z)!0.2cm!(\g,\r)$);
    \draw[blue,-triangle 45] (z') -- ($(z')!0.2cm!(\g,\r)$);

    \def\g{0.9}
    \def\r{1.8}
    \draw[Red,thick,name path=Hblu] (\g,\r) circle (\r);
    \node[Red] at (\g,0) {$\bullet$};
    \node[Red,below] at (\g,0) {$\gamma$};
    \path [name intersections={of=I0 and Hblu,by=z}];
    \node [Red,below] at (z) {$(z,\zeta)$};
    \path [name intersections={of=J-1 and Hblu,by=z'}];
    \node [Red,left] at (z') {$(z',\zeta')$};
    \draw[Red,-triangle 45] (z) -- ($(z)!0.2cm!(\g,\r)$);
    \draw[Red,-triangle 45] (z') -- ($(z')!0.2cm!(\g,\r)$);

    \def\g{1.4}
    \def\r{1.45}
    \draw[Green,thick,name path=Hblu] (\g,\r) circle (\r);
    \node[Green] at (\g,0) {$\bullet$};
    \node[Green,below] at (\g,0) {$\gamma$};
    \path [name intersections={of=I0 and Hblu,by={z',z}}];
    \node [Green,left] at (z) {$(z,\zeta)$};
    \node [Green,left] at (z') {$(z',\zeta')$};
    \draw[Green,-triangle 45] (z) -- ($(z)!0.2cm!(\g,\r)$);
    \draw[Green,-triangle 45] (z') -- ($(z')!0.2cm!(\g,\r)$);

\end{tikzpicture}
    \caption{Action of the first return map $\PPP_h$ for
      $\eps=-1$. The red horocycle has $\gamma<1$ and
      $r>\gamma(1+\gamma)$, so that the first return occurs on
      $\JJJ_{-1}$; the blue horocycle has $\gamma>1$ and
      $1+\gamma<r<\gamma(1+\gamma)$, so that it does not intersect
      $\JJJ_{-1}$ and thus the first return occurs on $\III_{-1}$; the
      green horocycle has $\gamma<1$ and
      $r<\min\{1+\gamma, \gamma(1+\gamma)\}$, so that the first return
      occurs on $\III_0$.}\label{fig-case-1}
\end{figure}
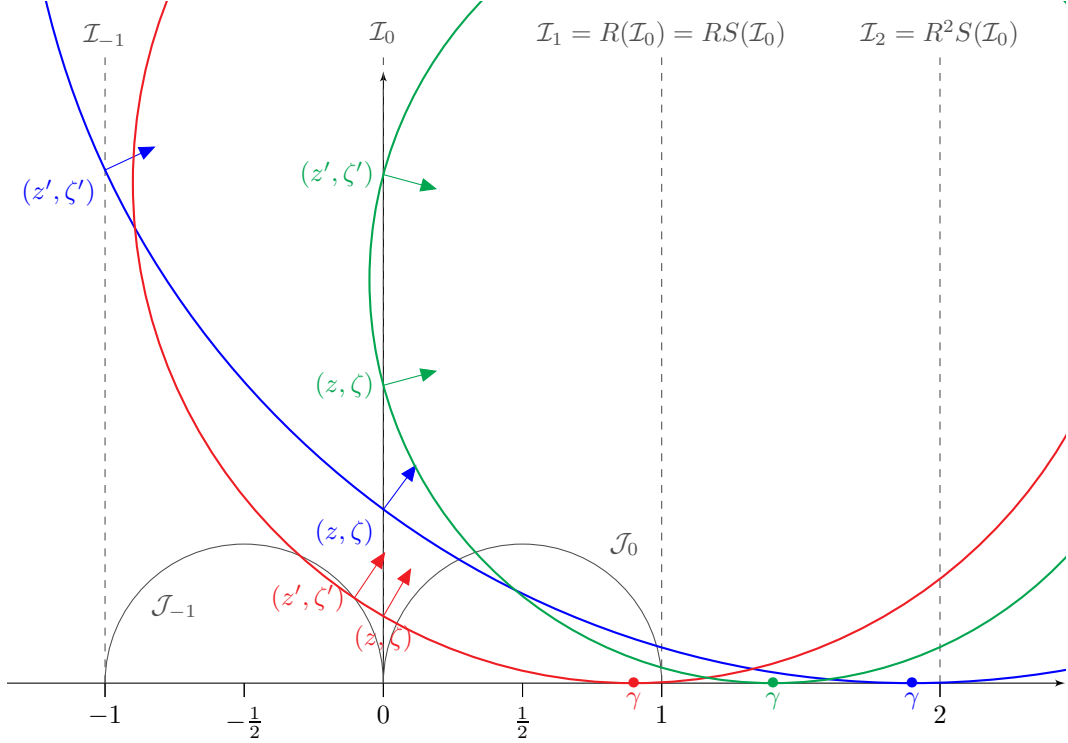

\noindent
It remains to describe the case $\eps=0$. In this case we necessarily have
$r=\gamma$ and everything works analogously to the case
$\eps=+1$. See Table \ref{tab-eps0}.

\begin{table}[h]
    \begin{tabular}{c|c|c|c|c}
        \textbf{Conditions on $\gamma$ and $r$} & \textbf{First return} & $\Gamma$ & $\PPP_h(\gamma,r,\eps)$ & $M(\gamma')$\\
        \specialrule{1pt}{1pt}{1pt}
        $\gamma =r >1$ & $\III_1$ & $R^{-1}$ & $\left(\gamma -1, \gamma, +1\right)$ & $\sigma(M(\gamma))$ \\
        \hline
        $\gamma=r=1$ & $\III_2$ & $SR^{-2}$ & $\left(1, 1, 0\right)$ & $M(\gamma)$ \\
        \hline
        $\frac 12 <\gamma=r<1$ & $\III_1$ & $SR^{-1}$ & $\left(\frac{1}{1-\gamma}, \frac{\gamma}{(1-\gamma)^2}, -1\right)$ & $R\sigma(M(\gamma))$ \\
        \hline
        $\gamma=r = \frac 12$ & $\III_1$ & $SR^{-1}$ & $\left(2, 2, 0\right)$ & $R\sigma(M(\gamma))$ \\
        \hline
        $\gamma=r<\frac 12$ & $\JJJ_0$ & $L^{-1}$ & $\left(\frac{\gamma}{1-\gamma}, \frac{\gamma}{(1-\gamma)^2}, +1\right)$ & $\sigma(M(\gamma))$ \\
    \end{tabular}
    \caption{Definition of the map $\PPP_h$ for $\eps=0$.}
    \label{tab-eps0}
\end{table}

\noindent
Having constructed the map $(\WW,\PPP_h)$, we now proceed to construct the associated suspension flow. Let $(z,\zeta)\in \CCC$ and $(\gamma,r,\eps) := \WWW(z,\zeta)$. The first return of the horocycle flow $\tilde h_s^+$ along $W^+(z,\zeta)$ for negative times $s$ occurs at a time $s_h(z,\zeta)<0$, which can accordingly be written as a function $s_h(\gamma,r,\eps)$. Set
\[
\Sigma_h := \set{(\gamma,r,\eps,\xi) \in \WW \times \R\, :\, s_h(\gamma,r,\eps)\le \xi \le 0}
\]
and use the map $\PPP_h$ to define the flow $\phi_s : \Sigma_h \to \Sigma_h$ by
\begin{equation}\label{susp-flow-horo}
    \phi_s (\gamma,r,\eps,\xi) =
    \begin{cases}
        (\gamma, r, \eps, \xi+s) & \text{if
          $s_h(\gamma,r,\eps)<\xi+s\le 0$}
        \\
        (\PPP_h(\gamma, r,\eps), 0) & \text{if
          $\xi+s=s_h(\gamma,r,\eps)$}
    \end{cases}.
\end{equation}
We made explicit the flow for $s$ negative, the case $s$ positive is defined analogously as $\PPP_h$ is an invertible map.
The next Proposition summarizes some basic ergodic properties of the dynamical systems $(\WW,\PPP_h)$ and
$(\Sigma_h,  \phi_s)$.

\begin{prop}\label{mis-inv-horo}

    \begin{enumerate}[label={\upshape(\roman*)},wide = 0pt,leftmargin=*]
      \item The map $\PPP_h :\WW \to \WW$ preserves the infinite
        measure $\nu$ which is absolutely continuous with respect to
        the Lebesgue measure on $\WW$ with density
        $k(\gamma,r,\eps)=r^{-2}$.
      \item The flow $\phi_s : \Sigma_h \to \Sigma_h$ preserves the
        measure $\tilde \nu$ which is absolutely continuous with
        respect to the Lebesgue measure on $\Sigma_h$ with density
        $\tilde k(\gamma,r,\eps,\xi)=r^{-2}$. Moreover
        $\tilde \nu(\Sigma_h) = 4\pi^2/3$.
      \item The dynamical system $(\Sigma_h, \tilde \nu, \phi_s)$ is
        isomorphic to the system $(S\MMM,\tilde m, \tilde h_s^+)$,
        where $\tilde m$ is the projection on $S\MMM$ of the Liouville
        measure $dm(x,y,\theta) = y^{-2}\, dx\, dy\, d\theta$ on
        $S\HP$, with $z=x+i y$ and $\theta = \theta(\zeta)$.
    \end{enumerate}
\end{prop}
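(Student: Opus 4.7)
The plan is to prove the three parts in order, with Part (i) being a direct case analysis, Part (ii) following from standard suspension-flow theory, and Part (iii) being the substantive claim that drives the computation of total mass in (ii).

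For Part (i), each row of Tables 4.1-4.3 (with $\eps=\pm 1$) defines an open cell of $\WW$ on which $\PPP_h$ acts as $(\gamma,r)\mapsto(\Gamma(\gamma),\DDD{\Gamma(\gamma)}\,r)$ for a fixed $\Gamma=\begin{pmatrix}a&b\\c&d\end{pmatrix}\in PSL(2,\Z)$. Since $\DDD{\Gamma(\gamma)}=(c\gamma+d)^{-2}$, the Jacobian of the coordinate map $(\gamma,r)\mapsto(\gamma',r')$ is lower-triangular with both diagonal entries equal to $(c\gamma+d)^{-2}$, so $|\det J|=(c\gamma+d)^{-4}$. A one-line check,
\[
k(\gamma',r')\,|\det J| \;=\; \bigl((c\gamma+d)^{-2}r\bigr)^{-2}(c\gamma+d)^{-4} \;=\; r^{-2} \;=\; k(\gamma,r),
\]
shows that $d\nu=r^{-2}\,d\gamma\,dr$ is preserved on each cell, and since $\{\eps=0\}$ is a $\nu$-null set, invariance holds on all of $\WW$.

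For Part (ii), the invariance of $\tilde\nu=\nu\otimes ds$ under $\phi_s$ is the standard property of a suspension flow with roof function $|s_h|$ over the base $(\WW,\PPP_h,\nu)$: in the open region $\{s_h<\xi<0\}$ the flow is pure translation in $\xi$, and the gluing at $\xi=s_h$ is handled by the $\nu$-preserving map $\PPP_h$ from Part (i). The cleanest route to the total mass is via Part (iii): once $\Psi_*\tilde\nu=\tilde m$ is established, Fubini gives $\tilde\nu(\Sigma_h)=\int_\WW|s_h|\,d\nu = \tilde m(S\MMM)=4\pi^2/3$, the last equality being the Liouville volume of the unit tangent bundle of the modular surface (computed from the hyperbolic area $\pi/3$ of $\FFF$ and the angular coordinate, taking into account the doubling from the two $\eps=\pm 1$ sheets of $\WW$).

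For Part (iii), I would define
\[
\Psi:\Sigma_h\to S\MMM,\qquad \Psi(\gamma,r,\eps,\xi)=\tilde h^+_\xi\bigl(\pi_*\WWW^{-1}(\gamma,r,\eps)\bigr).
\]
Conjugacy $\Psi\circ\phi_s=\tilde h^+_s\circ\Psi$ is built into the construction, since the identification at $\xi=s_h(\gamma,r,\eps)$ corresponds precisely to the first return of the horocycle orbit to $\CCC$. Bijectivity modulo null sets follows from the fact that every non-degenerate orbit of $\tilde h^+_s$ meets $\CCC$ (the positive horocycle lifts to a circle in $\HP$ meeting $\III_0$ modulo $PSL(2,\Z)$) and from the uniqueness of the first-return parameterization. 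For the measure identity $\Psi_*\tilde\nu=\tilde m$, parameterize $\CCC$ by $(y,\theta)$ with $z=iy\in\III_0$ and $\theta\in(-\pi,0)$; the horocycle being the circle of radius $r$ tangent to $\R$ at $\gamma$ and passing through $z$ yields $\gamma=-r\sin\theta$ and $y=r(1-\cos\theta)$. The remaining step is to check that writing the Liouville measure $y^{-2}\,dx\,dy\,d\theta$ in the flow-adapted coordinates $(\gamma,r,\xi)$, where $\xi$ is the hyperbolic arc length along the horocycle, produces exactly $r^{-2}\,d\gamma\,dr\,d\xi$.

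The main obstacle is this last Jacobian identification. Equivalently, one has to verify that the induced transverse measure on $\CCC$ from the interior product $i_X\tilde m$ of Liouville measure with the horocycle vector field $X$ coincides, via $\WWW$, with $\nu=r^{-2}\,d\gamma\,dr$. This is a direct but delicate computation because the $(\gamma,r)$ and $(y,\theta)$ coordinates interact non-linearly; the cleanest way to carry it out is to compute $i_X\tilde m$ at a point of $\III_0$ using that $X$ has unit hyperbolic length along the horocycle, then to transport this to the $(\gamma,r)$-coordinates using the relations above, simplifying via the half-angle identities $1-\cos\theta=2\sin^2(\theta/2)$ and $\sin\theta=2\sin(\theta/2)\cos(\theta/2)$.
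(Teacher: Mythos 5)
Your part (i) is fine, and it is in fact a different route from the paper: the paper obtains (i) as a corollary of the flow statement (ii), whereas your cell-by-cell Jacobian check, using Lemma \ref{lem:via-matrici} and $|\det J|=(c\gamma+d)^{-4}$, verifies invariance of $\nu$ directly and is a legitimate, more self-contained argument. The gap is in (ii)--(iii). The paper's proof of these consists precisely of the computation you postpone: it writes the explicit change of variables $x=\gamma-2r\xi/(\xi^2+1)$, $y=2r/(\xi^2+1)$, $\theta=-2\arctan(1/\xi)$ (equation \eqref{coord-cr-xytheta}) and computes the Liouville measure in the flow-adapted coordinates, obtaining $dm(\gamma,r,\xi)=\tfrac{1}{2r^2}\,d\gamma\,dr\,d\xi$, from which (ii), (iii) and the value $\tilde\nu(\Sigma_h)=4\pi^2/3$ all follow together with $\tilde m(S\MMM)=2\pi^2/3$. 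You explicitly flag this Jacobian identification as ``the main obstacle,'' describe how one would do it, and never do it; since this is essentially the entire content of the proposition, the core of the proof is missing from your write-up.

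Moreover, your bookkeeping for the total mass cannot be right as stated. You assert both that $\Psi_*\tilde\nu=\tilde m$ (equivalently that Liouville measure in $(\gamma,r,\xi)$ is exactly $r^{-2}\,d\gamma\,dr\,d\xi$) and that $\tilde m(S\MMM)=4\pi^2/3$, justified by ``the doubling from the two $\eps=\pm1$ sheets of $\WW$.'' The Liouville volume of $S\MMM$ is $(\pi/3)\cdot 2\pi=2\pi^2/3$ (the paper uses exactly this value in its proof), and it cannot depend on how the section is sheeted; the two values $\eps=\pm1$ label genuinely distinct points of $\CCC$ on the same horocycle, so no orbit segment is counted twice in $\Sigma_h$ either. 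If your identity $\Psi_*\tilde\nu=\tilde m$ were taken literally, Fubini would give $\tilde\nu(\Sigma_h)=2\pi^2/3$, contradicting the statement you are proving; in the paper the factor $2$ comes out of the Jacobian itself (the density is $\tfrac{1}{2}r^{-2}$, i.e.\ $\tilde\nu$ corresponds to $2\,dm$ in these coordinates), not out of a doubled Liouville volume. So you must actually carry out the change-of-variables computation and reconcile this factor of $2$; as written, your (ii)--(iii) rest on two incompatible normalizations, one of which ($\tilde m(S\MMM)=4\pi^2/3$) is simply false.
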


\begin{proof}
    Given $(z,\zeta)\in S\HP$ with $\theta(\zeta)\not\in \set{0,\pi}$, consider its coordinates $(\gamma,r,\xi)$ defined as $\gamma = \gamma_{z,\zeta}^+$, $r=r^+(z,\zeta)$ being the radius of the horocycle $W^+(z,\zeta)$, and $\xi\in \R$ such that $(z,\zeta) = h_\xi^+(z',\zeta')$ where $z'$ is the point in $W^+(z,\zeta)$ with maximum value of the $y$ component. We obtain the following expressions for the coordinates $(x,y,\theta)$ of $(z,\zeta)$, where $z=x+i y$ and $\theta=\theta(\zeta)$:
    \begin{equation}\label{coord-cr-xytheta}
        x(\gamma,r,\xi) = \gamma - \frac{2r \xi}{\xi^2+1}\, ,\quad y(\gamma,r,\xi) = \frac{2r}{\xi^2+1}\, ,\quad \theta(\gamma,r,\xi) = -2\, \arctan \frac 1 \xi\, .
    \end{equation}
    From \eqref{coord-cr-xytheta} it follows that the Liouville measure $dm(x,y,\theta) =y^{-2}\, dx\, dy\, d\theta$ becomes
    \[
        dm(\gamma,r,\xi) = \frac{1}{2\, r^2}\, d\gamma\, dr\, d\xi
    \]
    in the coordinates $(\gamma,r,\xi)$, and we set $d\tilde \nu = r^{-2} d\gamma dr d\xi$. Moreover by the definition of $\xi$, the horocycle flow $h_s^+$ acts in the coordinates $(\gamma,r,\xi)$ simply by translation on $\xi$, that is
    \[
        h_s^+(\gamma,r,\xi) = (\gamma,r,\xi+s)\, .
    \]
    These facts, together with $\tilde m(S\MMM) = 2\pi^2/3$, prove (ii)
    and (iii). Finally (i) follows from (ii).
\end{proof}

\noindent
Let us calculate an explicit expression for the return time
$s_h(\gamma,r,\eps)$. From Lemma \ref{lem:via-matrici} it
follows that $s_h(\gamma,r,\eps)$ is the time spent by flowing along
the horocycle until its intersection with the right representative of
$\III_0$. Now, by the identification of $S\HP$ with
$PSL(2,\R)$ (see \eqref{rep-matrici-sh}), the map $\WWW$ can be
defined also as a map from $PSL(2,\R)$ to $\WW$. In particular
\begin{equation}\label{map-sl2r-ww-inv}
(z,\zeta) = \WWW^{-1}(\gamma, r, \eps)\quad \Rightarrow\quad \Gamma_{(z,\zeta)} = \begin{pmatrix} \frac{\gamma}{\sqrt{2r}} & - \frac{r+\eps \, \sqrt{r^2-\gamma^2}}{\sqrt{2r}}\\[0.2cm] \frac{1}{\sqrt{2r}} & \frac{r-\eps \, \sqrt{r^2-\gamma^2}}{\gamma \sqrt{2r}} \end{pmatrix} \, .
\end{equation}

\begin{prop} \label{prop:return-time} For
    $(z,\zeta)= \WWW^{-1}(\gamma, r, \eps) \in \CCC$ with
    $z\in \III_0$ and $\theta(\zeta)\in (-\pi,0)$, let
    $\Gamma_{(z,\zeta)}\in PSL(2,\R)$ be as in
    \eqref{map-sl2r-ww-inv}. Assume that the first return to $C$ for
    $\tilde h_s^+$ with negative $s$ along $W^+(z,\zeta)$ occurs on
    $\Gamma^{-1}(\III_0)$ with $\Gamma\in PSL(2,\Z)$, and set
    $(\gamma',r',\eps')=\PPP_h(\gamma,r,\eps)$ and
    $(z',\zeta') = \WWW^{-1}(\gamma',r',\eps')$. Write also the corresponding
    matrix $\Gamma_{(z',\zeta')}$ in the form
    \eqref{map-sl2r-ww-inv}. Then $s_h(\gamma,r,\eps)$ satisfies
    the equation
    \[
        \begin{pmatrix} 1 & s_h(\gamma,r,\eps) \\[0.2cm] 0 & 1 \end{pmatrix} = \Gamma_{(z,\zeta)}^{-1}\, \Gamma^{-1}\, \Gamma_{(z',\zeta')}\, .
    \]
\end{prop}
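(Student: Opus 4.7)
\noindent
My plan is to translate the stated matrix identity directly into the $PSL(2,\R)$ language afforded by \eqref{rep-matrici-sh}. Under this identification the positive horocycle flow acts by right multiplication by the unipotent block, as recorded in \eqref{horocycle-sl2r}, while the lift of an isometry $A\in PSL(2,\R)$ corresponds to \emph{left} multiplication on the associated matrix: since $(z,\zeta) = (\Gamma_{(z,\zeta)})_*(i,i)$, functoriality of the lift gives $A_*(z,\zeta) = (A\,\Gamma_{(z,\zeta)})_*(i,i)$, hence $\Gamma_{A_*(z,\zeta)} = A\,\Gamma_{(z,\zeta)}$. These are the two matrix moves whose composition will produce the first return.

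\noindent
The geometric step is to identify the right composition. Flowing $(z,\zeta)$ along $W^+(z,\zeta)$ for negative time by exactly $s_h(\gamma,r,\eps)$ produces a point of $C$ whose base lies on $\Gamma^{-1}(\III_0)$. By the very construction of the map $\PPP_h$ and of the coordinates $\WWW$, the representative in $\CCC$ of this point is $(z',\zeta') = \WWW^{-1}(\gamma',r',\eps')$, obtained by applying the lifted isometry $\Gamma_*$, so that
\[
(z',\zeta') \;=\; \Gamma_*\bigl(h^+_{s_h(\gamma,r,\eps)}(z,\zeta)\bigr).
\]
Combining this with the two multiplication rules above yields
\[
\Gamma_{(z',\zeta')} \;=\; \Gamma\cdot \Gamma_{(z,\zeta)}\cdot \begin{pmatrix} 1 & s_h(\gamma,r,\eps) \\[0.1cm] 0 & 1 \end{pmatrix},
\]
and multiplying on the left by $\Gamma_{(z,\zeta)}^{-1}\Gamma^{-1}$ produces exactly the stated equation.

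\noindent
The only verification still required is that the specific representative $\Gamma_{(z',\zeta')}$ prescribed by \eqref{map-sl2r-ww-inv} coincides on the nose with $\Gamma\,\Gamma_{h^+_{s_h}(z,\zeta)}$. This follows from the uniqueness clause in \eqref{rep-matrici-sh}, once one checks that the tangent direction encoded by $\eps'$ in \eqref{map-sl2r-ww-inv} agrees with $\theta\bigl(\Gamma_*(h^+_{s_h}(z,\zeta))\bigr)$; but this is precisely the content of the case-by-case definition of $(\gamma',r',\eps')$ in Tables \ref{tab-eps+1}--\ref{tab-eps0}, where the occasional appearance of the factor $S$ inside $\Gamma$ is exactly what guarantees $\theta(\zeta')\in(-\pi,0)$. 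I therefore expect no genuine obstacle: the proposition is really a bookkeeping identity, and the only delicate points are keeping straight the side of multiplication (right for the flow, left for isometries) and confirming that the $\eps$-conventions match after the $\Gamma$-rotation.
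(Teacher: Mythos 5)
Your proposal is correct and follows essentially the same route as the paper's proof: both combine the right-multiplication rule \eqref{horocycle-sl2r} for the flow with the left-multiplication rule for the lifted isometry $\Gamma$, and equate the two expressions for the matrix of the first-return point (the paper writes this as two formulas for $\Gamma_{h^+_{s_h}(z,\zeta)}$, you equivalently solve for $\Gamma_{(z',\zeta')}$ and rearrange). Your closing remark that the representative fixed by \eqref{map-sl2r-ww-inv} matches the $\Gamma$-image of the return point is exactly the content of the identification $(z',\zeta')=\WWW^{-1}(\PPP_h(\gamma,r,\eps))$ built into the definition of $\PPP_h$, which the paper invokes implicitly.
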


\begin{proof}
The horocycle flow is represented by the right action of matrices in $PSL(2,\R)$ as in \eqref{horocycle-sl2r}. Hence
\[
\Gamma_{h_{s_h(\gamma,r,\eps)}^+(z,\zeta)} = \Gamma_{(z,\zeta)} \, \begin{pmatrix} 1 & s_h(\gamma,r,\eps) \\[0.2cm] 0 & 1 \end{pmatrix}\, .
\]
In addition, if the first return to $\CCC$ occurs on $\Gamma^{-1}(\III_0)$ at the point $\Gamma^{-1}(z',\zeta')$, we have
\[
\Gamma_{h_{s_h(\gamma,r,\eps)}^+(z,\zeta)} = \Gamma^{-1}\Gamma_{(z',\zeta')}\, .
\]
This concludes the proof.
\end{proof}

\noindent
Using the above Proposition along with Tables \ref{tab-eps+1}, \ref{tab-eps-1}, \ref{tab-eps0}, and Equation
\eqref{map-sl2r-ww-inv}, we can make the form of $s_h(\gamma,r,\eps)$ explicit for all
$(\gamma,r,\eps) \in \WW$. By means of the functions
\begin{equation}\label{funz-supporto}
u(\gamma,r,\eps) := - \eps\, \frac{\sqrt{r^2-\gamma^2}}{\gamma}\, ,\quad v(\gamma,r,\eps):= \eps\, \frac{r}{\gamma}\, , \quad \delta(\gamma,r,\eps):= \delta_0(\eps)\, ,
\end{equation}
where $\delta_0(\cdot)$ takes the value 1 at 0, and vanishes elsewhere, we get the expressions collected in the following two tables. One may readily check that $s_h(\gamma,r,\eps)<0$ for all $(\gamma,r,\eps) \in \WW$ and also $s_h \in L^1(\WW,\nu)$, so that the $(\Sigma_h, \tilde \nu)$ as defined in Proposition \ref{mis-inv-horo} is a finite measure space.

\begin{table}[h]
    \begin{tabular}{c|c}
        \textbf{Conditions on $\gamma$ and $r$} & $s_h(\gamma,r,\eps)$ \\
        \specialrule{1pt}{1pt}{1pt}
        $r<1-\gamma$ & $((u-v)\circ\PPP_h)(\gamma,r,\eps) - (u-v)(\gamma,r,\eps) + (\delta\circ\PPP_h +\delta)(\gamma,r,\eps)$\\
        \hline
        $r\ge 1-\gamma$ & $((u+v)\circ\PPP_h)(\gamma,r,\eps) - (u+v)(\gamma,r,\eps) - (\delta\circ\PPP_h +\delta)(\gamma,r,\eps)$ \\
    \end{tabular}
    \caption{The return time $s_h$ for $\eps\in \{0,+1\}$ in terms of the functions defined in \eqref{funz-supporto}.}
    \label{tab-sh-eps0+1}
\end{table}

\begin{table}[h]
    \begin{tabular}{c|c}
        \textbf{Conditions on $\gamma$ and $r$} & $s_h(\gamma,r,\eps)$ \\
        \specialrule{1pt}{1pt}{1pt}
        $r<\min\{1+\gamma, \gamma(1+\gamma)\}$ & $(u\circ \PPP_h -u)(\gamma,r,\eps)$ \\
        \hline
        $1+\gamma \le r <\gamma(1+\gamma)$ & $((u-v)\circ\PPP_h)(\gamma,r,\eps) - (u-v)(\gamma,r,\eps) + (\delta\circ\PPP_h +\delta)(\gamma,r,\eps)$ \\
        \hline
        $r\ge \gamma(1+\gamma)$ & $((u+v)\circ\PPP_h)(\gamma,r,\eps) - (u+v)(\gamma,r,\eps) - (\delta\circ\PPP_h +\delta)(\gamma,r,\eps)$ \\
    \end{tabular}
  \caption{The return time $s_h$ for $\eps=-1$ in terms of the functions defined in \eqref{funz-supporto}.}
  \label{tab-sh-eps-1}
\end{table}

\begin{rem}\label{rem-suspension}
    The expressions for the return time $s_h(\gamma,r,\eps)$ displayed in
    Table \ref{tab-sh-eps0+1} and \ref{tab-sh-eps-1} show that the
    suspension flow $\phi_s :\Sigma_h \to \Sigma_h$ over
    $(\WW,\PPP_h)$, defined in \eqref{susp-flow-horo}, has roof function
    which is not cohomologous to a constant. If it were, there should be
    a function $\Psi: \WW \to \R$ so that
\[
s_h - \int_\WW\, s_h\, d\nu = \Psi \circ \PPP_h - \Psi\, .
\]
This does not happen because the sets on which we have defined the return time are not $\PPP_h$-invariant. For example, choosing $(\gamma,r,\eps)$ with $\eps=+1$ and $r\in ((1-\gamma)(1-2\gamma), 1-\gamma)$, we see that $(\gamma',r',\eps') = \PPP_h(\gamma, r,\eps)$ satisfies $\eps'=+1$ and $r' = r/(1-\gamma)^2 >1-\gamma' = \gamma/(1-\gamma)$. On the other hand, the reader will have noticed that the function $v$ changes sign when passing from the first to the second row in Table \ref{tab-sh-eps0+1}.
\end{rem}

\section{Closed horocycles and the permuted Stern-Brocot trees} \label{sec:periodic}

We have shown in Proposition \ref{mis-inv-horo} that the suspension flow $\phi_s$ over the system $(\WW,\PPP_h)$ is isomorphic to the horocycle flow on $S\MMM$ with respective invariant measures. This implies that there is a one-to-one correspondence between the closed horocycles on $S\MMM$ which intersect $\CCC$ and the periodic points of the map $\PPP_h$. This correspondence may be stated by saying that for a point $(z,\zeta)\in \CCC$ with $(\gamma,r,\eps)=\WWW(z,\zeta)\in \WW$, there exist $\Gamma \in SL(2,\Z)$ and $s\in (-\infty,0)$ such that $(z,\zeta) = \Gamma_*(\tilde h^+_s(z,\zeta))$ if and only if there exists $n\in \N$ such that $\PPP_h^n(\gamma,r,\eps) = (\gamma,r,\eps)$.

\noindent
It is well-known (see e.g. \cite[chap. 11]{eins-book}) that the closed horocycles are generated by the points $(iy,i) \in S\MMM$. More precisely, they are the projections on $S\MMM$ of the horocycles $W^+(iy,i)$ in $S\HP$ given by the set of points $(x+iy,i)$ with $x\in \R$. Since $ h_s^+ : (x+iy,i) \to (x+sy+iy,i)$, they have length $1/y$ when projected on $S\MMM$.

\begin{prop}\label{prop:closed-w}
If $y\le \frac 12$ the projection of the horocycle $W^+(iy,i)$ on $S\MMM$ intersects $\CCC$.
\end{prop}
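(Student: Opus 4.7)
By the characterization \eqref{equiv-poin-sec} of the preimage $C \subset S\HP$ of $\CCC$, it suffices to exhibit $x_0 \in \R$ and a matrix $\Gamma \in PSL(2,\Z)$ such that the point $(x_0+iy, i)$ on the lifted horocycle $W^+(iy,i) = \{(x+iy, i) : x \in \R\}$ satisfies $\Gamma(x_0+iy) \in \III_0$ and $\theta(\Gamma_* i) \in (-\pi, 0)$; its projection to $S\MMM$ then lies in $\CCC$ and on the projected horocycle. The strategy is to choose $\Gamma = L$, so that $L(z) = z/(z+1)$.

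A direct expansion gives
\[
    L(x+iy) = \frac{x(x+1) + y^2 + iy}{(x+1)^2 + y^2},
\]
so $L(x+iy) \in \III_0$ precisely when $x^2 + x + y^2 = 0$. This quadratic has a real root if and only if $y \le 1/2$, and for such $y$ I take the smaller root $x_0 := (-1-\sqrt{1-4y^2})/2$. Using the identity $y^2 = -x_0(x_0+1)$, the denominator collapses to $(x_0+1)^2 + y^2 = x_0+1$, and hence $L(x_0+iy) = iy/(x_0+1)$. The further inequality $y/(x_0+1) \ge 1$, which upgrades the conclusion to $L(x_0+iy) \in \III^+$, is equivalent to $(1-2y)^2 \le 1-4y^2$, itself equivalent to the hypothesis $y \le 1/2$.

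It remains to verify the angle. From $L'(z) = (z+1)^{-2}$ the lifted unit vector at $L(x_0+iy)$ is $L_* i = i/(x_0+1+iy)^2$, whose complex argument equals $\pi/2 - 2\arg(x_0+1+iy)$; translating to the angle $\theta$ measured counterclockwise from the positive $y$-axis gives $\theta(L_* i) = -2\arg(x_0+1+iy)$, which lies strictly in $(-\pi, 0)$ since $x_0+1 > 0$ and $y > 0$. By \eqref{equiv-poin-sec} this shows $(x_0+iy, i) \in C$, proving that the projected horocycle meets $\CCC$. The only delicate point is the choice $\Gamma = L$; once it is made, every inequality becomes tight simultaneously at the threshold $y = 1/2$, where $L(x_0+iy) = i$ and $\theta = -\pi/2$. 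Geometrically, $L$ maps $\III^+$ onto the arc of the circle $|z-1/2| = 1/2$ in the upper half-plane whose maximal imaginary part is $1/2$, attained at $L(i) = (1+i)/2$; this arc meets the horizontal horocycle $\{\mathrm{Im}\, z = y\}$ exactly when $y \le 1/2$.
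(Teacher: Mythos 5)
Your proof is correct, and it is essentially the paper's mechanism run in the opposite direction. The paper pushes the whole horocycle forward by $RS$, obtaining the horocycle tangent to $\R$ at $1$ of radius $\tfrac{1}{2y}\ge 1$, and observes that it meets $\III_0$ (the lower intersection giving $\WWW(z,\zeta)=(1,\tfrac{1}{2y},-1)$, the tangency giving $(1,1,0)$); you instead pull the section back, intersecting the horizontal horocycle with $L^{-1}(\III_0)=\JJJ_{-1}$ -- this is exactly your quadratic $x^2+x+y^2=0$, solvable precisely when $y\le\tfrac12$ -- and push the intersection point forward by $L$. Since $RS=LR^{-1}$ in $PSL(2,\Z)$ and $R^{-1}$ preserves the horizontal horocycle, your $L$ and the paper's $RS$ carry $W^+(iy,i)$ to the same image circle, so the geometry is identical; the differences are in presentation. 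Your route is more self-contained: you verify the angle condition explicitly through $\arg L'(z)$, and your extra check $y/(x_0+1)\ge 1$ lands the image point in $\III^+$, so the conclusion follows from the original definition of $\CCC$ and not only from the identification \eqref{equiv-poin-sec}. What the paper's route buys, and yours omits, are the $\WWW$-coordinates $(1,\tfrac{1}{2y},\mp 1)$ (resp.\ $(1,1,0)$) of the intersection point, which are precisely what is used immediately afterwards (closed horocycle of length $\ell$ corresponding to the point $(1,\ell/2,\eps)$, Remark \ref{rem-closed-w}, and Section \ref{sec:athreya}). One cosmetic slip in your closing gloss: you describe the image arc $L(\III^+)\subset\JJJ_0$ peaking at $L(i)=(1+i)/2$, whereas the circle actually used in your computation is the preimage $L^{-1}(\III_0)=\JJJ_{-1}$, peaking at $L^{-1}(i)=(-1+i)/2$; both have maximal height $\tfrac12$ (they are mirror images across $\III_0$), so the threshold is unaffected, but the arc you name is not the one your horocycle intersects.
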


\begin{proof}
By applying $S$ to $W^+(iy,i)$ we see that $S(W^+(iy,i))$ is the circle tangent to $\R$ at 0 and of radius $\frac 1{2y}\ge 1$. Then $R(S(W^+(iy,i)))$, the circle tangent to $\R$ at 1 and of radius $\frac 1{2y}\ge 1$, is equal to $W^+(z,\zeta)$ with $(z,\zeta)\in \CCC$ and $z\in \III_0$. If $y<1/2$, we can choose $(z,\zeta)$ to be the determined by the intersection of $R(S(W^+(iy,i)))$ with $\III_0$ with smaller value of the variable $y$, therefore it follows that $\WWW(z,\zeta) = (1,\frac 1{2y},-1)$. If $y=1/2$, the intersection of $R(S(W^+(iy,i)))$ with $\III_0$ consists of a single point, and therefore $\WWW(z,\zeta) = (1,1,0)$.
\end{proof}

\begin{rem}\label{rem-closed-w}
In the previous Proposition, one can use the identification between $S\HP$ and $PSL(2,\R)$ recalled in Section \ref{sec:facts}, to prove that for the point $(z,\zeta)\in \CCC$ in the intersection between $R(S(W^+(iy,i)))$ and $\III_0$ we have
\[
\Gamma_{(z,\zeta)} = \begin{pmatrix}
\sqrt{y} & \frac{-1+\sqrt{1-4y^2}}{2\sqrt{y}}\\[0.2cm]
\sqrt{y} & \frac{1+\sqrt{1-4y^2}}{2\sqrt{y}}
\end{pmatrix}.
\]
Moreover $\tilde h^+_s(z,\zeta) = (z,\zeta)$ for $s=-1/y$ since
\[
SR^{-2} \begin{pmatrix}
\sqrt{y} & \frac{-1+\sqrt{1-4y^2}}{2\sqrt{y}}\\[0.2cm]
\sqrt{y} & \frac{1+\sqrt{1-4y^2}}{2\sqrt{y}}
\end{pmatrix}
\begin{pmatrix}
1 & -\frac 1y \\[0.2cm]
0 & 1
\end{pmatrix} =  \begin{pmatrix}
\sqrt{y} & \frac{-1+\sqrt{1-4y^2}}{2\sqrt{y}}\\[0.2cm]
\sqrt{y} & \frac{1+\sqrt{1-4y^2}}{2\sqrt{y}}
\end{pmatrix} .
\]
\end{rem}

\noindent
A consequence of Proposition~\ref{prop:closed-w} is that closed
horocycles of length $\ell\ge 2$ correspond to periodic points of the
map $\PPP_h$, and in particular the horocycle of length $\ell$
corresponds to the point $(1, \ell/2, \eps) \in \WW$ with $\eps=\pm 1$
if $\ell>2$, and $\eps=0$ if $\ell=2$. In the following we
characterise the periodic points of $\PPP_h$, their orbits and their
periods.

\begin{exam}\label{esempio-per}
Let us follow the orbits of two points of the form $(1,r,\eps)$ with $r\in \N$ under the action of $\PPP_h$. First, setting $r=1$, the point $(1,1,0)$ is a fixed point of $\PPP_h$ as shown in Table \ref{tab-eps0}.

\noindent
Setting instead $r=2$, we can consider the point $(1,2,-1)$. Applying $\PPP_h$ according to Table \ref{tab-eps-1} for $r=\gamma(1+\gamma)$, we find $\PPP_h(1,2,-1) = (1/2, 1/2, 0)$. Then $\PPP_h(1/2, 1/2, 0) = (2,2,0)$ and $\PPP_h(2,2,0) = (1,2,+1)$ by Table \ref{tab-eps0}. Finally we use Table \ref{tab-eps+1} to get $\PPP_h(1,2,+1) = (1,2,-1)$. Hence $(1,2,-1)$ is periodic of period 4.
\end{exam}

\noindent
First we characterise the periodic points of $\PPP_h$ without using the known results on closed horocycles.

\begin{prop}\label{prop:per-points-rat}
If a point $(\gamma,r,\eps)\in \WW$ is periodic for $\PPP_h$ then $\gamma \in \Q$.
\end{prop}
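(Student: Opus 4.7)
My strategy is to use the matrix representation of $\CCC$ in $PSL(2,\R)$ from \eqref{map-sl2r-ww-inv} to reinterpret periodicity of $(\gamma,r,\eps)$ under $\PPP_h$ as the statement that a certain element of $PSL(2,\Z)$ built from the orbit is a nontrivial parabolic element fixing $\gamma$. Once this is established, the conclusion $\gamma \in \Q^+$ is immediate from the classical fact that the modular surface $\MMM$ has a single cusp, whose stabilizer class is $\Q \cup \{\infty\}$.

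To make this concrete, let $(\gamma_i, r_i, \eps_i) := \PPP_h^i(\gamma, r, \eps)$ and $(z_i, \zeta_i) := \WWW^{-1}(\gamma_i, r_i, \eps_i)$, and let $\Gamma_{i+1} \in PSL(2,\Z)$ denote the matrix from Lemma~\ref{lem:via-matrici} corresponding to the step from $(\gamma_i, r_i, \eps_i)$ to $(\gamma_{i+1}, r_{i+1}, \eps_{i+1})$. Write $s_{h,i} := s_h(\gamma_i, r_i, \eps_i) < 0$ and $T(s) := \begin{pmatrix} 1 & s \\ 0 & 1 \end{pmatrix}$. Proposition~\ref{prop:return-time} rearranges to the one-step identity
\[
\Gamma_{(z_{i+1}, \zeta_{i+1})} = \Gamma_{i+1}\, \Gamma_{(z_i, \zeta_i)}\, T(s_{h,i}).
\]
Because the translations $T(s)$ commute amongst themselves, an easy induction pushes all of them to the right and yields
\[
\Gamma_{(z_n, \zeta_n)} = \Gamma\, \Gamma_{(z_0, \zeta_0)}\, T\bigl(s_{h,0} + s_{h,1} + \cdots + s_{h,n-1}\bigr), \qquad \Gamma := \Gamma_n \Gamma_{n-1} \cdots \Gamma_1 \in PSL(2,\Z).
\]

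Now, periodicity of period $n$ is exactly the equality $\Gamma_{(z_n, \zeta_n)} = \Gamma_{(z_0, \zeta_0)}$ in $PSL(2,\R)$. Setting $s := -\sum_{i=0}^{n-1} s_{h,i}$, which is strictly positive since each $s_{h,i} < 0$, the previous identity rearranges to
\[
\Gamma = \Gamma_{(z_0, \zeta_0)}\, T(s)\, \Gamma_{(z_0, \zeta_0)}^{-1}.
\]
Thus $\Gamma \in PSL(2,\Z)$ is conjugate in $PSL(2,\R)$ to the nontrivial unipotent translation $T(s)$, so $\Gamma$ is a parabolic element with trace $\pm 2$, and its unique fixed point on $\partial \HP$ is $\Gamma_{(z_0, \zeta_0)}(\infty)$, since $T(s)$ fixes $\infty$. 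A direct inspection of \eqref{map-sl2r-ww-inv} gives $\Gamma_{(z_0, \zeta_0)}(\infty) = \gamma$. Because the parabolic fixed points of the modular group are precisely the cusps $\Q \cup \{\infty\}$ of $\MMM$ and $\gamma \in \R^+$, we conclude $\gamma \in \Q^+$.

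The main obstacle is the clean iterated formula for $\Gamma_{(z_n, \zeta_n)}$: one must carefully distinguish that $\Gamma_{i+1}$ acts by \emph{left} multiplication (as a M\"obius transformation on $\HP$), whereas the horocycle flow acts by \emph{right} multiplication by $T(s_{h,i})$, and exploit commutativity of translations to consolidate them on the right. Once this bookkeeping is in place, identifying $\Gamma$ as a nontrivial conjugate of a translation, recognising $\gamma$ as its fixed point, and invoking the classical description of cusps of $\MMM$ are all short steps.
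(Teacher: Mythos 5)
Your proof is correct, but it takes a genuinely different route from the paper's. The paper's argument is more combinatorial and geometric: it first reduces to a periodic point of the form $(\gamma,r,-1)$ via the tables and the observation that $(\gamma,r,+1)$ and $(\gamma,r,-1)$ lie on the same orbit, then applies Lemma~\ref{lem:via-matrici} along the two sub-arcs of the orbit joining these points to produce a matrix $\Gamma_2\in SL(2,\Z)$, argued to be non-trivial on geometric grounds, with $\Gamma_2(\gamma)=\gamma$ and $\DDD{\Gamma_2(\gamma)}=1$; rationality then follows from the two algebraic conditions $\frac{a\gamma+b}{c\gamma+d}=\gamma$, $|c\gamma+d|=1$. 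You instead exploit Proposition~\ref{prop:return-time} (which the paper's proof never uses) to concatenate the one-step identities into $\Gamma_{(z_n,\zeta_n)}=\Gamma\,\Gamma_{(z_0,\zeta_0)}\,T\bigl(\sum_i s_{h,i}\bigr)$, so that periodicity exhibits $\Gamma\in PSL(2,\Z)$ as a conjugate of the translation $T(s)$ with $s=-\sum_i s_{h,i}>0$. This buys you non-triviality of $\Gamma$ for free (no case analysis on $\eps$ and no geometric argument that the relevant matrix is not the identity), identifies $\Gamma$ as parabolic with fixed point $\Gamma_{(z_0,\zeta_0)}(\infty)=\gamma$, and concludes via the classical fact that parabolic fixed points of the modular group are exactly $\Q\cup\{\infty\}$; as a by-product it re-derives the correspondence with closed horocycles (of period $s$). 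The price is reliance on the explicit matrix form \eqref{map-sl2r-ww-inv} and on the strict negativity of the return times, which the paper states; the paper's route stays at the level of Lemma~\ref{lem:via-matrici} alone but must handle the $\eps$ bookkeeping explicitly and leaves the final rationality computation (essentially your parabolicity argument in coordinates) implicit.
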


\begin{proof}
    By using the map $\WWW$ given in \eqref{eq-map-W}, one realises
    that a closed horocycle on $S\MMM$ which intersects $\CCC$ can be
    associated to a point $(\gamma,r,\eps)$ in $\WW$ with
    $\eps \in \{-1,0\}$. Indeed, if $(\gamma,r,+1) = \WWW(z,\zeta)$ is
    periodic, then also $(\gamma,r,-1)$ must be periodic, since they
    define the same closed horocycle.

\noindent
Then, looking at Table \ref{tab-eps0} it follows that if
$(\gamma,r,0)$ is periodic then either $(\gamma,r)=(1,1)$ or $(2,2)$,
and we are dealing with the fixed point or the period 4 orbit of
Example \ref{esempio-per}, or else it can be reduced to a periodic
point with $\eps=-1$.

\noindent
We are thus left to study a periodic point of the form
$(\gamma,r,-1)$. Studying its orbit and the structure of the
correspondent closed horocycle (see Figure \ref{fig-case-1}), one
readily checks that there exists $k\ge 1$ such that
$\PPP_h^k(\gamma,r,-1) = (\gamma,r,+1)$. Hence by Lemma
\ref{lem:via-matrici} there exists $\Gamma_1 \in SL(2,\Z)$ which
yields $\gamma = \Gamma_1(\gamma)$ and
$r = \DDD{\Gamma_1(\gamma)}\, r$. Moreover, since the orbit of
$(\gamma,r,-1)$ is periodic, there exists $m\ge 1$ such that
$\PPP_h^m(\gamma,r,+1) = (\gamma,r,-1)$, so that $n=m+k$ is the period
of $(\gamma,r,-1)$. Again this implies that there exists
$\Gamma_2 \in SL(2,\Z)$ which yields $\gamma = \Gamma_2(\gamma)$ and
$r = \DDD{\Gamma_2(\gamma)}\, r$.

\noindent
From Tables \ref{tab-eps+1} and \ref{tab-eps-1} it follows that $\Gamma_1$ could also be the identity, but $\Gamma_2$ cannot be such since $\Gamma_2^{-1}(\III_0)$ is the set on which one of the return along the horocycle occurs, after the return corresponding to $(\gamma,r,+1)$. This means that $\gamma$ satisfies
\[
\frac{a\gamma+b}{c\gamma+d} = \gamma\quad \text{and}\quad |c\gamma + d|=1
\]
for $a,b,c,d \in \Z$. Therefore $\gamma \in \Q$.
\end{proof}

\noindent
The following analysis of the relationship between the periodic orbits and the permuted Stern-Brocot tree $\hat{\TTT}$ provides the opposite implication, that if $\gamma \in \Q$ then $(\gamma,r,\eps)$ is periodic for all $r,\eps$.

\subsection{Periodic orbits of $\PPP_h$ and the permuted Stern-Brocot tree} \label{sec:charac-periodic}

We start by studying the orbit of the points $(1,r,-1)\in \WW$ with $r>1$. The case $r=1$ is easy since the corresponding point in $\WW$ is $(1,1,0)$ and has been already discussed in Example \ref{esempio-per}.

\noindent
From the argument in the proof of Proposition \ref{prop:per-points-rat} it follows that there exists $k\ge 1$ such that $\PPP_h^k(1,r,-1)=(1,r,+1)$.  We then apply Table \ref{tab-eps+1} to get $\PPP_h(1,r,+1)=(1,r,-1)$. Hence the points $(1,r,-1)$ are periodic for all $r>1$, and all the points in their orbits correspond to returns to $\CCC$ occurring on sets $\Gamma(\III_0)$, for $\Gamma\in SL(2,\Z)$, in $\{z\in \HP: \Real (z)\le 0\}$. In particular, let $W^+_r$ be the horocycle which is tangent to $\R$ at $1$ and has radius $r>1$. We need to determine all the sets $\Gamma^{-1}(\III_0)$ with $\Gamma\in SL(2,\Z)$ which intersect $W^+_r$ in $\{z\in \HP: \Real (z)\le 0\}$.

\noindent
The easy case is given by the lines $\III_n = R^n(\III_0)$ with $n\le 0$. It is clear that $W^+_r \cap \III_n$ is not empty if and only if $r\ge 1-n$. It turns out that it is more useful to study the intersections with the geodesics with rational end points in the intervals $[n-1,n]$ for $n\le 0$.

\noindent
For a Farey pair $p/q, p'/q' \in \Q^+$ with $p/q< p'/q'$ (see Section \ref{sec:trees}), let
\begin{equation}\label{geod-pq}
\geod\left(\frac pq,\frac{p'}{q'}\right) := \set{ x+iy \in \HP \, :\, \left( x + \frac 12 \frac pq + \frac 12 \frac{p'}{q'}\right)^2 + y^2 = \frac{1}{(2qq')^2} }
\end{equation}
be the geodesic with end points in $-p/q$ and $-p'/q'$. For example $\geod(0/1,1/1)= \JJJ_{-1}$.

\begin{lem}\label{lem:coding-geod-pq}
For a Farey pair $p/q, p'/q' \in \Q^+$ we have
\[
\geod\left(\frac pq,\frac{p'}{q'}\right) = \left(\hat{M}\left(\frac pq \oplus \frac{p'}{q'}\right)\right)^{-1}(\III_0)\, ,
\]
where $\hat{M}(a/b)\in \{L,R\}^*$ is the reverse word associated to the rational number $a/b$ as in Section \ref{sec:sb-perm-tree}, and every symbol in $\hat{M}(a/b)$ is interpreted as a matrix of $SL(2,\Z)$ acting on $\HP$.
\end{lem}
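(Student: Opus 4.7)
The plan is to compute the matrix $\hat{M}(p/q\oplus p'/q')$ explicitly using the Stern-Brocot tree structure and Lemma \ref{lem:matr-perm}, and then to check by a direct substitution that it sends the endpoints of $\geod(p/q,p'/q')$ to those of $\III_0$.

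First I would identify $M(\tau)$ for $\tau := p/q\oplus p'/q' = (p+p')/(q+q')$. Since $p/q,p'/q'$ is a Farey pair, $\tau$ appears for the first time in some set $F_{k+1}\setminus F_k$, with $p/q$ and $p'/q'$ being the pair of consecutive elements of $F_k$ surrounding it. By the recursive definition of $M$ recalled in Section \ref{sec:trees}, these two fractions constitute the columns of $M(\tau)$. A short induction on the depth in the Stern-Brocot tree (base case $M(1/1)=I$, and tracking how right multiplication by $L$ or $R$ updates the columns when moving to the left or right daughter) shows that the larger of the two parents always sits in the left column. Assuming $p/q<p'/q'$, this gives
\[
M\left(\frac pq\oplus\frac{p'}{q'}\right) = \begin{pmatrix} p' & p \\ q' & q \end{pmatrix},
\]
which lies in $SL(2,\Z)$ by the Farey-pair identity $p'q-q'p=1$.

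Applying Lemma \ref{lem:matr-perm} then yields
\[
\hat{M}\left(\frac pq\oplus\frac{p'}{q'}\right) = \begin{pmatrix} q & p \\ q' & p' \end{pmatrix}.
\]
To conclude, it suffices to verify that, viewed as a M\"obius transformation, this matrix sends the two boundary endpoints $-p/q$ and $-p'/q'$ of $\geod(p/q,p'/q')$ to the endpoints $0$ and $\infty$ of $\III_0$. A direct plug-in (again using $p'q-q'p=1$) gives $\hat{M}(-p/q)=0$ and $\hat{M}(-p'/q')=\infty$. Since elements of $PSL(2,\R)$ map geodesics of $\HP$ to geodesics, and a geodesic is determined by its two endpoints on $\partial\HP$, we conclude $\hat{M}(\geod(p/q,p'/q'))=\III_0$, equivalently $\geod(p/q,p'/q')=\hat{M}^{-1}(\III_0)$.

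The only slightly delicate point is the column-ordering convention used in the first step, which is why I would spell out the short induction; everything else reduces to a one-line verification at the boundary and presents no real obstacle.
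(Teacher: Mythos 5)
Your proof is correct, but it takes a genuinely different route from the paper. The paper proves the identity by induction on the level of $m/s=\frac pq\oplus\frac{p'}{q'}$ in the Stern--Brocot tree, working directly with the geodesics: it shows how applying the letters of $\hat{M}(m/s)$ successively carries $\geod(p/q,p'/q')$ onto $\III_0$ (using that adjacent geodesics share an endpoint and the auxiliary identity $\hat{M}(p/q)\,\geod(p/q,p'/q')=\III_{-1}$), and then treats fractions outside $[0,1]$ by a separate translation argument with $R^{|n|}$. You instead compute the matrix once and for all: the columns of $M\bigl(\frac pq\oplus\frac{p'}{q'}\bigr)$ are the two Stern--Brocot parents with the larger one on the left (your short induction on right multiplication by $L$ and $R$ does establish this, and the paper itself invokes exactly the formula $M=\begin{pmatrix} p' & p\\ q' & q\end{pmatrix}$, $\hat{M}=\begin{pmatrix} q & p\\ q' & p'\end{pmatrix}$ later, in the proof of Theorem \ref{thm-periodic}), then Lemma \ref{lem:matr-perm} gives $\hat{M}$, and the verification $\hat{M}(-p/q)=0$, $\hat{M}(-p'/q')=\infty$ via $p'q-q'p=1$ finishes the argument since elements of $PSL(2,\R)$ send geodesics to geodesics and a geodesic is determined by its boundary endpoints. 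Your route is shorter and makes the explicit matrix formula (reused in Theorem \ref{thm-periodic} and Proposition \ref{prop-repar}) transparent from the start; the paper's induction is longer but yields as a by-product the geometric picture of how these geodesics nest and share endpoints, which underlies the tree-of-geodesics correspondence and the algorithm in Appendix \ref{app:algorithm}. The only point you rightly flag as delicate, the column-ordering convention, is exactly the place to spell out the induction (base case $I$ with columns $1/0$ and $0/1$, and the updates $ML$, $MR$ preserving the property that the left column is the larger neighbour); with that included there is no gap, and your argument also covers the boundary pair involving $0/1$ used in the paper's base case.
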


\begin{proof}
By definition of the Stern-Brocot sets, the half-circles $\geod\left(p/q,p'/q'\right)$ are contained between two lines $\III_{n-1}$ and $\III_n$ for an integer $n\le 0$.

\noindent
Let us first consider the case $n=0$, that is $p/q, p'/q' \in [0,1]$. In particular the Farey sum $p/q \oplus p'/q'$ is in $[0,1]$, and for each $m/s \in [0,1]$ with $m,s$ coprime, there is exactly one couple $p/q, p'/q'$ such that $m/s = p/q \oplus p'/q'$. We argue by induction on the level of $m/s$ in the Stern-Brocot tree $\TTT$. The minimum level for $m/s$ is one, and this is the level only of the fraction $1/2= 0/1\oplus 1/1$. In this case the result holds, since
\[
\geod\left(\frac 01,\frac{1}{1}\right) = \JJJ_{-1} = L^{-1}(\III_0)
\]
and $M(1/2) = \hat{M}(1/2) = L$. Note that the geodesic $\geod(0/1,1/1)$ has ends in common with the geodesics $\geod(0/1,1/2)$ and $\geod(1/2,1/1)$, and the last are the only geodesics of the family of geodesics $\geod\left(p/q,p'/q'\right)$ with $p'q-q'p=1$. The two geodesics satisfy
\[
\hat{M}\left(\frac 12 \right) \geod\left(\frac 01,\frac{1}{2}\right) = \geod\left(\frac 01,\frac{1}{1}\right)\quad \text{and}\quad \hat{M}\left(\frac 12 \right) \geod\left(\frac 12,\frac{1}{1}\right) = \III_{-1}\, .
\]
Let us assume now that $m/s$ is in the $(k+1)$-th level of $\TTT$ and the result holds for all fractions on all the previous levels. Then there exist $p/q, p'/q'$ on two different previous levels in $\TTT$ and satisfy $m/s = p/q \oplus p'/q'$. Without loss of generality we assume that $p/q\in \TTT_{\ell}$ and $p'/q'\in \TTT_{\ell'}$ with $\ell > \ell'$, then by the construction of $\TTT$ we have $\ell=k$. It follows that $M(m/s) = M(p/q)R$ since $p/q<m/s<p'<q'$. In addition there exists $p''/q'' \in \TTT$ such that $p/q=p''/q'' \oplus p'/q'$, hence the geodesics $\geod(p/q, p'/q')$ and $\geod(p''/q'', p'/q')$ have one common end point, and the first one lies below the second one in $\HP$. By the inductive assumption we have
\[
\III_0 = \hat{M} \left(\frac{p''}{q''}\oplus \frac{p'}{q'}\right) \geod\left(\frac{p''}{q''},\frac{p'}{q'}\right)= \hat{M} \left(\frac pq \right) \geod\left(\frac{p''}{q''},\frac{p'}{q'}\right)
\]
and
\[
\hat{M} \left(\frac pq \right) \geod\left(\frac{p}{q},\frac{p'}{q'}\right) = \III_{-1}
\]
because $\geod(p/q, p'/q')$ and $\geod(p''/q'', p'/q')$ have their leftmost end point $p'/q'$ in common. This is the analogue of what has been shown above for $\geod(0/1,1/1)$ and $\geod(1/2,1/1)$. Then
\[
\III_0 = R\, \hat{M} \left(\frac pq \right) \geod\left(\frac{p}{q},\frac{p'}{q'}\right) = \hat{M} \left(\frac rs \right) \geod\left(\frac{p}{q},\frac{p'}{q'}\right) = \hat{M}\left(\frac pq \oplus \frac{p'}{q'}\right)  \geod\left(\frac{p}{q},\frac{p'}{q'}\right)
\]
and the statement is proved for the fractions on the $(k+1)$-th level of $\TTT$ which are in $[0,1]$.

\noindent
The case $n\le -1$ follows by noting that if $p/q, p'/q' \in [|n|,|n|+1]$, then
\[
R^{|n|} \geod\left(\frac{p}{q},\frac{p'}{q'}\right) =  \geod\left(\frac{p}{q}-|n|,\frac{p'}{q'}-|n| \right)
\]
with $p/q-|n|,p'/q'-|n| \in [0,1]$. By the previous case we obtain
\[
\hat{M}\left(\frac{p-q|n|}{q} \oplus \frac{p'-q'|n|}{q'}\right) \geod\left(\frac{p}{q}-|n|,\frac{p'}{q'}-|n| \right) = \III_0
\]
hence
\[
\hat{M}\left(\frac{p-q|n|}{q} \oplus \frac{p'-q'|n|}{q'}\right) R^{|n|} \geod\left(\frac{p}{q},\frac{p'}{q'}\right) = \III_0\, .
\]
Since from the structure of $\TTT$ we have $M(m/s) = R^k M(m/s-k)$ for all $m/s \in [k,k+1)$, we have
\[
\hat{M}\left(\frac{p-q|n|}{q} \oplus \frac{p'-q'|n|}{q'}\right) R^{|n|} = \hat{M}\left(\left(\frac pq \oplus \frac{p'}{q'}\right)-|n| \right) R^{|n|} = \hat{M}\left(\frac pq \oplus \frac{p'}{q'}\right)
\]
and the statement is proved.
\end{proof}

\noindent
From the previous lemma it follows that the geodesics $\geod(p/q,p'/q')$ may be arranged in a tree which corresponds to the Stern-Brocot tree $\TTT$ by the bijection
\[
\frac ms = \frac pq \oplus \frac{p'}{q'} \quad \leftrightarrow \quad \geod\left(\frac{p}{q},\frac{p'}{q'}\right)
\]
with the convention that $1/1$ corresponds to $\III_0 = \geod(0/1,1/0)$. If $m/s$ is a daughter of $m'/s'$ in $\TTT$, then the two corresponding geodesics have one end point in common, given by the common parent of $m/s$ and $m'/s'$, and the geodesic corresponding to $m/s$ lies in $\HP$ below that of $m'/s'$.

\noindent
At the same time, the tree of geodesics is in bijection with the permuted Stern-Brocot tree $\hat{\TTT}$, since for $\geod(p/q,p'/q')$ the word $\hat{M}(p/q \oplus p'/q')$ gives the position of $m/s = p/q \oplus p'/q'$ in $\hat{\TTT}$. This relation plays a fundamental role when studying the action of the map $\PPP_h$ thanks to Lemma \ref{lem:via-matrici}.

We now give an elementary result on the intersection of $\geod(p/q,p'/q')$ with the horocycle $W^+_r$ tangent to $\R$ at 1 and of radius $r>1$.

\begin{lem}\label{lem:inters-geod-horo}
For a Farey pair $p/q, p'/q' \in \Q^+$, the horocycle tangent to $\R$ at $\rho>0$ and of radius $r\ge \rho$ intersects $\geod\left(p/q,p'/q'\right)$ if and only if $r \ge (q\rho+p)(q'\rho+p')$.
\end{lem}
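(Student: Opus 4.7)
The approach is a direct computation in Euclidean geometry. By the definition \eqref{geod-pq}, the geodesic $\geod(p/q,p'/q')$ is the upper half of the Euclidean circle $G$ with centre $c_2:=-\tfrac{pq'+p'q}{2qq'}$ on $\R$ and radius $r_2:=\tfrac{1}{2qq'}$, while the horocycle $H$ in the statement is the Euclidean circle of centre $(\rho,r)$ and radius $r$. Since $H$ lies in the closed upper half-plane and touches $\R$ only at $\rho>0$, whereas $G\cap\R=\{-p/q,-p'/q'\}$ consists of non-positive numbers, any point of $H\cap G$ automatically lies in $\HP$. Hence the geodesic meets the horocycle iff the two Euclidean circles $H$ and $G$ meet, i.e.\ iff $|r-r_2|\le d\le r+r_2$, where $d$ is the distance between their centres.

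I would then compute $d^2 = A^2+r^2$ with $A:=\rho-c_2=\rho+\tfrac{pq'+p'q}{2qq'}$. A quick check shows the lower bound is automatic: the Farey relation $p'q-pq'=1$ combined with $p,q'\ge 0$ forces $pq'+p'q\ge 1$, so $A\ge \rho+r_2>r_2$, whence $d^2=A^2+r^2\ge r_2^2+r^2\ge (r-r_2)^2$. Squaring the upper bound $d\le r+r_2$ and simplifying reduces it to the single scalar inequality
\[
r \;\ge\; \frac{A^2-r_2^2}{2\,r_2}\,.
\]

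The concluding step is the factorisation
\[
A\pm r_2 \;=\; \rho+\frac{pq'+p'q\pm 1}{2qq'}\,,
\]
into which I substitute $pq'+p'q-1=2pq'$ and $pq'+p'q+1=2p'q$ (both consequences of $p'q-pq'=1$). This yields
\[
A-r_2 \;=\; \rho+\frac{p}{q} \;=\; \frac{q\rho+p}{q}\,,\qquad A+r_2 \;=\; \rho+\frac{p'}{q'} \;=\; \frac{q'\rho+p'}{q'}\,,
\]
so $A^2-r_2^2=\tfrac{(q\rho+p)(q'\rho+p')}{qq'}$, and since $2r_2=1/(qq')$ the intersection condition becomes $r\ge (q\rho+p)(q'\rho+p')$, as claimed.

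I do not expect a genuine obstacle: the whole argument is two one-line identities combined with the elementary two-circle criterion, the only subtle point being to note that the lower bound $d\ge|r-r_2|$ is automatic, so that the single ``big enough radius'' inequality suffices. An alternative route, less elementary but perhaps conceptually cleaner, would be to invoke Lemma \ref{lem:coding-geod-pq} to transport $\geod(p/q,p'/q')$ onto $\III_0$ by the isometry $\hat M(p/q\oplus p'/q')$, reducing the problem to the elementary fact that a horocycle tangent to $\R$ at $x$ with radius $R$ meets the $y$-axis iff $|x|\le R$, and then use Lemma \ref{lem:matr-perm} together with $p'q-pq'=1$ to identify the columns of $\hat M$ with the Farey pair; the same arithmetic reappears at the end.
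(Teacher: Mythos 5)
Your proposal is correct and follows essentially the same route as the paper: the paper also treats the horocycle and the geodesic as Euclidean circles, writes the external tangency condition $\left(r+\frac{1}{2qq'}\right)^2 = r^2 + \left(\rho+\frac12\left(\frac pq+\frac{p'}{q'}\right)\right)^2$, and uses $p'q-pq'=1$ to reduce it to $r=(q\rho+p)(q'\rho+p')$, with intersection iff $r$ is at least this tangency value. Your additional checks (that the containment bound $d\ge|r-r_2|$ is automatic and that any intersection point lies in $\HP$) are sound and merely make explicit what the paper leaves implicit.
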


\begin{proof}
    We look for an equivalent condition to the existence of solutions
    to the system given by the polynomial equation in \eqref{geod-pq}
    describing $\geod(p/q,p'/q')$ and the polynomial equation
    $(x-\rho)^2+y^2-2ry=0$ describing the horocycle tangent to $\R$ at
    $\rho>0$ and of radius $r\ge \rho$. The two circles have a
    tangency point if and only if
\[
\left( r+\frac{1}{2qq'}\right)^2 = r^2 + \left( \rho + \frac 12 \left(\frac pq + \frac{p'}{q'}\right)\right)^2
\]
which, using that $p'q-pq'=1$ and elementary computations, is
equivalent to $r=(q\rho+p)(q'\rho+p')$. Then the two circles intersect
if and only if the radius $r$ is greater than or equal to the values for
which they are tangent.
\end{proof}

\noindent
We are now ready to state the main result of this section, which provides a precise characterisation of the orbits under $\PPP_h$ of the periodic points $(1,r,-1)$ for all $r\ge 1$ and of all the periodic points of $\PPP_h$.

\begin{defn}\label{def:energy-pq}
Given $p/q\in \Q^+$ with $p,q$ coprime, we call \emph{energy} of $p/q$ the number $\xi(p/q):= pq$.
\end{defn}

\begin{thm}\label{thm-periodic}
For all $r\ge 1$, the point $(1,r,-1)\in \WW$ is periodic for $\PPP_h$ and has period $\per (r)$ given by
\[
    \per (r) = 2\cdot \# \left\{ a,b\in \N\, :\, (a,b)=1, ab< r\right\} +
    \# \left\{ a,b\in \N\, :\, (a,b)=1,\ ab= r\right\}.
\]
In addition, a point $(\gamma, r, \eps)\in \WW$ is periodic if and only if $\gamma\in \Q$, and in this case if $\gamma = a/b$ with $a,b$ coprime, it is in the orbit of the point $(1, rb^2, -1)$.
\end{thm}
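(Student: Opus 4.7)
The plan is to leverage the material already assembled to handle each piece of the statement, splitting the theorem into two stages: (a) compute the period $\per(r)$ of $(1,r,-1)$, and (b) extend the characterisation to a general $(\gamma,r,\eps)$ by reducing it to the point $(1,rb^2,-1)$. For Stage (a), periodicity itself is immediate from the discussion just above the statement: combining Proposition~\ref{prop:closed-w}--Remark~\ref{rem-closed-w} with the suspension isomorphism of Proposition~\ref{mis-inv-horo}(iii), the closed horocycle $\pi_*(W^+(i/(2r),i))\subset S\MMM$ of length $2r$ transports to a periodic $\PPP_h$-orbit through $(1,r,-1)$ when $r>1$, with the degenerate case $r=1$ giving the fixed point $(1,1,0)$ of Example~\ref{esempio-per}.

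For the period count, the same preceding paragraph identifies the successive orbit returns with the intersections of $W^+_r$ (tangent at $1$ of radius $r$) with the geodesics $\Gamma^{-1}(\III_0)$ lying in $\{z\in\HP:\Real(z)\le 0\}$. By Lemma~\ref{lem:coding-geod-pq} these are precisely the $\geod(p/q,p'/q')$ indexed by Farey pairs in $\Q^+\cup\{0/1,1/0\}$, and Lemma~\ref{lem:inters-geod-horo} with $\rho=1$ supplies the criterion that $\geod(p/q,p'/q')$ meets $W^+_r$ iff $r\ge (p+q)(p'+q')$, with two transverse intersections under strict inequality and a single tangent point at equality. I would then introduce the bijection $(p/q,p'/q')\leftrightarrow (a,b):=(p+q,p'+q')$ between these generalised Farey pairs and ordered coprime pairs in $\N^2$: coprimality follows from $q(p'+q')-q'(p+q)=qp'-pq'=1$, while the inverse direction uses Bezout to recover $(p,q,p',q')$ uniquely from $(a,b)$ within the required ranges. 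Because $(p+q)(p'+q')=ab$ under this correspondence, summing the $2\cdot[ab<r]+[ab=r]$ contributions over all coprime pairs produces exactly the formula in the theorem.

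For Stage (b), the direction periodic $\Rightarrow \gamma\in\Q$ is Proposition~\ref{prop:per-points-rat}. For the converse and the orbit identification, given $\gamma=a/b$ in reduced form, Bezout yields $c,d\in\Z$ with $ac+bd=1$, and then
\[
\Gamma:=\begin{pmatrix} c+b & d-a \\ c & d \end{pmatrix}\in PSL(2,\Z)
\]
satisfies $\Gamma(a/b)=1$ together with $\DDD{\Gamma(a/b)}=b^2$ by \eqref{def-def}. Hence $\Gamma$ sends the positive horocycle of radius $r$ tangent at $a/b$ onto $W^+_{rb^2}$; the two horocycles project to the same closed curve on $S\MMM$, so by the suspension isomorphism of Proposition~\ref{mis-inv-horo}(iii) the points $(\gamma,r,\eps)$ and $(1,rb^2,-1)$ lie on the same periodic $\PPP_h$-orbit (the latter being shown in Stage (a) to also contain $(1,rb^2,+1)$, so the value of $\eps$ does not matter).

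The delicate step I expect to work hardest on is certifying the input from the preceding paragraph, namely that the collection of left-half-plane geodesics $\geod(p/q,p'/q')$ enumerates the orbit returns without double-counting. A priori the two transverse intersection points of a single $\geod(p/q,p'/q')$ with $W^+_r$ in $\HP$ could either both represent genuine orbit iterates or be identified by the parabolic stabiliser of $W^+_r$ in $PSL(2,\Z)$; I would verify, either by tracking the orbit through Tables~\ref{tab-eps+1}--\ref{tab-eps0} or by examining that stabiliser action on the geodesic family, that each transverse intersection yields one distinct orbit point and each tangent intersection likewise one, so that the raw intersection count is indeed $\per(r)$. Once this is in hand, the Bezout construction, the Farey-to-coprime-pair bijection, and the tangent equality case are essentially bookkeeping.
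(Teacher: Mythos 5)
Your proposal is sound and, for the period formula, follows essentially the paper's route: identify the $\PPP_h$-iterates of $(1,r,-1)$ with the intersections of $W^+_r$ with the $PSL(2,\Z)$-copies of $\III_0$ in $\{\Real(z)\le 0\}$, apply Lemma \ref{lem:inters-geod-horo} with $\rho=1$, and count over coprime pairs $(a,b)=(p+q,p'+q')$. Where you genuinely diverge is in part two. The paper never needs a Bezout matrix: it first computes, via Lemmas \ref{lem:coding-geod-pq}, \ref{lem:via-matrici} and \ref{lem:matr-perm}, the explicit $\WW$-coordinates of the iterate attached to a Farey pair, namely $\gamma'=(p+q)/(p'+q')$ and $r'=r/(p'+q')^2$, hence the explicit orbit
\[
\OOO(1,r,-1)=\set{\left(\tfrac ab,\tfrac r{b^2},\pm1\right):(a,b)=1,\ ab<r}\cup\set{\left(\tfrac ab,\tfrac r{b^2},0\right):(a,b)=1,\ ab=r},
\]
and then reads off part two: $(a/b,r,\eps)$ lies in $\OOO(1,rb^2,-1)$ because $ab\le rb^2$ is exactly the condition $r\ge\gamma$ defining $\WW$. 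Your alternative — the matrix with rows $(c+b,\,d-a)$ and $(c,\,d)$, where $ac+bd=1$, which is indeed in $PSL(2,\Z)$, sends $a/b$ to $1$ and has deformation factor $b^2$ — is correct and yields part two without the explicit orbit description, at the cost of invoking directly the correspondence between flow orbits and $\PPP_h$-orbits through the section.

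The one incomplete point is exactly the step you flag and postpone: that the intersections enumerate the orbit without double counting. Note that this is most cleanly discharged by the computation you skipped: once each intersection carries the $\WW$-point $\bigl((p+q)/(p'+q'),\,r/(p'+q')^2,\,\pm1\bigr)$, distinctness is automatic, since distinct Farey pairs give distinct coprime pairs $(p+q,p'+q')$, hence distinct reduced first coordinates, while the two transverse intersections on one geodesic differ in $\eps$; and the representative on $\III_0$ with $\theta\in(-\pi,0)$ is unique because the setwise stabiliser of $\III_0$ in $PSL(2,\Z)$ is $\{I,S\}$ and $S$ flips the admissible angle range. So rather than analysing the parabolic stabiliser of $W^+_r$, carry out the coordinate computation (this is the paper's route, and it also feeds your coprime-pair bijection, which is consistent with it). Two further small imprecisions: Lemma \ref{lem:coding-geod-pq} is stated only for finite Farey pairs, so the lines $\III_n$ must be handled separately (as the paper does) or by an explicit convention for pairs involving $1/0$; and the fact that the Farey-pair geodesics together with the $\III_n$ exhaust all copies $\Gamma^{-1}(\III_0)$ in the left half-plane is not contained in that lemma but is a separate elementary observation that should be stated.
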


\begin{proof}
Let us consider a point $(1,r,-1)\in \WW$ with $r\ge 1$. Note that the case $r=1$ has been already discussed in Example \ref{esempio-per}, from which $\per (1)=1$.
Then let $r>1$. We have seen that $(1,r,-1)$ is periodic and that $(1,r,\pm 1)$ are in the same orbit. Moreover, all the other points in the orbit of $(1,r,-1)$ are given by the intersections of $W^+_r$, the horocycle tangent to $\R$ at 1 and of radius $r$, with the sets $\Gamma^{-1}(\III_0)$ for some $\Gamma \in SL(2,\Z)$.
In particular, by Lemma \ref{lem:via-matrici}, if $W^+_r$ intersects $\Gamma^{-1}(\III_0)$ then the point $(\gamma',r',\eps')$, with $\gamma' = \Gamma(1)$ and $r'= \DDD{\Gamma(1)}r$, is in the orbit of $(1,r,-1)$ for $\PPP_h$. From this it follows first of all that a first class of points in the orbit of $(1,r,-1)$ are given by the intersections of $W^+_r$ with $\III_n=R^n(\III_0)$ for $1-r\le n< 0$. If $n=1-r$ there is only one intersection, which gives $\eps'=0$ for the corresponding point, and if $n>1-r$ there are two intersections giving $\eps'=\pm 1$ for the point in the orbit. By Lemma \ref{lem:via-matrici} the sets
\begin{equation} \label{transl-orbit-per}
\left\{ \left(1+|n|,r,\pm 1\right) \, :\, n\in \Z^-\, ,\, 1\le |n|< r-1\right\} \cup \left\{ \left(1+|n|,r,0\right) \, :\, n\in \Z^-\, ,\, |n|= r-1\right\}
\end{equation}
are made of points in the orbit of $(1,r,-1)$. In the formula for the period of $(1,r,-1)$, together with $(1,r,\pm 1)$ they correspond to all the points with $p\in \N$ and $q=1$.

\noindent
The other class of points in the orbit of $(1,r,-1)$ is given by the intersections of $W^+_r$ with the geodesics $\geod(p/q,p'/q')$ defined in \eqref{geod-pq}. It is an elementary observation that, together with $\III_n$, these geodesics cover all the sets $\Gamma^{-1}(\III_0)$ for some $\Gamma \in SL(2,\Z)$ in $\HP \cap \{\Real (z)<0\}$. By Lemma \ref{lem:inters-geod-horo} the horocycle $W^+_r$ intersects $\geod(p/q,p'/q')$ if and only if $r\ge (p+q)(p'+q')$, and by Lemma \ref{lem:coding-geod-pq} the intersection corresponds to the point $(\gamma',r',\eps)$ with
\[
\gamma'= \hat{M}\left(\frac pq\oplus \frac{p'}{q'}\right) (1), \quad r' = \DDD{\hat{M}\left(\frac pq\oplus \frac{p'}{q'}\right)(1)} r,
\]
with $\eps'=0$ if the two circles are tangent, $\eps'=\pm 1$ if there are two intersections. Let $m/s = p/q \oplus p'/q'$. Then $\gamma' = \hat{M}(m/s)(1)$ is the fraction that in $\hat{\TTT}$ occupies the place of $m/s$ in $\TTT$.

\noindent
Using the notations introduced in Section \ref{sec:sb-tree} and Lemma \ref{lem:matr-perm} we have
\[
M\left(\frac pq\oplus \frac{p'}{q'}\right) = \begin{pmatrix} p' & p \\ q' & q
\end{pmatrix} \quad \text{and} \quad \hat{M}\left(\frac pq\oplus \frac{p'}{q'}\right) = \begin{pmatrix} q & p \\ q' & p'
\end{pmatrix}.
\]
In particular, setting $\gamma'=m'/s'$, it follows that
\[
\gamma' = \frac{m'}{s'} = \frac{p+q}{p'+q'}.
\]
Moreover, using Definition \ref{def:energy-pq} of the energy of a positive rational number, we obtain
\begin{equation} \label{energy-orbit-per}
\xi\left(\gamma' \right) = (p+q)(p'+q')\quad \text{and} \quad \DDD{\hat{M}\left(\frac pq\oplus \frac{p'}{q'}\right)(1)} = (p'+q')^{-2} = (s')^{-2}\, .
\end{equation}
We can sum up the consequences of \eqref{transl-orbit-per} and \eqref{energy-orbit-per} by stating that a rational number $a/b$, with $(a,b)=1$, is the first component of a point in the orbit of $(1,r,-1)$ under $\PPP_h$ if and only if $r$ is greater or equal than the energy $\xi(a/b)$, and more precisely the orbit of $(1,r,-1)$ is given by
\[
\set{ \left( \frac ab, \frac{r}{b^2}, \eps\right)\, :\, (a,b)=1\, ,\, ab < r\, ,\, \eps\in \{-1,+1\}} \cup \set{ \left( \frac ab, \frac{r}{b^2}, 0\right)\, :\, (a,b)=1\, ,\, ab = r}.
\]
The first part of the statement of the theorem is proved. Proposition \ref{prop:per-points-rat} and the characterisation of the orbit of $(1,r,-1)$ readily imply the second part of the statement.
\end{proof}

\noindent
The result of Theorem \ref{thm-periodic} may be refined by studying the dynamical ordering of the points in the orbit of $(1,r,-1)$. By looking at the intersections of a horocycle $W^+_r$ with the geodesics $\geod(p/q,p'/q')$ defined in \eqref{geod-pq}, and at the ordering of these intersections described by the relative geometrical positions of the geodesics in $\HP$, we can produce an algorithm to determine the points of the periodic orbit of $(1,r,-1)$ in their dynamical order up to $(1,r,+1)$ by using the permuted Stern-Brocot tree $\hat{\TTT}$. A description of this algorithm is given in Appendix \ref{app:algorithm}.

\begin{rem}
    Adopting an alternative point of view, we can look at the period
    $\per(r)$ of the $\PPP_h$-periodic orbit containing
    $(1,r,-1)$ as the number of intersections of the horocycle
    $W^+(iy,i)=\{ x+iy\, :\, x \in \R \}$, with $y=1/(2r)$, with
    the (geodesic) sides of the triangles forming the Farey
    tassellation of the domain
    $\set{x+iy \in \HP \,:\, 0\leq x \leq 1}$ (see
    Figure~\ref{fig-fareytass} below).

    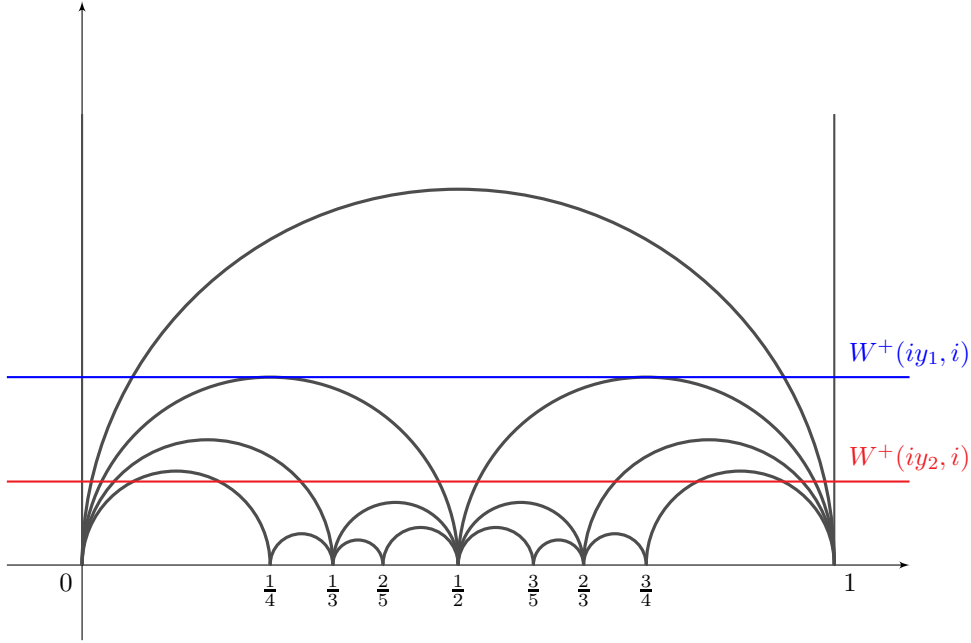
\begin{figure}[h!]
        \pgfmathsetmacro{\myxlow}{0}
\pgfmathsetmacro{\myxhigh}{1}

\begin{tikzpicture}[scale=10]
    \draw[-latex',thin](\myxlow-0.1,0) -- (\myxhigh+0.1,0);
    \draw[-latex',thin](0,-0.1) -- (0,0.75);

    \draw (0,0)   -- (0  ,-0.00) node[below left]{$0$};
    \draw (1/4,0) -- (1/4,-0.00) node[below]{$\frac 14$};
    \draw (1/3,0) -- (1/3,-0.00) node[below]{$\frac 13$};
    \draw (2/5,0) -- (2/5,-0.00) node[below]{$\frac 25$};
    \draw (1/2,0) -- (1/2,-0.00) node[below]{$\frac 12$};
    \draw (3/5,0) -- (3/5,-0.00) node[below]{$\frac 35$};
    \draw (2/3,0) -- (2/3,-0.00) node[below]{$\frac 23$};
    \draw (3/4,0) -- (3/4,-0.00) node[below]{$\frac 34$};
    \draw (1,0)   -- (1  ,-0.00) node[below right]{$1$};

    \draw[thick,farey] (0,0.6) -- (0,0);
    \draw[thick,farey] (1,0.6) -- (1,0);

    \draw[very thick,farey] (1,0) arc(0:180:1/2);

    \draw[very thick,farey] (1/2,0) arc(0:180:1/4);
    \draw[very thick,farey] (1/1,0) arc(0:180:1/4);

    \draw[very thick,farey] (1/3,0) arc(0:180:1/6);
    \draw[very thick,farey] (1/2,0) arc(0:180:1/12);
    \draw[very thick,farey] (2/3,0) arc(0:180:1/12);
    \draw[very thick,farey] (1/1,0) arc(0:180:1/6) ;

    \draw[very thick,farey] (1/4,0) arc(0:180:1/8) ;
    \draw[very thick,farey] (1/3,0) arc(0:180:1/24);
    \draw[very thick,farey] (2/5,0) arc(0:180:1/30);
    \draw[very thick,farey] (1/2,0) arc(0:180:1/20);
    \draw[very thick,farey] (3/5,0) arc(0:180:1/20);
    \draw[very thick,farey] (2/3,0) arc(0:180:1/30);
    \draw[very thick,farey] (3/4,0) arc(0:180:1/24);
    \draw[very thick,farey] (1/1,0) arc(0:180:1/8) ;

    \draw[thick, blue] (-0.1,1/4) -- (1.1,1/4) node[above]{$W^+(iy_1,i)$};
    \draw[thick, Red] (-0.1,1/9) -- (1.1,1/9) node[above]{$W^+(iy_2,i)$};

\end{tikzpicture}
        \caption{Farey tassellation of the domain
          $\set{x+iy \in \HP \,:\, 0\leq x \leq 1}$ up to level $4$ as
          defined in Section~\ref{sec:trees}. We also show two
          horizontal horocycles $W^+(iy_k,i)$ with $y=1/(2r_k)$, $k=1,2$: the blue horocycle has $r_1=2$ and is
          tangent to some of the geodesic sides of the triangles of
          the Farey tassellation; the red horocycle has
          $4<r_2<5$.}\label{fig-fareytass}
    \end{figure}

\noindent
In particular, the horocycle $W^+(iy,i)$ intersects the geodesic joining a Farey pair $p/q, p'/q' \in [0,1]$, with $p/q < p'/q'$, if and only if $y\leq 1/(2qq')$. Note moreover that given a Farey pair $p/q,\, p'/q' \in [0,1]$, the symmetric pair $(q-p)/q,\, (q'-p')/q'$ is in $[0,1]$ and yields a geodesic side of equal ``height" ($=1/(2qq')$), and there are no other pairs with this property. By the way, this symmetry accounts for the corresponding symmetry of the walks associated to the dynamics of the periodic points of $\PPP_h$ on the permuted Stern-Brocot tree (see Appendix \ref{app:algorithm}).
The formula for the period can thus be rewritten as
\[
    \per(r) = 2\cdot \# \left\{ \frac pq,\, \frac{p'}{q'} \, \text{Farey pairs in} \; [0,1]\, :\, qq'< r\right\} + \# \left\{ \frac pq,\, \frac{p'}{q'} \, \text{Farey pairs in} \; [0,1]\, :\, qq'= r\right\}.
\]
To further clarify the connection with the construction made in the proof of Theorem \ref{thm-periodic}, we observe that the projection on $S\MMM$ of the geodesic joining a Farey pair $p/q, p'/q'$, via RS, is the geodesic joining $P/Q, P'/Q'$ with $P=q-p$ and $Q=p$ (and similar primed relations), so that $(P+Q)(P'+Q')=qq'$.

\noindent
Finally, this picture would also provide a sort of duality between periodic horocycles and scattering geodesics (i.e. vertical geodesics landing at rational points),  in the sense that, in order to know the corresponding paths on the tree, in the first case we have to look at the intersections (of the horocycle) with Farey triangles (arcs of geodesics), while in the second case we have to look at the intersections (of the scattering geodesics) with Ford circles (horocycles).
\end{rem}

\subsection{Distribution of periodic orbits} \label{sec:equid}
According to Theorem \ref{thm-periodic}, the periodic orbits of
$\PPP_h$ are nothing more than the orbits of the one-parameter family
of points $\{(1,R,-1)\}_{R\in (1,\infty)}$ in $\WW$. Denoting by
$\OOO(1,R,-1)$ the orbit of $(1,R,-1)$, which consists of $\per(R)$
points, we first prove the following proposition.

\begin{prop}\label{prop:asymp-per} The period $\per(R)$ of the $\PPP_h$-periodic orbits $\OOO(1,R,-1)$  satisfies
\[
\per(R) = \frac{12}{\pi^2}\, R\, \log\, R + O(R) , \quad R\to \infty .
\]
\end{prop}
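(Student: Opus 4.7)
The plan is to reduce the problem to a classical counting estimate via the combinatorial formula in Theorem~\ref{thm-periodic}. Writing
\[
\per(R) = 2\, A(R) + B(R),
\]
where $A(R) := \#\set{(a,b)\in \N^2 : (a,b)=1,\ ab < R}$ and $B(R) := \#\set{(a,b)\in \N^2 : (a,b)=1,\ ab = R}$, the second term is bounded by $2^{\omega(R)}$, with $\omega(R)$ the number of distinct prime factors of $R$. Since $2^{\omega(R)}=O(R^\eta)$ for every $\eta>0$, the contribution of $B(R)$ is absorbed in the $O(R)$ remainder, so the whole task is to estimate $A(R)$.

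Next I would perform a standard M\"obius inversion on the coprimality condition. Using $\sum_{d\,|\,(a,b)} \mu(d) = 1$ if $(a,b)=1$ and $0$ otherwise, together with the substitution $a=d a'$, $b=d b'$, one obtains
\[
A(R) = \sum_{d=1}^{\lfloor\sqrt{R}\rfloor} \mu(d)\, D(R/d^2),
\]
where $D(x) := \#\set{(a',b')\in \N^2 : a'b'\le x} = \sum_{n\le x} d(n)$ is the Dirichlet divisor summatory function.

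Now I would plug in the classical Dirichlet asymptotic $D(x)=x\log x + (2\gamma-1)x + O(\sqrt{x})$ and split the sum into three explicit pieces plus an error:
\[
A(R) = R\log R \sum_{d\le \sqrt{R}}\frac{\mu(d)}{d^2} \;-\; 2R\sum_{d\le \sqrt{R}}\frac{\mu(d)\log d}{d^2} \;+\; (2\gamma-1)R \sum_{d\le \sqrt{R}}\frac{\mu(d)}{d^2} \;+\; O\!\left(\sqrt{R}\sum_{d\le \sqrt{R}}\frac{1}{d}\right).
\]
Using $\sum_{d\ge 1}\mu(d)/d^2 = 1/\zeta(2)=6/\pi^2$ and estimating the tail as $O(R^{-1/2})$, the first piece yields $(6/\pi^2)\,R\log R + O(\sqrt{R}\log R)$. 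The second and third pieces converge absolutely (the first being $-2R\,\zeta'(2)/\zeta(2)^2 + O(\sqrt{R}\log R)$) and are $O(R)$; the explicit error term is $O(\sqrt{R}\log R) = O(R)$. Combining these estimates gives $A(R) = \tfrac{6}{\pi^2}\,R\log R + O(R)$, and hence
\[
\per(R) = 2A(R) + O(R^\eta) = \tfrac{12}{\pi^2}\, R\log R + O(R),
\]
as claimed.

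The argument is a routine application of the hyperbola method / M\"obius inversion, so there is no real obstacle — only the bookkeeping of error terms. The key observation making everything work is that the $O(\sqrt{x})$ error in Dirichlet's theorem, after being summed against $\mu(d)/d$ over $d\le\sqrt{R}$, produces only $O(\sqrt{R}\log R)$, which is comfortably within the claimed $O(R)$ remainder.
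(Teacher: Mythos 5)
Your proof is correct, but it takes a genuinely different route from the paper. You eliminate the coprimality condition by M\"obius inversion over $d\mid(a,b)$ and then feed in Dirichlet's divisor asymptotic $\sum_{n\le x}d(n)=x\log x+(2\gamma-1)x+O(\sqrt{x})$, summing against $\mu(d)/d^2$ (the hyperbola method); the paper instead fixes the product $n=ab$ and uses the identity $\#\{(a,b):(a,b)=1,\ ab=n\}=2^{\omega(n)}=\sum_{d\mid n}\mu^2(d)$, so that after swapping sums the count becomes $\sum_{n\le R}\mu^2(n)\lfloor R/n\rfloor$, handled by the elementary estimate $\sum_{n\le N}\mu^2(n)/n=\tfrac{6}{\pi^2}\log N+O(1)$. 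Your computation buys slightly more than the statement asks: it produces a secondary term $cR$ with explicit constant $c=\tfrac{6}{\pi^2}(2\gamma-1)-2\zeta'(2)/\zeta(2)^2$ and an error of order $\sqrt{R}\log R$, whereas the paper's bookkeeping stops at $O(R)$. The paper's shorter computation has the side benefit that the same sum $\sum_{n\le N}\sum_{d\mid n}\mu^2(d)=\tfrac{6}{\pi^2}N\log N+O(N)$ is reused verbatim in the proof of Theorem~\ref{equid-perorb} to control the error coming from the Legendre totient estimate \eqref{surya-eq}, which your route would not furnish directly. Two harmless bookkeeping points in your write-up: the strict inequality $ab<R$ versus the $\le$ in your $D(x)$ matters only when $R/d^2$ is an integer and changes each term by at most a divisor function, hence $O(R^{1/2+\eta})$ in total; and $B(R)$ is nonzero only for integer $R$, which is exactly when your bound $2^{\omega(R)}$ applies.
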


\begin{proof}
By the formula for the period $\per(R)$ of the point $(1,R,-1)$ given in Theorem \ref{thm-periodic}, we have to estimate the asymptotic behaviour of the cardinality of the set $\Pi(R):= \set{a,b \in \N\, :\, (a,b)=1,\, ab\le R}$,
that is
\[
\# \Pi(R) =  \sum_{n=1}^R\, \sum_{(a,b)=1, \, ab=n}\, 1
\]
Now, the inner sum counts the number of ways in which one can split the set of prime numbers dividing $n$ into two disjoint sets. This in turn is equal to the number of square-free divisors of $n$. It is a standard result in number theory (see e.g. \cite{apostol}) that  is obtained by summing over the divisors on $n$, denoted $d|n$, the square of the M\"obius function $\mu(\cdot)$, that is $\mu^2(d)=1$ if $d|n$ and it is square-free, and $\mu^2(d)=0$ if $d|n$ but is not square-free. Therefore, using standard techniques in number theory, we can write
\[
\begin{aligned}
\# \Pi(R) = & \sum_{n=1}^R\,  \sum_{d|n}\, \mu^2(d)  = \sum_{n=1}^R\, \mu^2(n)\, \left\lfloor \frac Rn \right\rfloor = \sum_{n=1}^R\, \mu^2(n)\, \frac Rn + O\left( \sum_{n=1}^R\, \mu^2(n)\right) = \\[0.3cm]
= &  \sum_{n=1}^R\, \mu^2(n)\, \frac Rn + O(R) =  \frac{6}{\pi^2}\, R\, \log R + O(R).
\end{aligned}
\]
Since $\per(R) = 2\, \# \Pi(R) + O(R)$, the proof is finished.
\end{proof}

\noindent
Furthermore, a more refined number-theoretic argument allows us to conclude that the periodic orbits are equidistributed with respect to the $\PPP_h$-invariant measure $\nu$. Although this is not surprising, after the results in \cite{sarnak,stromberg,ath-che}, we believe it is interesting as a further case of equidistribution of the periodic orbits for a dynamical system with infinite invariant measure (see \cite{heersink}). In particular, in the infinite measure settings, one expects that the equidistribution holds for a family of measures not uniformly supported on the periodic orbits. We state our result as in the classical Hopf Ratio Ergodic Theorem, that is by looking at the ratio of the distribution into two different subsets, but the proof contains also the correct speed of convergence (see \eqref{res}), which is slower than the cardinality of the periodic orbits.

\begin{thm}\label{equid-perorb}
Let $\nu$ be the $\PPP_h$-invariant measure on $\WW$ with density $k(\gamma,r,\eps)=r^{-2}$. Then for all $A,B \subseteq \WW$ measurable sets with $0<\nu(A),\nu(B)<\infty$, we have
\[
\lim_{R\to \infty}\, \frac{\sum_{(\gamma,r,\eps)\in \OOO(1,R,-1)} \, \chi_{_A}(\gamma,r,\eps)}{\sum_{(\gamma,r,\eps)\in \OOO(1,R,-1)} \, \chi_{_B}(\gamma,r,\eps)} = \frac{\nu(A)}{\nu(B)}\, .
\]
\end{thm}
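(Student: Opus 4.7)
The plan is to reduce the equidistribution statement to a counting problem for coprime pairs that can be handled by standard number-theoretic estimates, and then to upgrade from rectangles to arbitrary measurable sets by a regularity argument.

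\textbf{Step 1: Orbit parametrisation.} By Theorem~\ref{thm-periodic}, the orbit $\OOO(1,R,-1)$ consists exactly of the points
\[
\set{\left(\tfrac{a}{b},\tfrac{R}{b^{2}},\eps\right) : (a,b)=1,\; ab\le R,\; \eps\in\{-1,+1\} \text{ if } ab<R,\; \eps=0 \text{ if } ab=R}.
\]
Observe that the constraint $ab\le R$ is equivalent to $\tfrac{a}{b}\le \tfrac{R}{b^{2}}$, which is the defining condition $\gamma\le r$ of $\WW$, so that the orbit is effectively parametrised by all coprime pairs $(a,b)$ with $(a/b,R/b^{2})\in\WW$.

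\textbf{Step 2: Counting on rectangles.} Fix $\eps\in\{-1,+1\}$ and a rectangle $A_{\eps}=[\gamma_{1},\gamma_{2}]\times[r_{1},r_{2}]\times\{\eps\}\subset\WW$ with $0<\gamma_{1}<\gamma_{2}$ and $\gamma_{2}\le r_{1}<r_{2}$. Then
\[
N_{R}(A_{\eps}) := \sum_{(\gamma,r,\eps')\in\OOO(1,R,-1)} \chi_{A_{\eps}}(\gamma,r,\eps') = \sum_{\sqrt{R/r_{2}}\le b\le\sqrt{R/r_{1}}} \#\set{a\in[\gamma_{1}b,\gamma_{2}b] : (a,b)=1}
\]
(up to finitely many boundary points contributing at most $O(1)$). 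The inner sum is a Jordan-totient--type count: a standard sieve using Möbius inversion yields
\[
\#\set{a\in[\gamma_{1}b,\gamma_{2}b] : (a,b)=1} = (\gamma_{2}-\gamma_{1})\,\phi(b) + O(2^{\omega(b)}),
\]
where $\omega(b)$ denotes the number of distinct prime divisors of $b$.

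\textbf{Step 3: Asymptotics via Euler totient.} Using the classical estimate $\sum_{b\le N}\phi(b) = \tfrac{3}{\pi^{2}}N^{2} + O(N\log N)$ together with $\sum_{b\le N}2^{\omega(b)} = O(N\log N)$, summation over $b\in[\sqrt{R/r_{2}},\sqrt{R/r_{1}}]$ gives
\[
N_{R}(A_{\eps}) = \frac{3(\gamma_{2}-\gamma_{1})}{\pi^{2}}\left(\frac{R}{r_{1}}-\frac{R}{r_{2}}\right) + O\!\left(R^{1/2}\log R\right) = \frac{3R}{\pi^{2}}\,\nu(A_{\eps}) + O\!\left(R^{1/2}\log R\right),
\]
because $\nu(A_{\eps})=(\gamma_{2}-\gamma_{1})(r_{1}^{-1}-r_{2}^{-1})$. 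Summing over $\eps=\pm 1$, a general ``two-sided'' rectangle $A$ satisfies
\begin{equation}\label{eq:rect-asymp}
N_{R}(A) = \frac{6R}{\pi^{2}}\,\nu(A) + O\!\left(R^{1/2}\log R\right).
\end{equation}
This is the quantitative form of equidistribution; note that the error term is of lower order than the normalising factor $\per(R)\sim\tfrac{12}{\pi^{2}}R\log R$ given by Proposition~\ref{prop:asymp-per}, which is consistent with the Hopf ratio formulation.

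\textbf{Step 4: Extension to measurable sets.} Formula \eqref{eq:rect-asymp} extends from rectangles to finite disjoint unions of rectangles by additivity, and then to any measurable $A\subset\WW$ of finite $\nu$-measure by a standard inner/outer approximation: given $\delta>0$, choose rectangle unions $A^{-}\subseteq A\subseteq A^{+}$ with $\nu(A^{+}\setminus A^{-})<\delta$ (possible since $\nu$ is absolutely continuous with respect to Lebesgue measure), apply \eqref{eq:rect-asymp} to $A^{\pm}$, sandwich $N_{R}(A)$ between $N_{R}(A^{-})$ and $N_{R}(A^{+})$, and let $R\to\infty$ followed by $\delta\to 0$. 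This yields $N_{R}(A) = \tfrac{6R}{\pi^{2}}\nu(A) + o(R)$ for every $A$ with $0<\nu(A)<\infty$.

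\textbf{Step 5: Ratio.} Taking the quotient of the asymptotics for $A$ and $B$ proves the stated limit. The main obstacle in this plan is Step~3: the Euler-totient estimate with a sharp enough error. The count in Step~2 is a weighted sum of $\phi(b)$ on a dyadic-like window $[\sqrt{R/r_{2}},\sqrt{R/r_{1}}]$, and extracting the leading constant $3/\pi^{2}$ (which eventually combines with the two $\eps$-branches to give the ratio that cancels) is the delicate part; everything else is essentially bookkeeping or soft topological approximation.
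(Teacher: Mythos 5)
Your Steps 1--3 are correct and essentially reproduce the paper's treatment of its first case: your M\"obius-sieve count of $a\in[\gamma_1 b,\gamma_2 b]$ coprime to $b$ is the same estimate the paper quotes for the Legendre totient, $\phi(x,n)=\tfrac{x}{n}\varphi(n)+O\bigl(2^{\omega(n)}\bigr)$, and the summation over $b$ gives the correct quantitative statement $N_R(A)=\tfrac{6}{\pi^2}R\,\nu(A)+O(\sqrt{R}\log R)$ for rectangles with $\gamma_2\le r_1$ (the paper gets $3/\pi^2$ only because it counts the $\eps=+1$ copies; this is irrelevant for the ratio, as is your harmless miscount of boundary points, which contribute $O(\sqrt R)$ rather than $O(1)$). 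The genuine gap is Step 4. The theorem concerns arbitrary measurable $A\subseteq\WW$ of finite positive measure, and such sets may touch the diagonal $\{r=\gamma\}$, e.g.\ $A=\{(\gamma,r,\eps)\in\WW\,:\,1<r<2\}$. Your asymptotics are proved only for rectangles satisfying $\gamma_2\le r_1$, and a finite union of such rectangles can never \emph{contain} a set of the above type: a rectangle with $\gamma_2\le r_1$ meets the strip $\{\gamma\le r<\gamma+\epsilon\}$ in a set whose $\gamma$-projection shrinks to a point as $\epsilon\to 0$, so the outer approximation $A\subseteq A^+$ simply does not exist; and if you allow rectangles crossing the diagonal, the constraint $ab\le R$ truncates the inner $a$-range at $R/b$ and the formula of Step 3 no longer applies to them. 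Thus the upper-bound half of your sandwich is unproved exactly where the condition $r\ge\gamma$ (equivalently $ab\le R$) is active. This is precisely what the paper's proof spends its second half on: it computes the count for trapezoids ($s\le u<v=t$) using $\phi(R/b,b)$ together with $\sum_{n\le N}\varphi(n)/n^2=\tfrac{6}{\pi^2}\log N+O(1)$, and only then passes to general measurable sets.

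The gap is fixable without the full trapezoid computation: for the strip $E_\eta=\{(\gamma,r,\eps)\in\WW: u<r<v,\ \gamma\le r<(1+\eta)\gamma\}$ the orbit points correspond to coprime pairs with $\sqrt{R/v}<b<\sqrt{R/u}$ and $R/(1+\eta)<ab\le R$, hence number at most $\sum_b\bigl(\tfrac{\eta R}{(1+\eta)b}+1\bigr)=O(\eta R)+O(\sqrt R)$, while $\nu(E_\eta)=O(\eta)$; combining this crude bound with your rectangle asymptotics on $A\cap\{r\ge(1+\eta)\gamma\}$ (plus a truncation in $r$ to a bounded window) closes the upper bound and recovers $N_R(A)=\tfrac{6}{\pi^2}R\,\nu(A)+o(R)$. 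Note finally that your closing remark points at the wrong difficulty: the totient estimates of Step 3 are standard, whereas the ``soft topological approximation'' you dismiss is where the diagonal, and hence the real work of the paper's proof, hides.
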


\begin{proof}
We know from Theorem \ref{thm-periodic} that
\[
\OOO(1,R,-1) = \set{\left(\frac ab, \frac{R}{b^2}, \pm 1\right) \, :\, (a,b)=1\, ,\, ab<R} \cup \set{\left(\frac ab, \frac{R}{b^2}, 0\right) \, :\, (a,b)=1\, ,\, ab=R}\, .
\]
We study the distribution of the points of the orbits $\OOO(1,R,-1)$ as $R\to \infty$ for a set of the form
\begin{equation}\label{insieme-A}
A = \set{ \left(\gamma, r, \eps\right) \in \WW\, :\, s<\gamma <t\, ,\, u<r<v\, ,\, \eps\not= -1}
\end{equation}
with $s,t,u,v\in \R^+$. The choice $\eps\not=-1$ in $A$ does not reduce the generality of the result as it is evident from the structure of points in $\OOO(1,R,-1)$ and the fact that the set $\WW \cap \set{\eps=0}$ has vanishing $\nu$-measure. Moreover if the result holds for all open rectangles in $(\gamma,r)$ it holds for all measurable sets. Finally, it is enough to prove the result for $0<s<t< 1$, because if a point with $\gamma=a/b$ is in $\OOO(1,R,-1)$ then also a point with $\gamma=b/a$ is in $\OOO(1,R,-1)$, and in the $r$-components of the two points the roles of $a$ and $b$ are exchanged. Hence any set in $\WW$ can be split in two subsets, one with $\gamma$-component in $(0,1)$ and the other with $\gamma$-component in $(1,\infty)$, and it is enough to prove the result for one of these.

\noindent
 Let then $s,t,u,v\in \R^+$ with $0<s<t\le 1$ and $A$ as in \eqref{insieme-A}. The set
\[
\Pi_A(R) := \set{\frac ab\, :\, (a,b)=1\, ,\, ab\le R\, ,\, s<\frac ab <t\, ,\, u<\frac R{b^2} < v}
\]
characterizes the $\gamma$-component of the points in $A\cap \OOO(1,R,-1)$, so that
\[
\sum_{(\gamma,r,\eps)\in \OOO(1,R,-1)} \, \chi_{_A}(\gamma,r,\eps) = \#\, \Pi_A(R).
\]
First of all, $\Pi_A(R)$ can be written as
\[
\Pi_A(R) = \set{\frac ab\, :\, (a,b)=1\, ,\, ab\le R\, ,\, sb<a <bt\, ,\, \sqrt{\frac Rv}<b < \sqrt{\frac Ru}}
\]
so that the order of $s,t,u,v$ in $\R^+$ changes the description of the set. One realizes that it is enough to consider the cases
\[
s< t = u < v \quad \text{and} \quad s \le u < v = t \, .
\]
as the sets satisfying any of the other possibilities can be reduced to the union of sets falling in one of these two cases. Anyway the computations are similar and the result follows by applying the same ideas.

\noindent
We are now ready to study the asymptotic behaviour of $\# (A\cap \OOO(1,R,-1))$ as $R\to \infty$ for $A$ defined as in \eqref{insieme-A} with $0<s< t = u < v$ and $t<1$. Note that $A$ is a rectangle in $\WW$ as the condition $r\ge \gamma$ is satisfied for all points in $A$. Hence
\begin{equation}\label{nu(A)}
\nu(A) = \int_s^t \left( \int_u^v\, r^{-2}\, dr \right) \, d\gamma = (t-s)\left(\frac 1u-\frac 1v\right)\, .
\end{equation}
We have
\[
\#\, \Pi_A(R) = \sum_{\sqrt{R/v}\leq b\leq \sqrt{R/u}}\, \sum_{ \substack{(a,b)=1\\ bs\leq  a \leq bt}}\, 1 \, .
\]
The inner sum can be expressed in terms  of the \emph{Legendre totient function}
$\phi(x,n)$, a generalisation of the Euler totient function which counts the number of positive integers less or equal than $x$ which are prime to $n$. We get
\begin{equation}\label{Pi_A(R)}
\#\, \Pi_A(R) = \sum_{\sqrt{R/v}\leq b\leq \sqrt{R/u}}\, \left( \phi(bt,b) - \phi(bs,b) \right)\, .
\end{equation}
The Legendre totient function has been studied less extensively than the Euler function. We refer the reader to \cite{surya}  where estimates of $\phi(x,n)$ are given in terms of $\phi(n)$, such as
\begin{equation}\label{surya-eq}
\phi(x,n) = \frac xn\, \varphi(n) + O\left(\sum_{d|n}\, \mu^2(d) \right)\, .
\end{equation}
Moreover, as shown in the proof of Proposition \ref{prop:asymp-per}, we have
\[
\sum_{n=1}^N\, \left(\sum_{d|n}\, \mu^2(d) \right) = \frac{6}{\pi^2}\, N\, \log N + O(N) , \quad N\to \infty
\]
Using this estimate in \eqref{surya-eq} and then inserting the result in \eqref{Pi_A(R)} we get
\[
\#\, \Pi_A(R) = (t-s)\, \sum_{\sqrt{R/v}\leq b\leq \sqrt{R/u}}\,  \varphi(b)  + O\left(\sqrt{R}\, \log R\right)\, .
\]
On the other hand, a classical result in number theory (see \cite{apostol}) states that:
\begin{equation}\label{euler-1}
\sum_{n=1}^N\, \varphi(n) = \frac{3}{\pi^2}\, N^2 + O(N\, \log N)
\end{equation}
and therefore
\[
\#\, \Pi_A(R) = \frac{3}{\pi^2}\, R\, (t-s) \left(\frac 1u-\frac 1v\right) + O\left(\sqrt{R}\, \log R\right)\, .
\]
Comparing with \eqref{nu(A)} have thus proved that in this first case
\begin{equation} \label{res}
\sum_{(\gamma,r,\eps)\in \OOO(1,R,-1)} \, \chi_{_A}(\gamma,r,\eps) = \frac{3}{\pi^2}\, R\, \nu(A) + O\left(\sqrt{R}\, \log R\right)\, .
\end{equation}

\noindent
Let us now study the asymptotic behaviour of $\# (A\cap \OOO(1,R,-1))$ as $R\to \infty$ for the second case, namely for a set $A$ defined as in \eqref{insieme-A} with $0<s\le u <v = t$ and $t<1$, which is a trapezoid in $\WW$, and
\begin{equation}\label{nu(A)-2}
\nu(A) = \int_u^v \left( \int_s^r\, r^{-2}\, d\gamma \right) \, dr = \log \frac vu -s \left(\frac 1u-\frac 1v\right)\, .
\end{equation}
In this case we find
\[
\#\, \Pi_A(R) =   \sum_{ \sqrt{R/v}\leq b\leq \sqrt{R/u} }\, \sum_{ \substack{ (a,b)=1 \\ bs \leq  a \leq R/b }}\, 1
\]
so that using again the Legendre totien function and \eqref{surya-eq} we get
\[
\#\, \Pi_A(R) = \sum_{ \sqrt{R/v}\leq b\leq \sqrt{R/u} }\,\left(\phi\left(\frac Rb, b\right) - \phi(bs,b) \right) = \sum_{ \sqrt{R/v}\leq b\leq \sqrt{R/u} }\,\left(\frac R{b^2}\, \varphi(b) - s\, \varphi(b) \right) + O\left(\sqrt{R}\, \log R\right)\, .
\]
Using \eqref{euler-1} along with \cite{apostol},
\[
\sum_{n=1}^N\, \frac{\varphi(n)}{n^2} = \frac{6}{\pi^2}\, \log N + O(1)
\]
we obtain
\[
\#\, \Pi_A(R) = \frac{3}{\pi^2}\, R\, \left( \log \frac Ru - \log \frac Rv\right) - \frac{3}{\pi^2}\, R\, s\,  \left(\frac 1u-\frac 1v\right) + O\left(\sqrt{R}\, \log R\right)\, .
\]
Comparing with \eqref{nu(A)-2} we see that  \eqref{res} holds also in this case.

\noindent
In conclusion, we have proved that all the sets $A$ as in \eqref{insieme-A} satisfy \eqref{res}. This immediately implies the statement of the theorem for all couples of sets $A,B$ of this form. The extension to all measurable sets with finite $\nu$-measure is straighforward.
\end{proof}

\section{Non-periodic orbits} \label{sec:non-periodic}

In this section we study the map $\PPP_h$ on non-periodic orbits. In particular we introduce a ``time leap'' for $\PPP_h$ to rewrite it as a map from $X:=([1,\infty) \times [1,\infty) \times \{0,+1\}) \cap \WW$ to itself. We need the following result.

\begin{prop} \label{prop-repar} For the map $\PPP_h:\WW \to \WW$ we
    have:
    \begin{enumerate}
      \item for each $(\gamma,r,\eps)\in \WW$ with $\gamma \in (0,1)$ and $\eps\not= -1$, there exists $n_1(\gamma,r)$ such that $(\gamma', r', \eps') = \PPP_h^{n_1(\gamma,r)}(\gamma,r,\eps)$ satisfies $\gamma' \in [1,\infty)$;

      \item for each $(\gamma,r,\eps)\in \WW$ with $\eps= -1$, there exists $n_2(\gamma,r)$ such that $\PPP_h^{n_2(\gamma,r)}(\gamma,r,-1) = (\gamma, r, +1)$, and
        \[
            \begin{aligned}
                n_2(\gamma,r) &=  \, 2\cdot \# \left\{ a,b \in \N\, :\, (a,b)=1, (q\gamma +p)(q'\gamma +p')< r\right\} + \\ &~\qquad\qquad + \# \left\{ a,b \in \N\, :\, (a,b)=1, (q\gamma +p)(q'\gamma +p')= r\right\} -1
            \end{aligned}
        \]
        where
        \[
            M\left( \frac ab \right)= \begin{pmatrix} q & p \\ q' & p' \end{pmatrix};
        \]
      \item for each $(\gamma,r,\eps)\in \WW$ with $\gamma\not\in \N$ and $\eps\not= -1$, we have $\PPP_h^{\lfloor \gamma \rfloor}(\gamma,r,\eps)= (\gamma - \lfloor \gamma \rfloor, r, \eps)$;

      \item for each $(\gamma,r,\eps)\in \WW$ with $\gamma\in \N$, $\gamma\ge 2$ and $\eps\not= -1$, we have $\PPP_h^{(\gamma-1)}(\gamma,r,\eps) = (1,r,+1)$.
    \end{enumerate}
\end{prop}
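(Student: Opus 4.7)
Statements (i), (iii), (iv) are established by direct iteration of the rules in Tables~\ref{tab-eps+1} and \ref{tab-eps0}. For (iii) and (iv) with $\eps=+1$ and $\gamma>1$, the first row of Table~\ref{tab-eps+1} gives $\PPP_h(\gamma,r,+1)=(\gamma-1,r,+1)$; iterating this rule $\lfloor\gamma\rfloor$ times proves (iii), and iterating $\gamma-1$ times for $\gamma\in\N$ with $\gamma\ge 2$ gives (iv). The $\eps=0$ case is handled by the observation that the first application of $\PPP_h$ sends $\eps=0$ to $\eps=+1$ (via the first row of Table~\ref{tab-eps0}) without altering the pair $(\gamma-1,r)$, after which the $\eps=+1$ analysis applies. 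For (i) with $\gamma\in(0,1)$, Table~\ref{tab-eps+1} shows that each iteration either finishes the process (if $r\ge 1-\gamma$, giving $\gamma'=1/(1-\gamma)\ge 1$ in one step) or applies the Farey map $\gamma\mapsto\gamma/(1-\gamma)$ while multiplying $r$ by $(1-\gamma)^{-2}>1$. Finiteness follows either by monotonicity ($r$ strictly grows and $1-\gamma$ strictly shrinks, so the ``finish'' branch is reached in boundedly many steps) or symbolically, since in the non-finishing branch the $\{L,R\}$-coding $M(\gamma)$ is shifted by one letter $L$, and for $\gamma>0$ the coding has a first $R$ in finite position.

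The heart of the proof is statement (ii). Fix $(\gamma,r,-1)\in\WW$ and let $W^+\subset\HP$ denote the positive horocycle tangent to $\R$ at $\gamma$ with radius $r$; since $r>\gamma$, it meets $\III_0$ in two points, the lower of which represents $(\gamma,r,-1)$ and the upper $(\gamma,r,+1)$. The positive horocycle flow parametrizes $W^+$ bijectively by flow time, and backward flow from the lower intersection traverses the ``long arc'' of $W^+$---the arc lying in $\{\Real z\le 0\}$ and passing through the leftmost point $(\gamma-r,r)$---before arriving at the upper intersection, by the same geometric argument used in the proof of Proposition~\ref{prop:per-points-rat}. Since this arc is a compact subset of $\HP$ bounded away from $\partial\HP$, it meets only finitely many edges of the Farey tessellation, giving the existence of $n_2$.

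To obtain the explicit formula, I enumerate the Farey edges crossed on the long arc. Every matrix $\Gamma\in PSL(2,\Z)$ whose edge $\Gamma^{-1}(\III_0)$ has both endpoints in $\R^-\cup\{-\infty\}$ admits, modulo the stabilizer $\{I,S\}$ of $\III_0$, a unique representative with all non-negative entries, and this representative equals $M(a/b)$ for a unique $a/b\in\Q^+$ with $(a,b)=1$. Writing $M(a/b)=\begin{pmatrix}q & p\\ q' & p'\end{pmatrix}$ and inverting directly shows that the endpoints of $M(a/b)^{-1}(\III_0)$ are $-p/q$ and $-p'/q'$. Lemma~\ref{lem:inters-geod-horo} then yields that $W^+$ meets $M(a/b)^{-1}(\III_0)$ in two transverse points when $(q\gamma+p)(q'\gamma+p')<r$, in one tangential point when $(q\gamma+p)(q'\gamma+p')=r$, and not at all when the inequality reverses. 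The total number of intersection points on the long arc---including the two endpoints $(\gamma,r,\pm 1)$, both contributed by $a/b=1/1$ under the ``strict'' case since $r>\gamma$---equals $2\#\{\text{strict}\}+\#\{\text{equal}\}$; since $n_2$ is one less than this total number of points visited, the formula follows. The main bookkeeping subtlety will be that Lemma~\ref{lem:coding-geod-pq} labels the same geodesic by $\hat{M}$ applied to a generally different fraction via the involution of Lemma~\ref{lem:matr-perm}; working directly with the rows of $M(a/b)$ as above sidesteps the involution, and specializing to $\gamma=1$ recovers the period formula of Theorem~\ref{thm-periodic} as a useful consistency check.
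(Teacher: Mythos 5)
Your proposal is correct and follows essentially the same route as the paper's proof: (i), (iii), (iv) by direct iteration of Tables \ref{tab-eps+1} and \ref{tab-eps0}, and (ii) by traversing the arc of the horocycle in $\{\Real z\le 0\}$ from $(\gamma,r,-1)$ to $(\gamma,r,+1)$ and counting its intersections with the lines $\III_n$ and the Farey geodesics via Lemma \ref{lem:inters-geod-horo}, with the $-1$ accounted for by the two endpoint intersections with $\III_0$ itself (the pair $a/b=1/1$). The only deviation is a bookkeeping shortcut in (ii): you label the edges directly by $M(a/b)$ and read off the endpoints of $M(a/b)^{-1}(\III_0)$ by explicit inversion, whereas the paper reaches the same identification through Lemma \ref{lem:coding-geod-pq} and the reversal involution of Lemma \ref{lem:matr-perm}; your route matches the convention in the statement cleanly and is a harmless simplification.
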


\begin{proof}
    (i) Let $(\gamma_0,r_0,\eps_0) := (\gamma,r,\eps) \in \WW$ with
    $\gamma_0\in (0,1)$ and $\eps\not= 1$ and use the notation
    $(\gamma_n, r_n, \eps_n) = \PPP_h^n(\gamma,r,\eps)$ for all
    $n\ge 1$. To obtain $(\gamma_1,r_1,\eps_1)$ we use Table
    \ref{tab-eps+1} or Table \ref{tab-eps0}. Let $\eps_0=+1$. If
    $r_0\ge 1-\gamma_0$ we obtain $\gamma_1 = 1/(1-\gamma_0) >1$ and
    we are done with $n_1(\gamma,r)=1$. If $r_0<1-\gamma_0$, we obtain
    $(\gamma_1,r_1,\eps_1) =
    (\gamma_0/(1-\gamma_0),r_0/(1-\gamma_0)^2,+1)$, hence either
    $\gamma_1 \ge 1$ and we are done with $n_1(\gamma, r)=1$, or
    $\gamma_1\in (\gamma_0, 1)$ and we consider
    $(\gamma_2,r_2,\eps_2)$. In the case $\eps_0=0$, when
    $\gamma_0=r_0\ge 1/2$ then as before
    $\gamma_1 = 1/(1-\gamma_0) >1$ and we are done with
    $n_1(\gamma,r)=1$. If $\gamma_0=r_0< 1/2$ we obtain again
    $(\gamma_1,r_1,\eps_1) =
    (\gamma_0/(1-\gamma_0),r_0/(1-\gamma_0)^2,+1)$, now
    $\gamma_1\in (\gamma_0,1)$ and we consider
    $(\gamma_2,r_2,\eps_2)$. By repeating the argument, if we are not
    done at the $n$-th step, we have
    $\gamma_n = \gamma_0/(1-n \gamma_0) \in (\gamma_{n-1},1)$,
    $r_n = r_0/(1-n \gamma_0)^2$ and $\eps_n = +1$. At this point, if
    $r_n\ge 1-\gamma_n$, which is equivalent to
    $r_0 \ge (1-(n+1)\gamma_0)(1-n\gamma_0)$, then
    $\gamma_{n+1} = 1/(1-\gamma_n) >1$. Instead, if $r_n<1-\gamma_n$,
    we have
    $\gamma_{n+1} = \gamma_n/(1-\gamma_n) =
    \gamma_0/(1-(n+1)\gamma_0)$ and again there are two cases: either
    $\gamma_{n+1} \ge 1$ and we are done, or $\gamma_{n+1}< 1$ and we
    proceed with the $(n+1)$-th step.

\noindent
We can conclude that either there exists $\bar n$ such that $r_0 \ge (1-(\bar n+1)\gamma_0)(1-\bar n\gamma_0)$, and then $\gamma_{\bar n+1}>1$, or the sequence $\{ \gamma_n\}$ is given by $\gamma_n = \gamma_0/(1-n\gamma_0)$ for all $n\ge 1$, and since for all $\gamma_0\in (0,1)$ there exists $\tilde n$ such that $\gamma_0>1/\tilde n$, we have $\gamma_{\tilde n+1}>1$. Therefore
\[
n_1(\gamma,r) = 1+\min \left\{ \min\left\{ k\ge 1\, :\, r \ge (1-(k+1)\gamma)(1-k\gamma)\right\}\, ,\, \min \left\{ j\ge 1\, :\, \frac{\gamma}{1-j \gamma} \ge 1\right\} \right\}.
\]

\noindent
(ii) If $\gamma=1$, we are looking at the periodic point $(1,r,-1)$,
and we have argued that $\PPP_h^{\per(r)-1}(1,r,-1)=(1,r,+1)$ where
$\per(r)$ is defined in Theorem \ref{thm-periodic}. Hence
$n_2(1,r)=\per(r)-1$. For a general $\gamma>0$ we need to count the
intersections of the horocycle tangent to $\R$ at $\gamma$ and of
radius $r$, with the sets $\III_n$ with $n<0$ and with the geodesics
$\geod(p/q,p'/q')$ defined in \eqref{geod-pq}. The number of these
intersections give $n_2(\gamma,r)-1$.

The intersections with the lines $\III_n$ exist for all $n$ such that
$\gamma-r\le n<0$. In the formula for $n_2(\gamma,r)$ they correspond
to the cases $s=1$ and $m=|n|$, for which
\[
    M\left(\frac {|n|}{1} \right) = \begin{pmatrix} 1 & |n| \\ 0 &
        1 \end{pmatrix}
\]
and the condition is $\gamma+|n|\le r$. By Lemma
\ref{lem:inters-geod-horo}, the intersections with the geodesic
$\geod(p/q,p'/q')$ occur if $r\ge (q\gamma+p)(q'\gamma+p')$, and by
Lemmas \ref{lem:coding-geod-pq} and \ref{lem:via-matrici}, to such an
intersection it corresponds a point $(\gamma',r',\eps')\in \WW$ with
\[
\gamma'= \hat{M}\left(\frac pq\oplus \frac{p'}{q'}\right) (\gamma), \quad r' = \DDD{\hat{M}\left(\frac pq\oplus \frac{p'}{q'}\right)(\gamma)} r.
\]
Since in Section \ref{sec:trees} we have shown that for all $a,b\in \N$ with $(a,b)=1$ we have
\[
M\left(\frac ab \right) = M\left(\frac pq\oplus \frac{p'}{q'}\right) =  \begin{pmatrix} p' & p \\ q' & q \end{pmatrix} \quad \text{and} \quad \hat{M}\left(\frac pq\oplus \frac{p'}{q'}\right) =   \begin{pmatrix} q & p \\ q' & p' \end{pmatrix},
\]
we obtain the formula for $n_2(\gamma,r)$.
\\[0.15cm]
((iii) and (iv)) It is enough to repeatedly apply $\PPP_h$ as defined
in Tables \ref{tab-eps+1} and \ref{tab-eps0}.
\end{proof}

\begin{thm} \label{thm:acc-map}
Let $X:= ([1,\infty) \times [1,\infty) \times \{0, +1\}) \cap \WW$. For all $(\gamma,r,\eps) \in \WW$ there exists $\bar n(\gamma,r) \in \N$ such that $\PPP_h^{\bar n(\gamma,r)}(\gamma,r,\eps) \in X$. In particular it is well defined the map $\TTT_h:X \to X$ given by
\[
\TTT_h(\gamma, r, \eps) =  \PPP_h^{\tau(\gamma,r)}(\gamma, r, \eps)\, \quad \text{with} \quad \tau(\gamma,r) = \left\{ \begin{array}{ll} \lfloor \gamma \rfloor + 1 + n_2\left(\frac{1}{1+\lfloor \gamma \rfloor -\gamma}, \frac{r}{(1+\lfloor \gamma \rfloor -\gamma)^2}\right) & \text{if $\gamma \not\in \N$} \\[0.2cm] \gamma -1  & \text{if $\gamma \in \N$}
\end{array} \right.
\]
and $n_2(\gamma,r)$ defined as in Proposition \ref{prop-repar}. More specifically
\[
\TTT_h(\gamma, r, \eps) = \left( \frac{1}{1+\lfloor \gamma \rfloor - \gamma}\, ,\, \frac{r}{(1+\lfloor \gamma \rfloor - \gamma)^2}\, ,\, \tilde \eps \right)\, .
\]
\end{thm}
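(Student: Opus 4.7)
The plan is to build $\TTT_h$ as a concatenation of steps, each governed by a distinct part of Proposition~\ref{prop-repar}, and then to deduce the existence of $\bar n(\gamma,r)$ for an arbitrary starting point by first driving the orbit into $X$. The verification is a careful bookkeeping of the first coordinate, the radius and the sign $\eps$ at each step, making repeated use of the fact that any $(\gamma,r,\eps)\in X$ satisfies $r\ge \gamma\ge 1$.

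For $(\gamma,r,\eps)\in X$ with $\gamma\notin\N$, I would set $k:=\lfloor\gamma\rfloor\ge 1$ and $\alpha:=\gamma-k\in(0,1)$. Proposition~\ref{prop-repar}(iii) gives
\[
\PPP_h^{k}(\gamma,r,\eps) = (\alpha, r, \eps),
\]
and since $r\ge 1 > 1-\alpha$, one further iteration of $\PPP_h$ (the row $\gamma<1,\ r>1-\gamma$ of Table~\ref{tab-eps+1}, or the row $\tfrac12<\gamma=r<1$ of Table~\ref{tab-eps0}) sends this point to
\[
(\gamma',r',-1) \;:=\; \Bigl(\tfrac{1}{1+\lfloor\gamma\rfloor-\gamma},\; \tfrac{r}{(1+\lfloor\gamma\rfloor-\gamma)^{2}},\; -1\Bigr),
\]
with the limiting value $r=1-\alpha$ producing instead an $\eps'=0$ endpoint that already lies in $X$. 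Proposition~\ref{prop-repar}(ii) then provides the further $n_2(\gamma',r')$ iterations needed to reach $(\gamma',r',+1)$, accounting for the last summand in $\tau(\gamma,r)$. The total count is $k+1+n_2(\gamma',r')=\tau(\gamma,r)$ and the endpoint lies in $X$ because $\gamma'\ge 1$ and $r'\ge r\ge 1$, matching the claimed formula.

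The case $\gamma\in\N$ is simpler: for $\gamma\ge 2$ Proposition~\ref{prop-repar}(iv) produces $\PPP_h^{\gamma-1}(\gamma,r,\eps)=(1,r,+1)\in X$, in agreement with the stated formula since $1+\lfloor\gamma\rfloor-\gamma=1$; for $\gamma=1$ one has $\tau=0$ and the formula reduces to the identity on the fixed slice $\{(1,r,\eps)\}$. Together these establish $\TTT_h:X\to X$ with the prescribed image. For the existence of $\bar n(\gamma,r)$ starting from a general $(\gamma,r,\eps)\in\WW$, I would first use Proposition~\ref{prop-repar}(ii) to trade $\eps=-1$ for $\eps=+1$, and then, when $\gamma\in(0,1)$, iterate Proposition~\ref{prop-repar}(i) until the first coordinate crosses $1$. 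The crossing step necessarily falls in the $r\ge 1-\gamma$ branch of Tables~\ref{tab-eps+1}/\ref{tab-eps0}, which automatically produces $r'\ge \gamma'\ge 1$, so that at most one further application of (ii) lands the orbit into $X$.

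The main obstacle I expect is the interplay between the case distinctions of Tables~\ref{tab-eps+1}--\ref{tab-eps0} and the constraint $r\ge 1$: I must verify that when $\alpha$ is very close to $0$ the only admissible "crossing" step is indeed the one in the $r>1-\alpha$ branch, and that after crossing we always have $r'\ge \gamma'$, so that the formula of Proposition~\ref{prop-repar}(ii) applies verbatim to the new point $(\gamma',r',-1)$. Apart from this bookkeeping, the proof is a direct concatenation of ingredients already proved in Proposition~\ref{prop-repar}.
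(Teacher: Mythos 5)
Your proposal is correct and follows essentially the same route as the paper: apply Proposition \ref{prop-repar}(iii), then one crossing step (the case $n_1=1$ of part (i), using $r\ge 1>1-\alpha$), then part (ii), for $\gamma\notin\N$; part (iv) and the convention $\PPP_h^0=\mathrm{id}$ for $\gamma\in\N$; and the combination of (i) and (ii) for the existence of $\bar n(\gamma,r)$ on all of $\WW$. The only superfluous items are the tangency case $r=1-\alpha$ and the Table-$\eps=0$ row $\tfrac12<\gamma=r<1$, which can never occur for points of $X$ because $r\ge 1>1-\alpha$; they are harmless, and your flagged check that the crossing must occur in the $r\ge 1-\gamma$ branch indeed holds since $r\ge\gamma$ on $\WW$ forces $r\ge 1-\gamma$ whenever $\gamma\ge\tfrac12$.
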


\begin{proof}
From Proposition \ref{prop-repar} we know that for all $(\gamma,r,-1)$ we end up at $(\gamma,r,+1)$ with $n_2(\gamma,r)$ iterations of $\PPP_h$, whereas if $\gamma\in (0,1)$ and $\eps\not= -1$ a point with $\gamma'\ge 1$ is reached with $n_1(\gamma,r)$ iterations. Together, these two steps imply the first part of the statement.

\noindent
It is then possible to define a map $\TTT_h$ from $X$ to itself. Let us check that the times $\tau(\gamma,r)$ may be chosen as in the statement. Considering first the case $\gamma \not\in \N$, Proposition \ref{prop-repar}-(iii) entails that the points $(\gamma,r,\eps)\in X$ hit $(\gamma-\lfloor \gamma \rfloor,r,+1)$ after $\lfloor \gamma \rfloor$ iterations of $\PPP_h$. Since $\gamma-\lfloor \gamma \rfloor \in (0,1)$ and $r\ge \gamma >1$, we can apply Proposition \ref{prop-repar}-(i) with $n_1(\gamma-\lfloor \gamma \rfloor,r)=1$, to get
\[
\PPP_h^{\lfloor \gamma \rfloor +1}(\gamma, r, \eps) =  \left( \frac{1}{1+\lfloor \gamma \rfloor - \gamma}\, ,\, \frac{r}{(1+\lfloor \gamma \rfloor - \gamma)^2}\, ,\, -1 \right)\, .
\]
We can now apply Proposition \ref{prop-repar}-(ii) and conclude the
computation of $\tau(\gamma,r)$ for $\gamma \not\in \N$. If
$\gamma \in \N$ the result immediately follows from Proposition
\ref{prop-repar}-(iv), with the standard convention that $\PPP_h^0 $
is the identity map.
\end{proof}

\begin{rem}
    We point out that the map $\TTT_h$ is not the first return map of
    $\PPP_h$ onto $X$. For $(\gamma,r,\eps)$ with $\gamma \in \N$,
    since we are considering points in the periodic orbit of
    $(1,r,+1)$, one can interpret $\TTT_h$ as the map that first sends
    $(\gamma,r,\eps)$ to the subset in $\WWW$ of points with
    $\gamma'\in (0,1]$, and then goes once round the periodic
    orbit. Hence it is the identity for $\gamma=1$. If
    $\gamma\not\in \N$, the interpretation may be similar. We first
    reach the subset in $\WWW$ of points with $\gamma'\in (0,1]$, and
    then ``follow'' a close periodic orbit, whose period only depends
    on $r$ and $\gamma$.
\end{rem}

Looking at the first component of the map $\TTT_h$, we are led to study the map
\[
T: [1,\infty) \to [1,\infty)\, , \quad T(t) = \frac{1}{1+\lfloor t \rfloor -t}
\]
for which an easy computation yields the following result.

\begin{prop}\label{prop-map-q}
The dynamical system $([1,\infty),T)$ is topologically conjugate to the backward continued fraction system $([0,1),F)$ given by
\[
F: [0,1) \to [0,1)\, ,\quad F(s) = \frac{1}{1-s}\pmod{1}
\]
with conjugacy $\phi : [1,\infty) \to [0,1)$ given by
$\phi(t) = 1-\frac 1t$. As a consequence the map $T$ preserves the
infinite measure $d\mu(t) = dt/(t(t-1))$. In particular, if $t\in \Q$
there exists $k_t\ge 0$ such that $T^{k_t}(t)=1$, and therefore
$T^k(t)=1$ for all $k\ge k_t$.
\end{prop}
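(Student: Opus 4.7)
The plan unfolds in three steps. First, I would verify the conjugacy by direct computation. The map $\phi:[1,\infty)\to[0,1)$ is a homeomorphism with inverse $\phi^{-1}(s)=1/(1-s)$, and on the one hand
\[
\phi(T(t)) = 1 - \bigl(1+\lfloor t\rfloor -t\bigr) = t-\lfloor t\rfloor,
\]
while on the other hand $1-\phi(t) = 1/t$ gives $F(\phi(t)) = t \pmod{1} = t-\lfloor t\rfloor$. These agree, so $\phi\circ T=F\circ\phi$ as required.

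Second, for the invariant measure I would first establish that $F$ preserves $ds/s$ on $(0,1)$ and then push this measure forward through $\phi^{-1}$. The invariance of $ds/s$ reduces to a transfer-operator identity: the preimages of $t\in(0,1)$ under $F$ are $s_n=(n+t-1)/(n+t)$ for $n\ge 1$ with $|F'(s_n)|=(n+t)^2$, and
\[
\sum_{n\ge 1}\frac{1}{|F'(s_n)|\, s_n} = \sum_{n\ge 1}\frac{1}{(n+t-1)(n+t)} = \frac{1}{t}
\]
by telescoping. Changing variables via $s=1-1/t$ then converts $ds/s$ into $dt/(t(t-1))$, as claimed.

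Third, the rational orbits are best handled on the $F$-side. If $s=p/q$ in lowest terms with $0<p<q$, then $1/(1-s)=q/(q-p)$ and therefore $F(p/q)$ is a rational with denominator at most $q-p<q$. Iterating, the denominators form a strictly decreasing sequence of positive integers, forcing the orbit to reach the fixed point $0$ of $F$. Since $\phi(1)=0$ and $T(1)=1$, conjugating back yields $T^{k_t}(t)=1$ for some $k_t\ge 0$, and the stabilization $T^k(t)=1$ for $k\ge k_t$ follows from $T(1)=1$.

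The only step requiring some care is the telescoping identity underlying the invariance of $ds/s$ for $F$; the remaining claims reduce to algebraic manipulations and an elementary observation about denominators of rationals under $F$.
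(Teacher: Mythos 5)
Your proposal is correct and amounts to carrying out the ``easy computation'' that the paper merely asserts: the same conjugacy check $\phi\circ T=F\circ\phi$, the pullback of the classical invariant density $ds/s$ of the backward continued fraction map (which you verify directly via the transfer-operator/telescoping identity) giving $dt/(t(t-1))$, and the denominator-descent argument showing rationals reach the fixed point. No gaps; your treatment simply supplies the details the paper leaves implicit.
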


\noindent
Let us now consider the action of $\TTT_h$ on the second
component. One notes that if $(\gamma,r,\eps)$ is not periodic as the
second component is strictly increasing since
$(1+\lfloor \gamma \rfloor -\gamma) <1$ for all $\gamma \not\in
\N$. Hence if we wait enough time, the map $\PPP_h$ makes the radius
of the horocycle to increase, thus converging to the cusp of $\MMM$.

\begin{rem} The map $F$ defined in Proposition \ref{prop-map-q} gets
    its name because can be obtained by flipping the standard
    continued fraction transformation about the vertical line
    $x=1/2$. Adler and Flatto showed in \cite{adler-flatto-2} that $F$
    is the map obtained by inducing on $(0,1)$ the first component of
    the Poincar\'e map on $\CCC$ of the geodesic flow $\tilde g_t$
    (see the map $\PPP_g$ constructed in Appendix \ref{app:geodesic},
    Equation (\ref{map-g})).  On the other hand, we showed above that
    $F$ is (conjugate to) the map on $[1,\infty)$ obtained by a
    time-reparameterization of the first component of the Poincar\'e
    map on $\CCC$ of the horocycle flow $\tilde h_s^+$.  We believe
    that this result helps to highlight how the parabolic horocycle
    flow also contains an ``expanding component'', which can be
    detected by an appropriate reparameterization.
\end{rem}

\section{Relations with \cite{ath-che}} \label{sec:athreya}

In \cite{ath-che} the authors have introduced a Poincar\'e section for the horocycle flow on $\MMM$ using the identification of $S\HP$ with the space of unimodular lattices. By adapting their argument to our notations, let
\[
\tilde \CCC := \set{(z,\zeta) \in S\MMM\, :\, y\ge 1\, ,\, 0< x\le 1\, ,\, \theta(\zeta)= \pi}\, ,
\]
that is all the vectors pointing downwards with base point in the
subset of the fundamental domain\footnote{Here we are thinking of the
  fundamental domain in the strip $x\in (0,1]$.} of $\MMM$ with
imaginary part greater than or equal to $1$. As in Section
\ref{sec:poinc-map} we set
\[
\tilde C := \set{(z,\zeta)\in S\HP\, :\, \pi_*(z,\zeta) \in \tilde \CCC}\, .
\]
Setting
\[
\Omega:= \set{(\alpha,\beta)\in \R^2\, :\, \alpha,\beta\in (0,1]\, ,\, \alpha+\beta>1}\, ,
\]
the set $\tilde \CCC$ may be identified with $\Omega$ by
\begin{equation}\label{the-point-z}
z= \left\{ \begin{array}{ll}
\frac \beta \alpha - \left\lfloor \frac \beta \alpha \right\rfloor + i \frac 1{\alpha^2} & \text{if $\frac \beta \alpha \not\in \Z$}\\[0.2cm]
1+ i \frac 1{\alpha^2} & \text{if $\frac \beta \alpha \in \N$}
\end{array} \right..
\end{equation}

\begin{thm}[\cite{ath-che}]
The Poincar\'e map of the horocycle flow $\tilde h_s^+$ with negative $s$ on the section $\tilde \CCC$ in the variables $(\alpha,\beta)\in \Omega$ is the Boca-Cobeli-Zaharescu map studied in \cite{BCZ} and defined to be
\[
V: \Omega \to \Omega\, ,\quad V(\alpha,\beta) = \left(\beta, -\alpha +\left\lfloor \frac{1+\alpha}{\beta} \right\rfloor \beta \right)\, .
\]
\end{thm}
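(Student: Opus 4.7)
The plan is a direct matrix calculation based on the identification of $S\HP$ with $PSL(2,\R)$. I would first write down the representative $\Gamma_0$ of a point $(z_0,\zeta_0)\in\tilde\CCC$ with coordinates $(\alpha_0,\beta_0)\in\Omega$: the conditions $z_0 = \{\beta_0/\alpha_0\} + i/\alpha_0^2$ and $\theta(\zeta_0) = \pi$ yield
\[
\Gamma_0 = \begin{pmatrix} \beta_0 - k_0\alpha_0 & -1/\alpha_0 \\ \alpha_0 & 0 \end{pmatrix},\qquad k_0 := \lfloor \beta_0/\alpha_0\rfloor,
\]
and by (\ref{horocycle-sl2r}) the horocycle flow acts as $\Gamma(s) = \Gamma_0 H_s$ with $H_s = \begin{pmatrix} 1 & s \\ 0 & 1\end{pmatrix}$. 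The Poincar\'e return then amounts to finding the (smallest, with the correct sign) $|s|$ and a $\gamma = \begin{pmatrix} a & b \\ c & d\end{pmatrix}\in PSL(2,\Z)$ such that $\gamma\,\Gamma(s)$ has the same shape as $\Gamma_0$, i.e.\ $(2,2)$-entry zero (encoding $\theta=\pi$) and $(2,1)$-entry $\alpha_1\in(0,1]$ (encoding $y'\ge 1$).

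Writing out the vanishing of the $(2,2)$-entry gives
\[
s = \frac{c}{\alpha_0\,\alpha_1},\qquad \alpha_1 = c(\beta_0 - k_0\alpha_0) + d\alpha_0,
\]
with $\gcd(c,d)=1$ forced by $\det\gamma = 1$. The key combinatorial observation is that the defining inequality $\alpha_0+\beta_0>1$ of $\Omega$ translates into $\lfloor(1-(\beta_0-k_0\alpha_0))/\alpha_0\rfloor = k_0$; so, taking $|c|=1$ (the smallest admissible value), the largest $d$ for which $\alpha_1\le 1$ is exactly $d=k_0$, yielding $\alpha_1 = (\beta_0-k_0\alpha_0)+k_0\alpha_0 = \beta_0$. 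This is the first coordinate of $V(\alpha_0,\beta_0)$.

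For the second coordinate, with $(|c|,d)=(1,k_0)$ fixed, the relation $ad-bc=1$ determines $b$ from $a$, and a direct calculation reduces the $(1,1)$-entry of $\gamma\,\Gamma(s)$ to $a\beta_0-\alpha_0$. Requiring this to lie in $[0,\beta_0)$ pins down $a=\lceil\alpha_0/\beta_0\rceil$. Reading off $\beta_1$ then involves a unique left $R^{k_1}$-shift enforcing $\beta_1\in(0,1]$ and $\alpha_1+\beta_1>1$; one finds $\beta_1 = (a+k_1)\beta_0 - \alpha_0$, and the $\Omega$-constraint forces $a+k_1$ to be the unique integer in $\bigl((1+\alpha_0)/\beta_0 - 1,\,(1+\alpha_0)/\beta_0\bigr]$, namely $\lfloor(1+\alpha_0)/\beta_0\rfloor$. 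This produces exactly the BCZ formula $\beta_1 = -\alpha_0 + \lfloor(1+\alpha_0)/\beta_0\rfloor\beta_0$.

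The main obstacle is the minimality check: one must rule out that some $(c,d)$ with $|c|\ge 2$ yields an earlier return while still landing back in $\tilde\CCC$. This again follows from $\alpha_0+\beta_0>1$, which caps the feasible $\alpha_1$ at $1$ and so keeps $|c|/\alpha_1$ from being smaller than the value realised by $(|c|,d)=(1,k_0)$. The marginal case $\beta_0/\alpha_0\in\N$, covered by the second branch of (\ref{the-point-z}), reduces to the same calculation with $\beta_0 - k_0\alpha_0 = 0$.
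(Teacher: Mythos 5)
The paper does not prove this statement at all: it is quoted verbatim from \cite{ath-che} (``by adapting their argument to our notations''), so your direct $PSL(2,\R)$ matrix computation is being measured against Athreya--Cheung's lattice-geometric argument, not against anything in this paper. Your setup is correct: $\Gamma_0=\left(\begin{smallmatrix}\beta_0-k_0\alpha_0 & -1/\alpha_0\\ \alpha_0 & 0\end{smallmatrix}\right)$ does represent the point of $\tilde \CCC$ given by \eqref{the-point-z}, a return to $\tilde\CCC$ is exactly a left $PSL(2,\Z)$-translate with vanishing $(2,2)$-entry and $(2,1)$-entry $\alpha_1\in(0,1]$, and the crossing times are $s=c/(\alpha_0\alpha_1)$ with $\alpha_1=c(\beta_0-k_0\alpha_0)+d\alpha_0$, $\gcd(c,d)=1$. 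But there is a genuine sign problem: your chosen branch $c=+1$, $d=k_0$ gives $\alpha_1=\beta_0>0$ and hence $s=+1/(\alpha_0\beta_0)>0$, i.e.\ you have computed the first return in \emph{forward} time, while both the statement and your own first paragraph stipulate negative $s$. If you genuinely impose $s<0$ (which, with $\alpha_1>0$, forces $c<0$), the same maximisation gives $\alpha_1=\lfloor(1+\beta_0)/\alpha_0\rfloor\alpha_0-\beta_0$ and then $\beta_1=\alpha_0$, i.e.\ $V^{-1}(\alpha_0,\beta_0)$, not $V(\alpha_0,\beta_0)$. So under the conventions \eqref{rep-matrici-sh}, \eqref{horocycle-sl2r}, \eqref{the-point-z} your argument as written proves the statement for the opposite time direction (or, read literally, produces the inverse map); a correct proof must confront this orientation bookkeeping explicitly rather than silently switching the sign of $s$.

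The second gap is the minimality check, which you rightly call the main obstacle but then dispatch with an argument that does not work: $\alpha_1\le 1$ only gives $|s_{c,d}|\ge |c|/\alpha_0$, which beats $1/(\alpha_0\beta_0)$ only when $\beta_0\ge 1/|c|$, and $\beta_0$ can be any number in $(1-\alpha_0,1]$, in particular smaller than $1/2$. The claim is nevertheless true, but the proof needs coprimality together with a sharper use of $\alpha_0+\beta_0>1$: if $c\ge 1$ and $\alpha_1=c(\beta_0-k_0\alpha_0)+d\alpha_0>c\beta_0$, then $d\ge ck_0+1$, hence $\alpha_1\ge c\beta_0+\alpha_0\ge \alpha_0+\beta_0>1$, contradicting $\alpha_1\le 1$; therefore $\alpha_1\le c\beta_0$, i.e.\ $|s_{c,d}|\ge 1/(\alpha_0\beta_0)$ for every admissible $(c,d)$, and equality with $|c|\ge 2$ would force $d=ck_0$, contradicting $\gcd(c,d)=1$. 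Some such argument is indispensable, since it is precisely the first-return property; everything else in your computation (the determination of $a$, the $R$-shift, and the identification of $\lfloor(1+\alpha_0)/\beta_0\rfloor$ via the constraint $(\alpha_1,\beta_1)\in\Omega$, as well as the marginal case $\beta_0/\alpha_0\in\N$) is correct. With these two repairs your matrix computation would give a complete, self-contained proof, and a more elementary route than the unimodular-lattice argument of \cite{ath-che} that the paper relies on.
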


\noindent
One of the main features of the map $V$ is its relation with the Farey
sequences. We recall that for each fixed $Q\in \N$ the \emph{Farey
  sequence} $\FFF(Q)$ is the collection of fractions in $[0,1]$ which
written in reduced form have denominator less than or equal to $Q$. If
\[
\FFF(Q) = \set{ \frac 01 = \frac{p_0}{q_0} < \frac 1Q =  \frac{p_1}{q_1} <  \frac{p_2}{q_2} < \dots <  \frac{p_{N-1}}{q_{N-1}} < \frac 11 =  \frac{p_N}{q_N}},
\]
the relation between $V$ and $\FFF(Q)$ introduced in \cite{BCZ} is that
\[
V\left(\frac{q_{i-1}}{Q},\frac{q_{i}}{Q}\right) = \left(\frac{q_{i}}{Q},\frac{q_{i+1}}{Q}\right)
\]
for all $i=1,\,\dots,\,N$, with the convention $q_{N+1}=q_0$. Hence the denominators of the fractions in the Farey sequence $\FFF(Q)$ describe a periodic orbit of $V$ of period $N=N(Q)$, one less than the cardinality of $\FFF(Q)$. Moreover, it is shown in \cite{ath-che} that this periodic orbit corresponds to a closed horocycle of length $Q^2$.

\noindent
Let us find the corresponding periodic orbit for the Poincar\'e map $\PPP_h$. As explained in Remark \ref{rem-closed-w}, the closed horocycle of length $Q^2$ corresponds to the $\PPP_h$-periodic point $(1, Q^2/2, -1) \in \WW$ for $Q\ge 2$, with orbit
\[
\OOO\left(1,\frac{Q^2}{2},-1\right) = \set{\left(\frac ab, \frac{Q^2}{2\, b^2}, \pm 1\right) \, :\, (a,b)=1\, ,\, ab<\frac{Q^2}{2}} \cup \set{\left(\frac ab, \frac{Q^2}{2\, b^2}, 0\right) \, :\, (a,b)=1\, ,\, ab=\frac{Q^2}{2}}\, .
\]
Therefore if one follows the closed horocycle $W^+_{Q^2/2}$, tangent to $\R$ at 1 and of radius $Q^2/2$, from its lowest point on $\III_0$ until it closes on $S\MMM$, then one finds $N(Q)$ passages through the section $\tilde \CCC$. Each of these passages occurs between two consecutive returns to the section $\CCC$ we have used to define the map $\PPP_h$. In particular, we use the suspension flow $\phi_s$ defined in \eqref{susp-flow-horo} in terms of the map $\PPP_h$ to identify the passages through $\tilde \CCC$.

\begin{prop} \label{prop:rel-ath-che}
For all $Q\ge 2$, for a point $(\gamma, r, \eps) \in \OOO(1,Q^2/2,-1)$ there exists $s_* \in (s_h(\gamma,r,\eps),0)$ such that $\phi_{s_*}(\gamma,r,\eps,0) \in \tilde \CCC$ if and only if $\gamma = a/b$ and $r=Q^2/(2b^2)$, with $(a,b)\in \N\times \N$ in the set
\[
\tilde \FFF(Q) := \set{(a,b)=1,\, a\le b\le Q,\, ab\le \frac{Q^2}{2}} \cup \set{(a,b)=1, \, a>b,\, a+b\le Q,\, a(a+b)> \frac{Q^2}{2}}.
\]
Hence $\#\,  \tilde \FFF(Q) = N(Q)$ for all $Q\ge 2$. In addition, the point $\phi_{s_*}(a/b, Q^2/(2b^2), \eps,0)$ corresponds to the point $\left(\alpha(a,b), \beta(a,b)\right) \in \Omega$ given by
\[
\left(\alpha(a,b), \beta(a,b)\right) =\left\{ \begin{array}{ll} \left(\frac bQ\, ,\, \frac{a+\lfloor \frac{Q-a}b\rfloor b}Q\right)  & \text{if $\frac ab \le 1$} \\[0.3cm] \left( \frac{a+b}{Q}\, ,\, \frac{a}{Q}\right) & \text{if $\frac ab > 1$}
\end{array} \right..
\]
\end{prop}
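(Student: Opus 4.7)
The plan is to identify, for each $\tilde \CCC$-crossing of the closed horocycle of length $Q^2$ on $S\MMM$, the unique arc of the $\CCC$-return partition that contains it. By Remark \ref{rem-closed-w} this closed horocycle is parametrised by $\OOO(1, Q^2/2, -1)$, whose points are labelled by coprime pairs $(a, b)$ with $ab \le Q^2/2$. I will work in the lift $C_{a,b}$, the circle tangent to $\R$ at $a/b$ of radius $r = Q^2/(2b^2)$, parametrised by $\phi \mapsto (a/b + r\cos\phi, r(1+\sin\phi))$. The key observation is that a point of $S\MMM$ lies in $\tilde \CCC$ iff some lift $(x'+iy', -iy')$ to the strip fundamental domain satisfies $y' \ge 1$, $x' \in (0,1]$; such a lift is automatically the top (parameter $\phi = \pi/2$) of the horocycle circle tangent at $x'$ of radius $y'/2$. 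Via \eqref{the-point-z} one reads $y' = 1/\alpha^2$ and $x' = \{\beta/\alpha\}$ (or $x' = 1$ when $\beta/\alpha \in \Z$).

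For $(a, b) \in \tilde\FFF(Q)$ with $a \le b$, the formula yields $(x', y') = (a/b, Q^2/b^2)$, so the crossing lifts precisely to the top of $C_{a,b}$ itself; the constraint $b \le Q$ gives $y' \ge 1$, and $ab \le Q^2/2$ places $(a, b)$ in $\OOO$. Tables \ref{tab-eps+1} and \ref{tab-eps0} then show that the backward arc from $(a/b, r, +1)$ starts at $\phi_+ \in (\pi/2, \pi)$ and terminates on $\III_1$, $\JJJ_0$, or $\III_2$ at a point with $\cos \phi_{\text{end}} \ge 0$, so the arc crosses $\phi = \pi/2$ and catches the top.

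For $(a, b) \in \tilde\FFF(Q)$ with $a > b$, the formula gives $(x', y') = (a/(a+b), Q^2/(a+b)^2)$, so the crossing lifts to the top of $C_{a, a+b} = L(C_{a,b})$, with $L$ as defined in Section \ref{sec:poinc-map}. The constraint $a+b \le Q$ yields $y' \ge 1$; the constraint $a(a+b) > Q^2/2$ is equivalent to $r < (a/b)(1 + a/b)$ and simultaneously forces $(a, a+b) \notin \OOO$ (so the crossing is not already captured in case 1 via an arc based on $C_{a, a+b}$). Moreover $b(a+b) \le Q^2/2$ holds automatically from $a > b$ and $a + b \le Q$, so by Table \ref{tab-eps-1} the backward arc from $(a/b, r, -1)$ sits in the regime $1+\gamma \le r < \gamma(1+\gamma)$ and terminates on $\III_{-1}$. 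Pulling the top of $C_{a, a+b}$ back by $L^{-1}$ gives a point $p^* \in C_{a,b}$ with $\phi(p^*) \in (\pi, 3\pi/2)$, and an explicit trigonometric computation will show that $\phi(p^*)$ lies strictly between $\phi_-$ and the $\III_{-1}$-endpoint.

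To conclude, bijectivity of $(a, b) \mapsto (\alpha(a,b)\, Q, \beta(a,b)\, Q)$ onto the set of consecutive Farey-denominator pairs $\{(q, q'): (q, q')=1,\, 1 \le q, q' \le Q,\, q+q' > Q\}$ (of cardinality $N(Q)$) is checked directly: the defining inequalities of $\tilde\FFF(Q)$ translate to $q, q' \le Q$, coprimality, and $q+q' > Q$ (using $a(a+b) > Q^2/2 \Rightarrow 2a > Q$ in case 2), while the inverse associates to $(q, q')$ the pair $(a, b)$ by setting $b = q$, $a = q' \bmod q$ (case 1 if $ab \le Q^2/2$, otherwise replacing $b$ by $b - a$ to land in case 2). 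I expect the main obstacle to be the geometric matching in the second case, where the $L^{-1}$-pullback of the top of $C_{a, a+b}$ must be carefully located inside the backward-arc interval of Table \ref{tab-eps-1}.
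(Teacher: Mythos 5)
Your strategy is, at its geometric core, the same as the paper's: you use Tables \ref{tab-eps+1}, \ref{tab-eps-1}, \ref{tab-eps0} to identify the return regime of each orbit point, the matrix $L$ to transport the case $a>b$ to the horocycle tangent at $a/(a+b)$ of radius $Q^2/(2(a+b)^2)$, and \eqref{the-point-z} to read off $(\alpha,\beta)$; the case split $a\le b$ versus $a>b$ and the resulting formulas coincide with the paper's. Two organisational differences are real and worth keeping. First, you argue from crossings to arcs: since the arcs between consecutive returns to $\CCC$ partition the closed orbit, and every point of $\tilde\CCC$ on the closed horocycle lifts to the top of a circle in the $PSL(2,\Z)$-orbit of $W^+_{Q^2/2}$ tangent at some $p/q\in(0,1]$ with radius $Q^2/(2q^2)\ge 1/2$, locating each such top in one specific arc yields the ``only if'' for free; the paper instead checks arc by arc that the $\eps=-1$ arcs returning on $\JJJ_{-1}$ or $\III_0$, the $\gamma>1$, $\eps=+1$ arcs, and the arcs with $b>Q$ (resp. $a+b>Q$) contain no passage. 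Second, you prove $\#\tilde\FFF(Q)=N(Q)$ by a direct bijection with the coprime pairs $(q,q')$, $q,q'\le Q$, $q+q'>Q$, whereas the paper inherits the count $N(Q)$ from the correspondence with the BCZ map in \cite{ath-che}; your count is correct (modulo the harmless edge case $a=b=1$, where $q'\bmod q=0$) and makes this part of the statement self-contained.

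The genuine gap is that the two containment claims, which are the heart of the proof, are asserted rather than proved. For $a\le b$ you say the Tables ``show'' that the backward arc from $(a/b,r,+1)$ (or $\eps=0$) ends at a point with $\cos\phi_{\mathrm{end}}\ge 0$; the Tables do not give this, and as a statement about general points of $\WW$ it is false: for $\gamma=r=1/10$ the first return lies on $\JJJ_0$ at the point $L(iY)$, $Y=r'+\sqrt{(r')^2-(\gamma')^2}$ with $\gamma'=\gamma/(1-\gamma)$, $r'=r/(1-\gamma)^2$, which sits on the upper-left quarter of the circle, i.e.\ the arc ends \emph{before} the top. What saves your argument is exactly the hypothesis $b\le Q$, i.e.\ $r=Q^2/(2b^2)\ge 1/2$: in the $\JJJ_0$-return regime the return point is $L(iY)$, whose height $Y/(1+Y^2)\le 1/2\le r$ is strictly below every height $\ge r+\sqrt{r^2-\gamma^2}$ occurring on the open arc between the starting point and the top, so the top is reached first (the $\III_1$ and $\III_2$ cases are immediate since there $x\ge 1>\gamma$). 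This verification, or the paper's equivalent of it, must be written out. The deferred case $a>b$ does work and should also be included: with $t=\frac{a}{a+b}+i\frac{Q^2}{(a+b)^2}=u+iv$ one computes $L^{-1}(t)=\frac{u(1-u)-v^2+iv}{(1-u)^2+v^2}$, whose real part lies in $(-1,0)$ (using $u<1$ and $u(1-u)<1\le v^2$) and whose imaginary part is at most $1/v\le 1<r$, so $L^{-1}(t)$ lies on the lower-left quarter of $C_{a,b}$ strictly between the lower $\III_0$- and $\III_{-1}$-intersections, i.e.\ inside the traversed arc of Table \ref{tab-eps-1}. With these two computations supplied your proof is complete, and in substance it is the paper's proof reorganised as described above.
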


\begin{proof}
Let $Q\ge 2$ and consider all points $(\gamma, r, \eps) \in \OOO(1,Q^2/2,-1)$. Looking at the part of the horocycle flow from the point in $\CCC$ corresponding to $(\gamma, r, \eps)$ to its first return to $\CCC$, we need to determine whether there is a passage through $\tilde \CCC$.

\noindent
Let us first consider the case $\eps=+1$. If $\gamma = a/b > 1$ then, by Table \ref{tab-eps+1}, the first return to $\CCC$ occurs on $\III_1$ with $\PPP_h(a/b, Q^2/(2b^2),+1) = ((a-b)/b,Q^2/(2b^2),+1)$. Therefore along this portion of the horocycle flow there is no passage through $\tilde \CCC$, since the vector never points downward. If $\gamma=a/b=1$, then $a=b=1$ and the first return to $\CCC$ occurs on $\III_2$. In this case, before the return to $\III_2$, the horocycle $W^+_{Q^2/2}$ intersects $\III_1$ and on this intersection the vector points downward. The intersection point is $z = 1 + i Q^2$, so that from \eqref{the-point-z}
\[
\alpha(1,1) = \frac 1Q
\]
and $\beta/\alpha \in \N$, which implies
\[
\beta(1,1) = 1 = \frac QQ\, .
\]
If $\gamma = a/b <1$ and $b\le Q$, then the return to $\CCC$ occurs after a passage through the point
\[
z = \frac ab + i \frac{Q^2}{b^2}
\]
with vector pointing downward. This yields a passage through $\tilde \CCC$ with
\[
\alpha(a,b) = \frac bQ, \quad \frac{\beta(a,b)}{\alpha(a,b)} - \left\lfloor \frac{\beta(a,b)}{\alpha(a,b)} \right\rfloor = \frac ab
\]
by \eqref{the-point-z}, from which
\[
\beta(a,b) = \frac{a+\lfloor \frac{Q-a}b\rfloor b}Q\, .
\]
If instead $b>Q$, then there is no passage through $\tilde \CCC$ before the first return to $\CCC$. Otherwise, there would be a point $(\bar z, \bar \zeta) \in S\MMM$ with $\bar z\in \III^+$, that is with imaginary part greater or equal than 1, and $\theta(\bar\zeta) \in (-\pi, 0)$, equivalent in $S\MMM$ to $(z,\zeta)=\WWW^{-1}(a/b,Q^2/(2b^2),+1)$. But this is impossible, since $S_*(z,\zeta)$ has base point in $\III^+$ and angle in $(0,\pi)$.

\noindent
For the case $\eps=0$ we can repeat exactly the same argument. We have thus proved half of the result. Indeed, we have proved that for $\eps=0,+1$, for a point $(\gamma, r, \eps) \in \OOO(1,Q^2/2,-1)$ there exists $s_* \in (s_h(\gamma,r,\eps),0)$ such that $\phi_{s_*}(\gamma,r,\eps,0) \in \tilde \CCC$ if and only if $\gamma = a/b$ and $r=Q^2/(2b^2)$, with $(a,b)\in \N\times \N$ in the set
\[
\set{(a,b)=1,\, a\le b\le Q,\, ab\le \frac{Q^2}{2}}.
\]
In addition the point $\phi_{s_*}(a/b, Q^2/(2b^2), \eps,0)$ corresponds to the point
\[
\left(\alpha(a,b), \beta(a,b)\right) = \left(\frac bQ\, ,\, \frac{a+\lfloor \frac{Q-a}b\rfloor b}Q\right) \in \Omega.
\]

\noindent
We now consider the case $\eps=-1$. Looking at Table \ref{tab-eps-1}, if the radius is big enough the first return to $\CCC$ occurs on $\JJJ_{-1}$. Hence we are dealing with the portion of the flow on the horocycle $W^+$ tangent to $\R$ at a point $\gamma$ and of radius $r$ between $\III_0$ and $\JJJ_{-1}$. Let us consider the action of the matrix $S\in SL(2,\Z)$ on this horocycle. Since $S(\III_0)=\III_0$ and $S(\JJJ_{-1})=\III_1$, we get  the flow on $S(W^+)$ between $\III_0$ and $\III_1$. In addition, if $(z,\zeta)= \WWW^{-1}(\gamma,r,-1)$ (see \eqref{eq-map-W}), then $(\bar z, \bar \zeta) = S_*(z,\zeta)$ satisfies $\theta(\bar \zeta)\in (\pi/2, \pi)$ since $\theta(\zeta) \in (-\pi/2, 0)$. Therefore along the portion of the flow on $S(W^+)$ we are looking at, there is no passage through $\tilde \CCC$ because the vector never points downward. Since $W^+$ and $S(W^+)$ are equivalent in $S\MMM$, this shows that in this first case there is no passage through $\tilde \CCC$ before the first return to $\CCC$.

\noindent
By Table \ref{tab-eps-1} we are thus left with two cases to consider. The first corresponds to $\gamma=a/b >1$, $r = Q^2/(2b^2) <\gamma(1+\gamma)= a(a+b)/b^2$, and first return to $\CCC$ on $\III_{-1}$. We argue as before by looking at the action of the matrix $L\in SL(2,\Z)$. If $W^+$ is the horocycle tangent to $\R$ at $\gamma$ and of radius $r$, then $L(W^+)$ is the horocycle tangent to $\R$ at $L(\gamma)=\gamma/(1+\gamma) \in (0,1)$ and of radius $r/(1+\gamma)^2 < L(\gamma)$. This new horocycle does not intersect $\III_0$, and since $L(\III_0)=\JJJ_0$ and $L(\III_{-1})=\III_1$, we look at the portion of the flow along this horocycle from $\JJJ_0$ to $\III_1$. Since $L(\gamma)\in (0,1)$, there exists a point
\[
z= \frac{\gamma}{1+\gamma} + i \frac{2r}{(1+\gamma)^2} = \frac{a}{a+b} + i \frac{Q^2}{(a+b)^2}
\]
at which the vector points downward. Therefore, if $Q^2/(a+b)^2 \ge 1$, we have found a passage through $\tilde \CCC$. If instead $Q^2/(a+b)^2 < 1$ there is no passage through $\tilde \CCC$ as can be shown by arguing as in the case $\eps=+1$, $\gamma>1$ and $b>Q$ before.
Therefore, in this first case we obtain a passage through $\tilde \CCC$ if and only if $(a,b)$ is in the set
\[
\set{(a,b)=1, \, a>b,\, a+b\le Q,\, ab\le \frac{Q^2}{2}< a(a+b)}
\]
and one realises that the condition $(a+b)\le Q$ implies $ab\le Q^2/2$. In addition, using \eqref{the-point-z}, we obtain
\[
\alpha(a,b) = \frac{a+b}{Q}\, , \quad \frac{\beta(a,b)}{\alpha(a,b)} - \left\lfloor \frac{\beta(a,b)}{\alpha(a,b)} \right\rfloor = \frac aQ\, .
\]
It follows that $a\ge Q/2$, hence $2a+b>Q$, so that $\beta(a,b) = a/Q$.

\noindent
The second case we need to consider corresponds to $r  < 1 + \gamma$ and the first return to $\CCC$ occuring on $\III_0$. In this case, the horocycle $W^+$ along which we are flowing is equivalent to $L(W^+)$, which needs to intersect $\JJJ_0$ in two points, without intersecting other copies of $\III_0$ in the meantime. Again we are using that $L(\III_0)=\JJJ_0$. It means that the radius $r/(1+\gamma)^2$ of $L(W^+)$ is smaller than $1/2$. Hence no point along the horocycle $L(W^+)$ is in $\tilde \CCC$.

\noindent
We have thus examined all the points in the orbit of $(1,Q^2/2,-1)$, finding all the passages through $\tilde \CCC$. The proof is finished.
\end{proof}

\noindent
The ideas in the proof of Proposition \ref{prop:rel-ath-che} may be used also for non-periodic orbits. One can show that following the orbit of a point $(\gamma_0,r_0,\eps_0)\in \WW$, there is a passage through the section $\tilde \CCC$ between two consecutive returns to $\CCC$, $(\gamma_n, r_n, \eps_n) = \PPP_h^n(\gamma,r,\eps)$ and $(\gamma_{n+1}, r_{n+1}, \eps_{n+1}) = \PPP_h^{n+1}(\gamma,r,\eps)$ for $n\ge 0$, if and only if
\[
(\gamma_n, r_n, \eps_n) \in \set{\gamma\le 1\, ,\, r\ge \frac 12\, ,\, \eps\not= -1}\cup \set{\gamma>1\, ,\, \frac{(1+\gamma)^2}{2}\le r<\gamma(1+\gamma)\, ,\,  \eps=-1}\, .
\]

\appendix

\section{Proof of Lemma \ref{lem:matr-perm}} \label{app:proof}

We check that the relation holds for different combinations of the
matrices $L$ and $R$ in $M(p/q)$. Let us assume that $M(p/q)$ starts
with $L$ and ends with $R$, hence it can be written as
\begin{equation} \label{first-case-perm}
M\left( \frac pq \right) = L^{n_1}R^{m_1}L^{n_2}R^{m_2}\dots L^{n_s}R^{m_s}
\end{equation}
for $s\ge 1$, $n_1\ge 1$, and $m_s\ge 1$. We argue by induction on the number of blocks $L^{n_j}R^{m_j}$ in the matrix. Recall that
\[
L^n = \begin{pmatrix} 1 & 0 \\ n & 1 \end{pmatrix}, \qquad R^m = \begin{pmatrix} 1 & m \\ 0 & 1 \end{pmatrix}.
\]
If $s=1$ it is enough to compute
\[
L^nR^m = \begin{pmatrix} 1 & m \\ n & nm+1 \end{pmatrix} \quad \text{and} \quad R^mL^n = \begin{pmatrix} nm+1 & m \\ n & 1 \end{pmatrix}.
\]
Assuming that the statement holds for $s-1\ge 1$, let's prove it holds for $s$. Writing $M(p/q)$ as
\[
M\left( \frac pq \right) = M'\, L^{n_s}R^{m_s}, \quad \text{with} \quad M' = L^{n_1}R^{m_1}L^{n_2}R^{m_2}\dots L^{n_{s-1}}R^{m_{s-1}},
\]
we have
\[
\hat{M}\left( \frac pq \right) = R^{m_s} L^{n_s} \hat{M}'
\]
being $\hat{M}'$ the reverse function of $M'$. Since we can apply the statement to $M'$ by the inductive assumption,
\[
M' = \begin{pmatrix} a' & b' \\ c' & d'
\end{pmatrix} \quad \Leftrightarrow \quad \hat{M}' = \begin{pmatrix} d' & b' \\ c' & a'
\end{pmatrix},
\]
and we can compute that
\[
M\left( \frac pq \right) = \begin{pmatrix} a'+b'n_s & a'm_s + b'(n_sm_s+1) \\ c'+d'n_s & c'm_s+d'(n_sm_s+1)
\end{pmatrix}
\]
and
\[
\hat{M}\left( \frac pq \right) = \begin{pmatrix} d'(n_sm_s+1)+c'm_s & b'(n_sm_s+1)+a'm_s \\ d'n_s+c' & b'n_s+a'
\end{pmatrix}.
\]
Thus the lemma is proved if $M(p/q)$ is written as in
\eqref{first-case-perm}. The second case we consider is when $M(p/q)$
starts with $R$ and ends with $L$. However this case can be reduced to
the first one by exchanging the role of $M(p/q)$ and its
reverse. Lastly we consider the cases when $M(p/q)$ starts and ends
with the same matrix, either $L$ or $R$. Let us assume that $M(p/q)$
can be written as
\begin{equation} \label{last-case-perm}
M\left( \frac pq \right) = L^{n_1}R^{m_1}L^{n_2}R^{m_2}\dots L^{n_{s-1}}R^{m_{s-1}}L^{n_s}
\end{equation}
for $s\ge 1$, $n_1\ge 1$, and $n_s\ge 1$. If $s=1$ the matrix contains only $L$ and coincide with its reverse. The statement follows by the form of the matrices $L^n$. If $s>1$ we can write
\[
M\left( \frac pq \right) = M' L^{n_s}
\]
with
\[
M'= L^{n_1}R^{m_1}L^{n_2}R^{m_2}\dots L^{n_{s-1}}R^{m_{s-1}}
\]
a matrix of the case considered in \eqref{first-case-perm}. Hence the statement is true for $M'$ and
\[
M' = \begin{pmatrix} a' & b' \\ c' & d'
\end{pmatrix} \quad \Leftrightarrow \quad \hat{M}' = \begin{pmatrix} d' & b' \\ c' & a'
\end{pmatrix},
\]
Hence we can compute that
\[
M\left( \frac pq \right) = \begin{pmatrix} a'+b'n_s & b' \\ c'+d'n_s & d'
\end{pmatrix} \quad \text{and} \quad \hat{M}\left( \frac pq \right) = \begin{pmatrix} d' & b' \\ d'n_s+c' & b'n_s+a'
\end{pmatrix}.
\]
Thus the lemma is proved if $M(p/q)$ is written as in \eqref{last-case-perm}. The analogous argument works when $M(p/q)$ starts and ends with $R$.

\section{The Poincar\'e map for the geodesic flow and its suspension} \label{app:geodesic}

In this appendix we show for completeness the construction of the
Poincar\'e map for the geodesic flow with respect to the Poincar\'e
section $\CCC$ defined in \eqref{not-poin-sec}. This is similar in
spirit to the results in \cite{adler-flatto,series} and uses
essentially the same Poincar\'e section as in \cite{mayer-note}.

\noindent
Let $(z,\zeta)\in \CCC$ and $\gamma_{z,\zeta}(t)$ be the
geodesic tangent to $\zeta$ at $z$. Since
$\theta(\zeta) \in (-\pi,0)$ we have $\gamma_{z,\zeta}^+\in \R^+$ and
$\gamma_{z,\zeta}^-\in \R^-$. We let
\[
    \GG :=\set{(\gamma,\eta) \,:\, \gamma,\eta\in \R^+}
\]
and consider the map
\[
\GGG : \CCC \to \GG, \qquad (z,\zeta) \mapsto \GGG(z,\zeta) = \left(\gamma_{z,\zeta}^+, -\gamma_{z,\zeta}^-\right).
\]
Using the identification of $S\HP$ with $PSL(2,\R)$, the map $\GGG$ can be defined also as a map from $SL(2,\R)$ to $\GG$. Following the notation of Section \ref{sec:facts}, we have
\begin{equation}\label{map-sl2r-gg-inv}
(z,\zeta) = \GGG^{-1}(\gamma, \eta) \quad \Rightarrow \quad \Gamma_{(z,\zeta)} = \begin{pmatrix} \left(\frac \eta \gamma\right)^{\frac 14}\, \frac{\gamma}{\sqrt{\gamma+\eta}} & -\left(\frac \gamma \eta\right)^{\frac 14}\, \frac{\eta}{\sqrt{\gamma+\eta}}\\[0.2cm] \left(\frac \eta \gamma\right)^{\frac 14}\, \frac{1}{\sqrt{\gamma+\eta}} & \left(\frac \gamma \eta\right)^{\frac 14}\, \frac{1}{\sqrt{\gamma+\eta}} \end{pmatrix} \, .
\end{equation}
We now consider the Poincar\'e map on $\CCC$ of the geodesic flow $\tilde g_t$ starting from a point $(z,\zeta)\in \CCC$ and along $\gamma_{z,\zeta}(t)$ for positive times $t$. Let $(\gamma,\eta) = \GGG(z,\zeta)$. Given $(z',\zeta')$ the point in $\CCC$ of the first return, we can describe the Poincar\'e map as a map
\[
\PPP_g : \GG \to \GG, \quad \PPP_g(\gamma,\eta) = (\gamma',\eta'):= \GGG(z',\zeta')\, .
\]
Our aim is now to describe the map $\PPP_g$ on the points of $\GG$, also in relation with the $\{L,R\}$ coding of $\gamma \in \R^+$ introduced in Section \ref{sec:trees}. The map $\PPP_g$ depends on the set of $C$ defined in \eqref{equiv-poin-sec} on which the first return occurs. The following analogue of Lemma \ref{lem:via-matrici} holds and the proof is similar.

\begin{lem}\label{lem:via-matrici-g}
For $(z,\zeta)\in \CCC$, let $(\gamma,\eta) = \GGG(z,\zeta)$. If the first return to $C$ for $\tilde g_t$ with positive $t$ along $\gamma_{z,\zeta}(t)$ occurs on $\Gamma^{-1}(\III_0)$ for some $\Gamma\in PSL(2,\Z)$, then $(\gamma',\eta')=\PPP_g(\gamma,\eta)$ satisfies
\[
\gamma' = \Gamma(\gamma)\quad \text{and} \quad \eta' = -\Gamma(-\eta).
\]
\end{lem}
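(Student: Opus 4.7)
The approach is to adapt the opening observation of the proof of Lemma \ref{lem:via-matrici}. The only geometric input needed is that every $\Gamma\in PSL(2,\R)$ is an orientation-preserving isometry of $\HP$ whose M\"obius action extends continuously to the boundary $\partial\HP = \R\cup\{\infty\}$; consequently $\Gamma$ maps any oriented geodesic to an oriented geodesic, sending forward and backward endpoints at infinity to forward and backward endpoints respectively.

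First I would fix the lift: let $(w,\omega)\in C$ be the point realising the first return of $\tilde g_t$ with $w\in \Gamma^{-1}(\III_0)$, so that by the identification \eqref{equiv-poin-sec} we have $(z',\zeta') = \Gamma_*(w,\omega)$. Since $(w,\omega)$ lies on the forward continuation in $S\HP$ of the geodesic through $(z,\zeta)$, the two points share the same oriented geodesic in $\HP$, and in particular $\gamma^+_{w,\omega} = \gamma^+_{z,\zeta} = \gamma$ and $\gamma^-_{w,\omega} = \gamma^-_{z,\zeta} = -\eta$.

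Then I would apply $\Gamma$: the oriented geodesic $\gamma_{z',\zeta'}(\cdot)$ is $\Gamma\bigl(\gamma_{w,\omega}(\cdot)\bigr)$, so its endpoints at infinity are $\gamma^+_{z',\zeta'} = \Gamma(\gamma)$ and $\gamma^-_{z',\zeta'} = \Gamma(-\eta)$. Unwinding the definition $\GGG(z',\zeta') = \bigl(\gamma^+_{z',\zeta'},\, -\gamma^-_{z',\zeta'}\bigr)$ yields $\gamma' = \Gamma(\gamma)$ and $\eta' = -\Gamma(-\eta)$, as claimed.

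The only subtlety to check is that $\Gamma$ does not swap the two boundary endpoints, but this is immediate from the orientation-preservation of $\Gamma\in PSL(2,\R)$ on $\partial\HP$, equivalently $\DDD{\Gamma(x)}>0$ wherever defined, see \eqref{def-def}. In contrast to Lemma \ref{lem:via-matrici}, no case-split over the various subsets of $C$ where the first return may occur is needed: the formulas are intrinsic in $(\gamma,\eta)$ and the dependence on the return subset enters only through the boundary action of $\Gamma$. Hence I expect no real obstacle; the lemma is essentially a direct translation of the M\"obius action on $\partial\HP$ into the coordinates $(\gamma,\eta)$ of $\GG$.
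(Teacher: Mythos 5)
Your argument is correct and is essentially the paper's own: the paper merely remarks that the proof is ``similar'' to that of Lemma \ref{lem:via-matrici}, whose first step is exactly your observation that the returning orbit shares its geodesic (hence its boundary endpoints) with $(z,\zeta)$ and that $\Gamma$ acts on those endpoints by its M\"obius boundary action, the geodesic case being even simpler since no deformation-factor computation is needed. Your identification $(z',\zeta')=\Gamma_*(w,\omega)$ and the endpoint bookkeeping match the paper's conventions, so nothing is missing.
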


\noindent
Different cases have to be considered (see Figure
\ref{fig-geodflow}). Given $(z,\zeta)=\GGG^{-1}(\gamma,\eta)$, if
$\gamma>1$ the first return to $C$ for $g_t$ with $t$ positive along
$\gamma_{z,\zeta}(t)$ occurs on $\III_1=R(\III_0)$. On the other hand,
if $\gamma<1$ it occurs on $\JJJ_0=L(\III_0)$. Finally, when
$\gamma=1$ there are no returns to $C$ along $\gamma_{z,\zeta}(t)$ for
positive $t$. However we can extend the map also to this case, and
using Lemma \ref{lem:via-matrici-g} we obtain
\begin{equation}\label{map-g}
    (\gamma',\eta') = \PPP_g(\gamma,\eta) =
    \begin{cases}
        (\gamma -1, \eta+1) & \text{if $\gamma>1$}
        \\[0.2cm]
        \left( \frac{\gamma}{1- \gamma},\,\frac{\eta}{1+ \eta} \right)
        & \text{if $\gamma<1$}
        \\[0.2cm]
        \left( 0, \infty \right) & \text{if $\gamma=1$}
    \end{cases}
\end{equation}
so that we recognise the map $U$ defined in \eqref{ext-farey-map} as the factor map of $\PPP_g$ on the first coordinate.

\begin{figure}[h!]
    \pgfmathsetmacro{\myxlow}{-1.5}
\pgfmathsetmacro{\myxhigh}{1.5}
\pgfmathsetmacro{\myiterations}{2}

\begin{tikzpicture}[scale=3.5]

    \draw[-latex',thin](\myxlow-0.1,0) -- (\myxhigh+0.2,0);
    \draw[-latex',thin,name path=I0](0,0) -- (0,\myxhigh);

    \pgfmathsetmacro{\succofmyxlow}{\myxlow+1}
    \foreach \x in {-1,-0.5,0,0.5,1}
    {
      \draw (\x,0) -- (\x,-0.05);
    }
    \foreach \x in {-1,0,1}
    {
      \draw (\x,0) -- (\x,-0.05) node[below]{$\x$};
    }
    \draw (-0.5,0) -- (-0.5,-0.05) node[below]{$-\frac 12$};
    \draw (0.5,0) -- (0.5,-0.05) node[below]{$\frac 12$};

    \draw[dashed,farey,name path=I-1] (-1,0) -- (-1,\myxhigh) node[above]{$\III_{-1}$};
    \draw[dashed,farey] (0,0) -- (0,\myxhigh) node[above]{$\III_0$};
    \draw[dashed,farey,name path=I1] (1,0) -- (1,\myxhigh) node[above]{$\III_1$};
    \draw[thin,farey,name path=J-1] (0,0) arc(0:180:0.5);
    \draw[thin,farey,name path=J0] (1,0) arc(0:180:0.5);
    \node [farey] at (160:1.1) {$\JJJ_{-1}$};
    \node [farey] at (30:1) {$\JJJ_0$};

    \def\et{0.35}
    \def\g{0.8}
    \def\xc{(\g-\et)/2}

    \draw[thick,blue,name path=G1] (-\et,0) arc(180:0:{(\g+\et)/2});
    \node[blue] at (\g,0) {$\bullet$};
    \node[blue] at (-\et,0) {$\bullet$};
    \node[blue,below] at (\g,0) {$\gamma$};
    \node[blue,below] at (-\et,0) {$-\eta$};

    \path [name intersections={of=I0 and G1,by=z}];
    \path [name intersections={of=J0 and G1,by=z'}];
    \node [blue,above left] at (z) {$(z,\zeta)$};
    \draw[blue,-triangle 45,rotate around={90:(z)}] (z) -- ($(z)!0.3cm!({\xc},0)$);
    \node [blue,above] at (z') {$(z',\zeta')$};
    \draw[blue,-triangle 45,rotate around={90:(z')}] (z') -- ($(z')!0.3cm!({\xc},0)$);

    \def\et{0.7}
    \def\g{1.3}
    \def\xc{(\g-\et)/2}

    \draw[thick,Red,name path=G1] (-\et,0) arc(180:0:{(\g+\et)/2});
    \node[Red] at (\g,0) {$\bullet$};
    \node[Red] at (-\et,0) {$\bullet$};
    \node[Red,below] at (\g,0) {$\gamma$};
    \node[Red,below] at (-\et,0) {$-\eta$};

    \path [name intersections={of=I0 and G1,by=z}];
    \path [name intersections={of=I1 and G1,by=z'}];
    \node [Red,above left] at (z) {$(z,\zeta)$};
    \draw[Red,-triangle 45,rotate around={90:(z)}] (z) -- ($(z)!0.3cm!({\xc},0)$);
    \node [Red,right] at (z') {$(z',\zeta')$};
    \draw[Red,-triangle 45,rotate around={90:(z')}] (z') -- ($(z')!0.3cm!({\xc},0)$);

\end{tikzpicture}
    \caption{Action of the first return map $\PPP_g$. The red geodesic
      has $\gamma>1$ and thus the first return to $C$ of the geodesic
      flow occurs on $\III_1$; the blue geodesic has $\gamma<1$, so
      that the first return to $C$ of the geodesic flow occurs on
      $\JJJ_0$.}\label{fig-geodflow}
\end{figure}
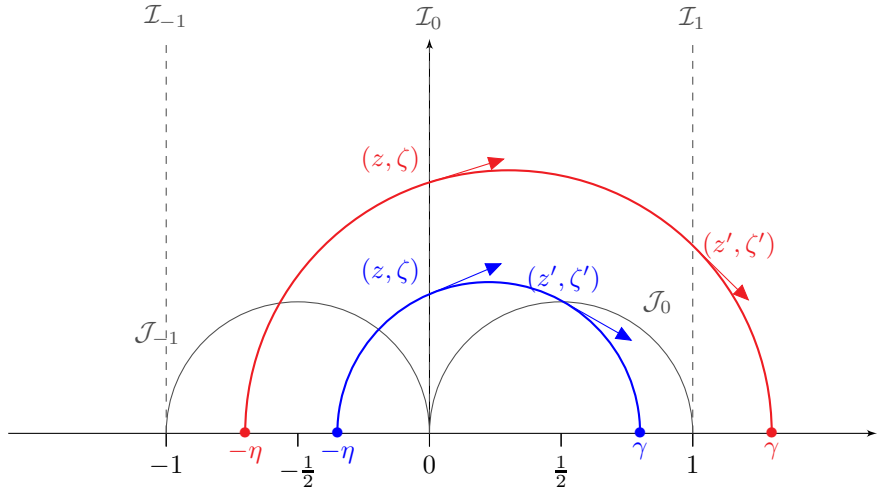

\noindent
Let us now consider the $\{L,R\}$ coding of $\gamma \in \R^+$. Using that $M(\gamma) = R\cdots$ when $\gamma>1$, $M(\gamma) = L\cdots$ when $\gamma<1$, and $M(\gamma) = I$ when $\gamma=1$, we obtain
\[
    M(\gamma') =
    \begin{cases}
        \sigma(M(\gamma)) & \text{if $\gamma\not=1$} \\
        \lambda & \text{if $\gamma=1$}
    \end{cases},
\]
where $\lambda$ denotes the empty word.

\noindent
We can now construct the suspension flow associated to
$(\GG,\PPP_g)$. Let $(z,\zeta)\in \CCC$ and apply the geodesic flow
along $\gamma_{z,\zeta}(t)$ for $t$ positive. Letting
$(\gamma,\eta) := \GGG(z,\zeta)$, the first return of the geodesic
flow $g_t(z,\zeta)$ on $C$ occurs at a time $t_g(z,\zeta)>0$, which
can then be written as a function of $(\gamma, \eta)$ as
$\rho_g(\gamma,\eta) := t_g(\GGG^{-1}(\gamma,\eta))$. Let us now
compute $\rho_g(\gamma,\eta)$. When $\gamma>1$, we have seen that the
first return on $C$ along $\gamma_{z,\zeta}(t)$ for $t$ positive
occurs on $\III_1$. Hence, using the identification between $S\HP$ and
$PSL(2,\R)$ and \eqref{geodesic-sl2r}, we write
\[
\Gamma_{g_t(z,\zeta)} = \begin{pmatrix} a & b\\ c & d \end{pmatrix} \, \begin{pmatrix} e^{\frac t2} & 0 \\ 0 & e^{-\frac t2} \end{pmatrix} \quad \text{where}\, \,  \Gamma_{(z,\zeta)} = \begin{pmatrix} a & b\\ c & d \end{pmatrix},
\]
from which it follows that $t_g(z,\zeta)$ is the positive solution of
\[
    \Real \left( \frac{a\, e^{\frac t2} i + b\, e^{-\frac t2}}{c\, e^{\frac t2} i + d\, e^{-\frac t2}}  \right) = 1\, .
\]
A straightforward computation and \eqref{map-sl2r-gg-inv} give
\[
t_g(z,\zeta) = \frac 12 \, \log \left( \frac{ac+d^2}{ac-c^2} \right) \quad \text{and} \quad \rho_g(\gamma,\eta) = \frac 12 \, \log \left( \frac{1+\frac 1 \eta}{1-\frac 1 \gamma} \right) \, \, \text{if $\gamma>1$.}
\]
When $\gamma<1$, we have seen that the first return on $C$ along
$\gamma_{z,\zeta}(t)$ for $t$ positive occurs on $\JJJ_0$. Hence, as
before it follows that $t_g(z,\zeta)$ is the positive solution of
\[
\left| \left( \frac{a\, e^{\frac t2} i + b\, e^{-\frac t2}}{c\, e^{\frac t2} i + d\, e^{-\frac t2}}  \right) - \frac 12\right|= \frac 12\, .
\]
A straightforward computation and \eqref{map-sl2r-gg-inv} give
\[
t_g(z,\zeta) = \frac 12 \, \log \left( \frac{b^2-bd}{ac-a^2} \right) \quad \text{and} \quad \rho_g(\gamma,\eta) = \frac 12 \, \log \left( \frac{1+\eta}{1-\gamma} \right) \, \, \text{if $\gamma<1$.}
\]
Let now
\[
    \Sigma_\rho := \set{(\gamma,\eta,\tau) \in \R^+\times \R^+\times
      \R\, :\, \gamma \not=1\, ,\, 0\le \tau \le \rho_g(\gamma,\eta)}
\]
and consider the flow $\phi_t : \Sigma_\rho \to \Sigma_\rho$ defined by
\[
    \phi_t (\gamma, \eta, \tau) =
    \begin{cases}
        (\gamma, \eta, \tau+t) & \text{if
          $0\le \tau+t<\rho_g(\gamma,\eta)$}
        \\[0.1cm]
        (\PPP_g(\gamma, \eta), 0) & \text{if
          $\tau+t=\rho_g(\gamma,\eta)$}
    \end{cases},
\]
and analogously for other values of $t$. Then $(\Sigma_\rho,\phi_t)$
is the suspension flow over the map $\PPP_g$, for which the following
proposition holds.

\begin{prop}\label{mis-inv-geodesic}
    The following properties hold.
    \begin{enumerate}
      \item The map $\PPP_g :\GG \to \GG$ preserves the infinite
        measure $\mu$ which is absolutely continuous with respect to
        the Lebesgue measure on $\GG$ with density
        $h(\gamma,\eta)=(\gamma+\eta)^{-2}$.
      \item The flow $\phi_t : \Sigma_\rho \to \Sigma_\rho$ preserves
        the measure $\tilde \mu$ which is absolutely continuous with
        respect to the Lebesgue measure on $\Sigma_\rho$ with density
        $\tilde h(\gamma,\eta,\tau)=(\gamma+\eta)^{-2}$. Moreover
        $\tilde \mu(\Sigma_\rho) = \pi^2/3$.
      \item The dynamical system $(\Sigma_\rho, \tilde \mu, \phi_t)$
        is isomorphic to the system $(S\MMM,\tilde m, \tilde g_t)$,
        where $\tilde m$ is the projection on $S\MMM$ of the Liouville
        measure $dm(x,y,\theta) = y^{-2}\, dx\, dy\, d\theta$ on
        $S\HP$, where $z=x+i y$ and $\theta = \theta(\zeta)$.
    \end{enumerate}
\end{prop}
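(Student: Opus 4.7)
The plan is to mirror the proof of Proposition \ref{mis-inv-horo}, this time for the geodesic flow. The first step is to introduce coordinates $(\gamma,\eta,\tau)$ on $S\HP$ (off a Liouville-null set) adapted to the geodesic flow, where $\gamma$ and $-\eta$ are the forward/backward endpoints $\gamma^\pm_{z,\zeta}$ of the oriented geodesic through $(z,\zeta)$, and $\tau$ is the hyperbolic arclength along the geodesic measured from its Euclidean apex. These coordinates make the geodesic flow act by translation in $\tau$, since \eqref{geodesic-sl2r} together with the uniqueness of the unit-speed parametrization gives $g_t(\gamma,\eta,\tau) = (\gamma,\eta,\tau+t)$.

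Next I would express $(x,y,\theta)$ in terms of $(\gamma,\eta,\tau)$ explicitly, obtaining
\begin{equation*}
x = \frac{\gamma-\eta}{2} + \frac{\gamma+\eta}{2}\tanh\tau,\qquad y = \frac{\gamma+\eta}{2}\operatorname{sech}\tau,
\end{equation*}
with $\theta$ a function of $\tau$ alone (since the angle at a point of a geodesic is intrinsic to the position along that geodesic, independent of which geodesic is chosen), concretely $\cos\theta = -\tanh\tau$, $\sin\theta = -\operatorname{sech}\tau$. The $\tau$-only dependence of $\theta$ simplifies the Jacobian dramatically: its bottom row is $(0,0,\partial\theta/\partial\tau)$, so the determinant reduces to a $2\times 2$ minor, and a direct computation yields $dm = c\,(\gamma+\eta)^{-2}\,d\gamma\,d\eta\,d\tau$ for an explicit constant $c$.

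Parts (ii) and (iii) are then immediate: the measure $d\tilde\mu = (\gamma+\eta)^{-2}\,d\gamma\,d\eta\,d\tau$ on $\Sigma_\rho$ is $\phi_t$-invariant because $dm$ is $g_t$-invariant, which in these coordinates is just translation invariance in $\tau$, and the explicit coordinate change yields the measure-theoretic isomorphism $(\Sigma_\rho,\tilde\mu,\phi_t) \simeq (S\MMM,\tilde m,\tilde g_t)$ modulo the (measure-zero) locus where the first return is not well-defined. The stated value $\tilde\mu(\Sigma_\rho) = \pi^2/3$ is obtained by combining the proportionality constant $c$ with $\tilde m(S\MMM) = 2\pi\cdot\mathrm{Area}(\FFF) = 2\pi^2/3$. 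Part (i) then follows from (ii) by the standard relation between the invariant measure of a suspension flow and that of its base return map, or alternatively by a direct Jacobian calculation on the two pieces $\{\gamma>1\}$ and $\{\gamma<1\}$ using \eqref{map-g} and Lemma \ref{lem:via-matrici-g}. The main technical work lies in the Jacobian computation and in the bookkeeping of the constant $c$ to land on exactly $\pi^2/3$; the conceptual content of the proposition is contained entirely in the observation that $g_t$ acts as translation in the last coordinate.
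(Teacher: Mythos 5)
Your proposal is correct and follows essentially the same route as the paper: you introduce the coordinates $(\gamma,\eta,\tau)$ adapted to the geodesic flow (your formulas $x=\tfrac{\gamma-\eta}{2}+\tfrac{\gamma+\eta}{2}\tanh\tau$, $y=\tfrac{\gamma+\eta}{2}\operatorname{sech}\tau$ and $\theta$ depending on $\tau$ alone agree with the paper's \eqref{coord-ch-xytheta}, where $\theta=-2\arctan(e^\tau)$), compute the Jacobian to find $dm = 2(\gamma+\eta)^{-2}\,d\gamma\,d\eta\,d\tau$, use translation in $\tau$ for invariance and $\tilde m(S\MMM)=2\pi^2/3$ for the total mass, and deduce (i) from (ii). No substantive differences.
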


\begin{proof}
Given $(z,\zeta)\in S\HP$ with $\theta(\zeta)\not\in \set{0,\pi}$, consider its coordinates $(\gamma,\eta,\tau)$ defined by $(\gamma,\eta) = \GGG(z,\zeta)$, and $\tau\in \R$ such that $(z,\zeta) = g_\tau(z',\zeta')$ where $z'$ is the point of the geodesic $\gamma_{z,\zeta}(t)$ with maximum value of the $y$ component. We obtain the following expressions for the coordinates $(x,y,\theta)$ of $(z,\zeta)$, where $z=x+i y$ and $\theta=\theta(\zeta)$:
\begin{equation}\label{coord-ch-xytheta}
x(\gamma,\eta,\tau) = \frac{\gamma\, e^\tau - \eta\, e^{-\tau}}{e^\tau + e^{-\tau}}\, ,\quad y(\gamma,\eta,\tau) = \frac{\gamma+ \eta}{e^\tau + e^{-\tau}}\, ,\quad \theta(\gamma,\eta,\tau) = -2\, \arctan (e^\tau)\, .
\end{equation}
From \eqref{coord-ch-xytheta} it follows that the Liouville measure $dm(x,y,\theta) = y^{-2}\, dx\, dy\, d\theta$ becomes
\[
2\, d\tilde \mu = \frac{2}{(\gamma+\eta)^2}\, d\gamma\, d\eta\, d\tau
\]
in the coordinates $(\gamma,\eta,\tau)$. Moreover by the definition of $\tau$, the geodesic flow acts in the coordinates $(\gamma,\eta,\tau)$ simply by translation on $\tau$, that is
\[
g_t(\gamma,\eta,\tau) = (\gamma,\eta,\tau+t)\, .
\]
These facts, together with $\tilde m(S\MMM) = 2\pi^2/3$, prove (ii)
and (iii). Finally (i) follows from (ii).
\end{proof}

\section{The dynamics of the periodic points of $\PPP_h$ on the permuted Stern-Brocot tree} \label{app:algorithm}

Here we describe the algorithm to find the points of the periodic
orbit of $(1,r,-1)$ for $\PPP_h$ in their dynamical order up to
$(1,r,+1)$ (refer to Figure \ref{fig-motions} where the case
$r\in (10,11)$ is shown):
\begin{enumerate}
  \item[(a)] the initial condition is the point $(1,r,-1)\in \WW$ and for completeness we consider the case $r\in \N$, with $r\ge 2$ (the case $r=1$ has already been studied and for $r\in [1,2)$ no fraction other than $1/1$ in $\hat{\TTT}$ has energy small enough to be in the periodic orbit). If $r\not\in \N$ then the points with $\eps=0$ should be excluded in the next steps. Then we start with the fraction $1/1$;
\item[(b)] go down on the left along $\hat{\TTT}$ finding the fractions $1/n$ for $n\le r$ (here we are applying $L$ to the $\gamma$-components). These fractions correspond to the points $(1/n, r/n^2, -1)\in \WW$ if $2\le n<r$, and to the point $(1/n, 1/n, 0)\in \WW$ if $n=r$. Note that for the energy $\xi(1/r)=r$, and it is an elementary remark that there is only another fraction at the same level in $\hat{\TTT}$ with energy greater than or equal to $r$, namely $r/1$;
\item[(c)] if $r=2$, it is clear that the only other fraction in $\hat{\TTT}$ with energy less than or equal to $r=2$ is $2/1$, hence we obtain the point $(2/1,2,0)$, from which we get $(1,r,+1)$;
\item[(d)] if $r\ge 3$ go up back to the fraction $1/(r-1)$ (applying $L^{-1}$), to find the corresponding point $(1/(r-1), r/(r-1)^2, +1)\in \WW$. The right sister of $1/(r-1)$ is the fraction $(r-1)/(r-2)$, and $\xi((r-1)/(r-2))>r$ for all $r\ge 4$. If $r=3$, the right sister of $1/2$ is $2/1$, we get the point $(2/1,3,-1)\in \WW$, and we continue as in step (i);
\item[(e)] if $r\ge 4$, keep going up back to the fractions $1/n$ with $2\le n<r-1$ (applying $L^{-1}$), to find the corresponding point $(1/n, r/n^2, +1)\in \WW$, and stop going up at $n_*$ if the right sister $n_*/(n_*-1)$ satisfies $\xi(n_*/(n_*-1))=n_*(n_*-1) \le r$. In this case we go to the right (applying $RL^{-1}=SR^{-1}$) to the fraction $n_*/(n_*-1)$, corresponding to the point $(n_*/(n_*-1), r/(n_*-1)^2, -1)\in \WW$ if $n_*(n_*-1) < r$, and to the point $(n_*/(n_*-1), n_*/(n_*-1), 0)\in \WW$ if $n_*(n_*-1) = r$;
\item[(f)] repeat the same algorithm: first go left until possible with points with $\eps=-1$ or $\eps=0$; then go back up with points with $\eps=+1$ until the energy of the right sister of the fraction is small enough to go right with $\eps=-1$; eventually we reach a fraction which is a right descendant (hence it has no right sisters) and in correspondence with a point with $\eps=+1$, from which we cannot go down; then we go up back visiting all the already seen fractions, this time with $\eps=+1$, until we get to the fraction $n_*/(n_*-1)$ of step (e), in correspondence with the point $(n_*/(n_*-1),r/(n_*-1)^2,+1)\in \WW$;
\item[(g)] move up to the fraction $1/(n_*-1)$, the direct parent of $1/n_*$ and $n_*/(n_*-1)$, (applying $R^{-1}$) obtaining the point $(1/(n_*-1),r/(n_*-1)^2,+1)$. The same pattern described in (e) and (f) repeats, that is we move to the right to the fraction $(n_*-1)/(n_*-2)$ with $\eps=-1$, and then apply step (f), until we reach again $(n_*-1)/(n_*-2)$, this time with $\eps=+1$, and then go back to $1/(n_*-2)$;
\item[(h)] repeat step (g) moving up until reaching $1/2$ in correspondence with the point $(1/2,r/4,+1)\in \WW$, then go right to $2/1$ with $\eps=-1$;
\item[(i)] now apply the steps in (b)-(g) by interchanging left with right. This follows from the symmetry of the Stern-Brocot trees, for which a reflection of the tree along a vertical line passing through $1/1$ sends the fraction $p/q$ to the fraction $q/p$. In particular the energy of these specular fractions is the same. Then we visit all the fractions with energy less than or equal to $r$ which lie on this right side of $\hat{\TTT}$, until we reach back $2/1$ in correspondence with the point $(2/1,r,+1)\in \WW$. The last step is to go back to $1/1$ and to $(1,r,+1)$, the last point in the periodic orbit of $(1,r,-1)$.
\end{enumerate}

\newpage
\begin{landscape}
    \begin{figure}[h]
        \begin{tikzpicture}[->,>=stealth',auto,node distance=3cm,
  thick,main node/.style={circle,draw=white,font=\sffamily\Large\bfseries}]

  \def\v{0.9cm}
  \def\h{0.045cm}
  \def\ff{40}
  \def\f{-22}
  \def\q{-50}
  \def\hs{0.15cm}

  \node[main node] (1-) {$(1,-1)$};
  \node[main node] (1+) [right = \hs of 1-] {$(1,+1)$};

  \node[main node] (2l+) [below left = \v and \ff*\h of 1-] {$(\frac 12, +1)$};
  \node[main node] (2r-) [below right= \v and \ff*\h of 1+] {$(\frac 21, -1)$};
  \node[main node] (2l-) [left = \hs of 2l+] {$(\frac 12, -1)$};
  \node[main node] (2r+) [right= \hs of 2r-] {$(\frac 21, -1)$};

  \node[main node] (3ll+) [below left = \v and \f*\h of 2l-] {$(\frac 13, +1)$};
  \node[main node] (3lr-) [below right= \v and \f*\h of 2l+] {$(\frac 32, -1)$};
  \node[main node] (3rl+) [below left = \v and \f*\h of 2r-] {$(\frac 23, +1)$};
  \node[main node] (3rr-) [below right= \v and \f*\h of 2r+] {$(\frac 31, -1)$};
  \node[main node] (3ll-) [left = \hs of 3ll+] {$(\frac 13, -1)$};
  \node[main node] (3lr+) [right= \hs of 3lr-] {$(\frac 32, +1)$};
  \node[main node] (3rl-) [left = \hs of 3rl+] {$(\frac 23, -1)$};
  \node[main node] (3rr+) [right= \hs of 3rr-] {$(\frac 31, +1)$};

  \node[main node] (4lll+) [below left = \v and \q*\h of 3ll-] {$(\frac 14, +1)$};
  \node[main node] (4lrr-) [below right= \v and \q*\h of 3lr+] {$(\frac 52, -1)$};
  \node[main node] (4rll+) [below left = \v and \q*\h of 3rl-] {$(\frac 25, +1)$};
  \node[main node] (4rrr-) [below right= \v and \q*\h of 3rr+] {$(\frac 41, -1)$};
  \node[main node] (4lll-) [left = \hs of 4lll+] {$(\frac 14, -1)$};
  \node[main node] (4lrr+) [right= \hs of 4lrr-] {$(\frac 52, +1)$};
  \node[main node] (4rll-) [left = \hs of 4rll+] {$(\frac 25, -1)$};
  \node[main node] (4rrr+) [right= \hs of 4rrr-] {$(\frac 41, +1)$};

  \node[main node] (5-) [below left = \v and \f*\h of 4lll-] {$\ \ \,\iddots\ \ \,$};
  \node[main node] (5+) [right= \hs of 5-] {$\ \ \,\iddots\ \ \,$};

  \node[main node] (6-) [below left = \v and \f*\h of 5-] {$(\frac 19, -1)$};
  \node[main node] (6+) [right= \hs of 6-] {$(\frac 19, +1)$};

  \node[main node] (7-) [below left = \v and \f*\h of 6-] {$(\frac 1{10}, -1)$};
  \node[main node] (7+) [right= \hs of 7-] {$(\frac 1{10}, +1)$};

  \node[main node] (5x+) [below right = \v and \f*\h of 4rrr+] {$\ \ \,\ddots\ \ \,$};
  \node[main node] (5x-) [left= \hs of 5x+] {$\ \ \,\ddots\ \ \,$};

  \node[main node] (6x+) [below right = \v and \f*\h of 5x+] {$(\frac 91, +1)$};
  \node[main node] (6x-) [left= \hs of 6x+] {$(\frac 91, -1)$};

  \node[main node] (7x+) [below right = \v and \f*\h of 6x+] {$(\frac {10}1, +1)$};
  \node[main node] (7x-) [left= \hs of 7x+] {$(\frac {10}1, -1)$};

  \path[every node/.style={font=\sffamily\small}]
  (1-)  edge[bend right] node {} (2l-)
  (2l-) edge[bend right] node {} (3ll-)
  (3ll-) edge[bend right] node {} (4lll-)
  (4lll-) edge[bend right] node {} (5-)
  (5-) edge[bend right] node {} (6-)
  (6-) edge[bend right] node {} (7-)
  (7-) edge[bend right] node {} (7+)
  (7+) edge[bend right] node {} (6+)
  (6+) edge[bend right] node {} (5+)
  (5+) edge[bend right] node {} (4lll+)
  (4lll+) edge[bend right] node {} (3ll+)
  (3ll+) edge node {} (3lr-)
  (3lr-) edge[bend right] node {} (4lrr-)
  (4lrr-) edge[bend right] node {} (4lrr+)
  (4lrr+) edge[bend right] node {} (3lr+)
  (3lr+) edge[bend right] node {} (2l+)
  (2l+) edge node {} (2r-)
  (2r-) edge[bend right] node {} (3rl-)
  (3rl-) edge[bend right] node {} (4rll-)
  (4rll-) edge node {} (4rll+)
  (4rll+) edge[bend right] node {} (3rl+)
  (3rl+) edge node {} (3rr-)
  (3rr-) edge[bend right] node {} (4rrr-)
  (4rrr-) edge[bend right] node {} (5x-)
  (5x-) edge[bend right] node {} (6x-)
  (6x-) edge[bend right] node {} (7x-)
  (7x-) edge[bend right] node {} (7x+)
  (7x+) edge[bend right] node {} (6x+)
  (6x+) edge[bend right] node {} (5x+)
  (5x+) edge[bend right] node {} (4rrr+)
  (4rrr+)edge[bend right] node {}  (3rr+)
  (3rr+) edge[bend right] node {} (2r+)
  (2r+) edge[bend right] node {} (1+)
  (1+) edge[bend right] node {} (1-)
  ;
\end{tikzpicture}
        \caption{Representation on the permuted Stern-Brocot tree of the
          periodic orbit of $(1,r,-1)$ for $\PPP_h$ in the case
          $r\in (10,11)$.}\label{fig-motions}
    \end{figure}
\end{landscape}


%

\end{document}